
\documentclass[11pt]{article}
\usepackage{hyperref}
\usepackage{amsmath, graphicx, latexsym, amssymb, amsthm, amsfonts}
\usepackage[mathscr]{eucal}
\usepackage{graphics}
\usepackage{color}
\usepackage{epsfig}
\usepackage{enumerate}

\setcounter{MaxMatrixCols}{10}

 \topmargin 0.0truein
 \oddsidemargin 0.0truein
 \evensidemargin 0.0truein
 \textheight 8.5truein
 \textwidth 6.5truein
 \footskip 0.6truein
 \headheight 0.0truein
 \headsep 0.3truein
 \parskip 0.5em

\newtheorem{lemma}{Lemma}[section]
\newtheorem{theorem}[lemma]{Theorem}
\newtheorem{corollary}[lemma]{Corollary}

\newtheorem{proposition}[lemma]{Proposition}

\newtheorem{remark}[lemma]{Remark}
\newtheorem{construction}[lemma]{Construction}

\newcommand{\leb}{\mbox{leb}}

\newcommand{\half}{{\frac{1}{2}}}
\newcommand{\quarter}{{\frac{1}{4}}}

\newcommand{\Nmb}{{\mathbb{N}}}

\newcommand{\Rmb}{{\mathbb{R}}}

\newcommand{\Bmc}{{\mathcal{B}}}
\newcommand{\Cmc}{{\mathcal{C}}}
\newcommand{\Dmc}{{\mathcal{D}}}
\newcommand{\Emc}{{\mathcal{E}}}
\newcommand{\Fmc}{{\mathcal{F}}}

\newcommand{\Jmc}{{\mathcal{J}}}

\newcommand{\Mmc}{{\mathcal{M}}}

\newcommand{\Umc}{{\mathcal{U}}}

\newcommand{\cls}{{\mathcal{S}}}

\newcommand{\one}{{\boldsymbol{1}}}

\newcommand{\fbar}{{\bar{f}}}

\newcommand{\Xbar}{{\bar{X}}}
\newcommand{\xibar}{{\bar{\xi}}}

\newcommand{\Ybar}{{\bar{Y}}}

\newcommand{\Zbar}{{\bar{Z}}}
\newcommand{\zetabar}{{\bar{\zeta}}}

\newcommand{\gtil}{{\tilde{g}}}

\newcommand{\Ntil}{{\tilde{N}}}

\newcommand{\xtil}{{\tilde{x}}}
\newcommand{\Xtil}{{\tilde{X}}}
\newcommand{\xitil}{{\tilde{\xi}}}

\newcommand{\ytil}{{\tilde{y}}}

\newcommand{\zetatil}{{\tilde{\zeta}}}

\newcommand{\la}{\lambda}

\newcommand{\R}{{\mathbb R}}

\newcommand{\calC}{{\cal C}}

\newcommand{\calU}{{\cal U}}

\newcommand{\w}{\wedge}

\newcommand{\be}{\begin{equation}}
\newcommand{\ee}{\end{equation}}

\numberwithin{equation}{section}
\begin{document}

\title{Large Deviations for the Single Server Queue and the Reneging Paradox}
\author{Rami Atar\thanks{%
Viterbi Faculty of Electrical Engineering, Technion} \and Amarjit Budhiraja%
\thanks{%
Department of Statistics and Operations Research, University of North
Carolina} \and Paul Dupuis\thanks{%
Division of Applied Mathematics, Brown University} \and Ruoyu Wu\thanks{%
Department of Mathematics, University of Michigan}}
\maketitle

\begin{abstract}
\noindent For the M/M/1+M model at the law-of-large-numbers scale, the long
run reneging count per unit time does not depend on the individual (i.e.,
per customer) reneging rate. This paradoxical statement has a simple proof.
Less obvious is a large deviations analogue of this fact, stated as follows: 
\emph{The decay rate of the probability that the long run reneging count per
unit time is atypically large or atypically small does not depend on the
individual reneging rate.} In this paper, the sample path large deviations
principle for the model is proved and the rate function is computed. Next,
large time asymptotics for the reneging rate are studied for the case when
the arrival rate exceeds the service rate. The key ingredient is a calculus
of variations analysis of the variational problem associated with atypical
reneging. A characterization of the aforementioned decay rate, given
explicitly in terms of the arrival and service rate parameters of the model,
is provided yielding a precise mathematical description of this paradoxical
behavior. \newline
\ \newline

\noindent

\noindent \textbf{AMS subject classifications:} 60F10, 60J27, 60K25.\newline
\ \newline

\noindent \textbf{Keywords:} Single server queue, reneging, sample path
large deviations, Laplace principle, Euler-Lagrange equations, the reneging
paradox
\end{abstract}

\section{Introduction.}

\label{sec1}

Despite vast interest in recent years in queueing models with reneging and
their asymptotic analysis (see the survey article \cite{ward2012}), large
deviations (LD) treatment of even the simplest queueing model accounting for
reneging, namely the M/M/1+M, is lacking. This paper addresses two LD
aspects of this model: a sample path large deviations principle (LDP), and
the decay rate of the probability that the long run reneging count per unit
time is atypically large or atypically small when the arrival rate exceeds
the service rate. Theorem \ref{thm:I*} gives an explicit formula for the
aforementioned decay rate which shows in particular that the decay rate does
not depend on the parameter governing the individual (or per customer)
reneging rate.
An additional fact that arises from the analysis is that under the optimal change of measure associated with this atypical behavior, the number of reneging events over a large time interval, normalized by the cumulative time customers spend in the queue (summed over these customers), does not change w.r.t. to its law of large numbers (LLN) value. As a result, the LD cost associated with reneging vanishes.
These two phenomena, that are related to one
another, are called in this paper \textit{the reneging paradox at the LD
scale}.

The model under consideration is of a single server queue with reneging, in
which the interarrival times, service times and patience are exponentially
distributed. With $n\in{\mathbb{N}}$ as a scaling parameter, the arrival
rate in the $n$th system is given by $\lambda n$, the service rate by $\mu n$%
, and the per-customer reneging rate is given by $\theta$. That is, the
patience of each customer is an exponential random variable with parameter $%
\theta$. This scaling, in which arrival and service times are accelerated
but patience remains constant, is common in the literature on scaling limits
of queueing systems with reneging (as, for example, in \cite{atakasshi}, 
\cite{kanram10}). The first main result of this paper is Theorem \ref%
{th:thm1} which gives the sample path LDP for the process consisting of the
pair: normalized queue length, normalized reneging count, where
normalization refers to dividing by $n$. The technique for establishing this
is based on describing the state dynamics by Poisson random measures (PRM)
and proceeds by proving the Laplace upper and lower bounds using a general
variational representation for expectations of functionals of PRM from \cite%
{BudhirajaDupuisMaroulas2011variational}.

Simple considerations based on the balance equation identify the reneging
rate in equilibrium at the LLN scale. That is, the
number of arrivals over a time window is given by the number of departures
plus the number of reneging customers over this window with a correction
term that accounts for changes in the queue length. In steady state, this
correction term converges in probability to zero at the scaling limit.
Moreover, if $\lambda>\mu$ then in steady state the queue rarely becomes
empty in this asymptotic regime, and consequently the departure rate is well
approximated by $n\mu$, yielding reneging rate $\lambda n-\mu n$, with $o(n)$
correction. For the normalized processes, defined by division by $n$, this
rate is given by $\lambda-\mu$. On the other hand, if $\lambda\le\mu$ then
it is not hard to see that the reneging rate is $O(1)$, and therefore its
normalized version is asymptotic to zero. The lack of dependence of the
reneging rate on the parameter $\theta$ is also suggested by the time
asymptotic behavior of the fluid limit equations given in %
\eqref{eq:fluidlimit}. This gives a paradoxical behavior (that one may call 
\textit{the reneging paradox at the LLN scale}) that the overall reneging
rate is asymptotically independent of the per-customer reneging rate. It is
not hard to establish this property rigorously.
{
A control theoretic version of this phenomenon has been observed earlier
in \cite{puh19}, where a problem of minimizing
the long run average reneging cost of a fluid model was studied.
An optimal control policy was calculated, and it was
noticed that this policy is independent of the reneging distribution
(see \cite[Section 6]{puh19}).
}

In this work we establish a similar property at the large deviations scale.
Specifically, we are concerned with estimating the probabilities of
atypically large or small reneging count over a long time interval. 
We note that the reneging count is an important performance measure for queuing systems and probability of non-typical reneging counts, just as 
the buffer overflow probabilities for finite buffer queuing systems, are natural to analyze using a large deviation scaling.
If $%
R^{n}(t)$ denotes the reneging count at time $t$ in the $n$th system, and $%
\bar{R}^{n}(t)=n^{-1}R^{n}(t)$ denotes its normalized version, then by the
discussion above, $\bar{R}^{n}(t)$ converges in probability to $\gamma _{0}t$ as $%
n\rightarrow \infty $, where $\gamma _{0}=(\lambda -\mu )^{+}$. Then the
quantities of interest are, for $\gamma >\gamma _{0}$, 
\begin{equation*}
\chi ^{+}(\gamma )=\limsup_{t\rightarrow \infty }\limsup_{n\rightarrow
\infty }\frac{1}{t}\frac{1}{n}\log P(\bar{R}^{n}(t)\geq \gamma t),\qquad
\chi ^{-}(\gamma )=\liminf_{t\rightarrow \infty }\liminf_{n\rightarrow
\infty }\frac{1}{t}\frac{1}{n}\log P(\bar{R}^{n}(t)>\gamma t),
\end{equation*}%
and for $\gamma \in (0,\gamma _{0})$ (in the case $\gamma _{0}>0$), 
\begin{equation*}
\chi ^{+}(\gamma )=\limsup_{t\rightarrow \infty }\limsup_{n\rightarrow
\infty }\frac{1}{t}\frac{1}{n}\log P(\bar{R}^{n}(t)\leq \gamma t),\qquad
\chi ^{-}(\gamma )=\liminf_{t\rightarrow \infty }\liminf_{n\rightarrow
\infty }\frac{1}{t}\frac{1}{n}\log P(\bar{R}^{n}(t)<\gamma t).
\end{equation*}%
Under the assumption $\lambda \geq \mu $, Theorem \ref{thm:I*} (see also
Remark \ref{rem:limsupinf}) shows that $\chi ^{+}(\gamma )=\chi ^{-}(\gamma
) $ and provides a formula for this quantity as an explicit function of $%
\lambda ,\mu $ and $\gamma $. The tools are those of the calculus of
variations. That is, the variational formula provided by the LDP for the
large $n$ asymptotics is analyzed for each $t$ via the Euler-Lagrange
equations. This analysis gives an expression for the minimizing trajectories
in this variational problem, for fixed $t$ that is sufficiently large, which
is explicit, except that it involves one scalar parameter $A$. This
parameter is characterized as the solution of a certain nonlinear equation
(see \eqref{eq:A-def} and Lemma \ref{lem:minimizer-existence}) which for a
fixed $t$ does not admit a simple form solution. We study properties of this
parameter as a function of the initial condition and the time horizon as
this time horizon approaches infinity. Using these properties we then
analyze the scaled optimal cost in the variational problem as $t\rightarrow
\infty $ and obtain a simple form expression for the limit as a function of $%
\lambda ,\mu $ and $\gamma $. Finally it is argued using several nice
properties of the rate function that this limit quantity equals $\chi
^{+}(\gamma )$ and $\chi ^{-}(\gamma )$. In this work we do not consider the
case $\lambda <\mu $. The relevant calculus of variations problem for this
case is less tractable and its study will be taken up elsewhere.

We now present a heuristic to justify the formula for the decay rate 
as well as the two aspects of the reneging paradox at the LD scale. Consider
for specificity, the event that the normalized reneging count over an
interval of time $T$ exceeds $\gamma T$, where $\gamma>\gamma_0$. As is well
known, a Poisson process with rate $\alpha$ satisfies a sample path LDP in $%
\mathcal{D}([0,T]:{\mathbb{R}}_+)$, with rate function given by $%
I(\varphi)=\int_0^T \alpha\ell(\dot\varphi(s)/\alpha)ds$ for absolutely
continuous functions $\varphi:[0,T]\to{\mathbb{R}}_+$, and $%
I(\varphi)=\infty $ all other elements of $\mathcal{D}([0,T]:{\mathbb{R}}_+)$. Here, $\mathcal{D}([0,T]:{\mathbb{R}}_+)$ is 
the space of right continuous functions with left limits from $[0,T]$ to ${\mathbb{R}}_+$ equipped with the Skorohod topology, and $\ell(u)=u\log u-u+1$ for $u>0$, $\ell(0)=1$ and $\ell(u)=\infty$
for $u<0$. The function $\ell$ attains its minimum value $0$ uniquely at $1$. In view of this, and using balance equation considerations at equilibrium
similar to those described for the LLN analysis, one may conjecture the
following. The decay rate of the probability of this event as $n\to\infty$
is given by 
\begin{equation}  \label{102}
-T\inf\{\lambda\ell(\lambda^*/\lambda)+\mu\ell(\mu^*/\mu)+\theta
y\ell(\theta^*/\theta)\},
\end{equation}
with an $o(T)$ correction, where the infimum is over $\lambda^*,\mu^*,%
\theta^*$ and $y$ in $(0,\infty)$ satisfying 
\begin{equation*}
\lambda^* = \mu^* +\theta^* y, \qquad \theta^* y = \gamma.
\end{equation*}
Moreover, the minimizing $(\lambda^*,\mu^*,\theta^*)$ correspond to the
parameters of the optimal change of measure in the LD analysis. To solve
this optimization problem, note that the first constraint can be written as $%
\lambda^*=\mu^*+\gamma$, which decouples the problem into two optimization
problems, one associated with $(\lambda^*,\mu^*)$, another with $%
(\theta^*,y)$. Clearly, the latter is solved by $\theta^*=\theta$, which
makes the last term vanish. This heuristic supports the statement that no
change of measure is associated with the reneging process under the optimal
change of measure governing the event of interest, as well as the fact that
the solution is independent of $\theta$. In addition, it can be checked that
the solution of \eqref{102} is $-TC(\gamma)$ where $C(\gamma)$ is defined in %
\eqref{cgamma}. The above heuristic is made rigorous in Theorem \ref{thm:I*}.

Whereas this work focuses on a single server model, very similar results can
be established for the multi-server queue with reneging, such as in a
setting where the number of servers grows linearly with the scaling
parameter $n$, and each server operates with standard exponential service
time distribution (namely, the M/M/$n$+M system). In Section \ref%
{sec:multiser} we present and discuss these results, however the proofs are
omitted. The specific scaling used in this setting has been referred to as a 
\textit{many-server} scaling. This model and scaling were studied for their
LLN and CLT asymptotics in \cite{kasram11}, \cite{kasram13}, \cite{reed09}
far beyond the exponential setting. 

Our main motivation for this work stems from an approach to obtain robust
probability estimates at the LD scale. According to this approach, a family
of probabilistic models is specified in terms of R\'enyi divergence w.r.t.\
a single reference model (that is often easier to analyze). The approach
then provides a tool by which LD estimates on the reference model can be
transformed into LD estimates on the whole family. The model G/G/$n$+G is a
particularly good test case for this approach because (a) the model is hard,
and (b) robustness w.r.t.\ underlying distributions is important in
applications where it is often used, such as in models for call centers. The
estimates obtained in this paper for the Markovian model can be used by this
approach to obtain LD estimates for suitable families of G/G/$n$+G in terms
of the results of this paper concerning M/M/$n$+M. This progress will be
reported in \cite{ABDW2}.

\subsection{Notation.}

Let $\mathcal{D}([0,T]: {\mathbb{R}}^d)$ (resp.\ $\mathcal{C}([0,T]: {\mathbb{%
R}}^d)$) denote the space of right continuous functions with left limits
(resp.\ continuous functions) from $[0,T]$ to ${\mathbb{R}}^d$, equipped with
the usual Skorohod (resp.\ uniform) topology. Define $|x|_{*,T} \doteq
\sup_{0 \le t \le T} |x(t)|$ for $x \in {\mathcal{D}}([0,T]:{\mathbb{R}}^d)$%
. For a Polish space $\mathcal{E}$, we denote by $C_b(\mathcal{E})$ the
space of real continuous and bounded functions on $\mathcal{E}$ and by $\mathcal{P}(%
\mathcal{E})$ the space of probability measures on $\mathcal{E}$ which is
equipped with the topology of weak convergence. We say a collection of $%
\mathcal{E}$ valued random variables is tight if the corresponding family of
probability laws of the random variables is relatively compact in the space $%
\mathcal{P}(\mathcal{E})$. 
{A function $I:\mathcal{E} \to [0, \infty]$ will be called a rate function if for every $m<\infty$, 
the set $\{x: I(x)\le m\}$ is compact. Some authors also refer to such a function as a `good rate function', however here we will drop the adjective `good'.}
A tight sequence of $\mathcal{D}([0,T]: {\mathbb{R%
}}^d)$ valued random variables is said to be ${\mathcal{C}}$-tight if the
limit of every weakly convergent subsequence takes values in $\mathcal{C}%
([0,T]: {\mathbb{R}}^d)$ a.s. We will use $\kappa$, $\kappa_1$, $\kappa_2$, $%
\dotsc$ to denote constants in the proof.

\section{Large Deviations for $M/M/1$ with Reneging.}

\label{sec:setting} We begin by describing the evolution of the scaled state
process. For this it will be convenient to represent the jumps in the system
through certain Poisson random measures, which are introduced below.

For a locally compact Polish space $\mathbb{S}$, let $\mathcal{M}_{FC}(%
\mathbb{S})$ be the space of all measures $\nu$ on $(\mathbb{S},\mathcal{B}(%
\mathbb{S}))$ such that $\nu(K)<\infty$ for every compact $K \subset \mathbb{%
S}$. We equip $\mathcal{M}_{FC}(\mathbb{S})$ with the usual vague topology.
This topology can be metrized such that $\mathcal{M}_{FC}(\mathbb{S})$ is a
Polish space (see \cite{BudhirajaDupuisMaroulas2011variational} for one
convenient metric). A Poisson random measure (PRM) $N$ on a locally compact
Polish space $\mathbb{S}$ with mean measure (or intensity measure) $\nu \in 
\mathcal{M}_{FC}(\mathbb{S})$ is an $\mathcal{M}_{FC}(\mathbb{S})$ valued
random variable such that for each $A \in \mathcal{B}(\mathbb{S})$ with $%
\nu(A)<\infty$, $N(A)$ is Poisson distributed with mean $\nu(A)$ and for
disjoint $A_1,\dotsc,A_k \in \mathcal{B}(\mathbb{S})$, $N(A_1),\dotsc,N(A_k)$
are mutually independent random variables (cf.\ \cite{IkedaWatanabe1990SDE}).

Fix $T\in (0,\infty )$. Let $(\Omega ,\mathcal{F},{P})$ be a complete
probability space on which we are given three mutually independent PRM $%
N_{1},N_{2},N_{3}$ on $[0,T]\times {\mathbb{R}}_{+}$, $[0,T]\times {\mathbb{R%
}}_{+}$ and $[0,T]\times {\mathbb{R}}_{+}^{2}$ respectively with intensities 
$\lambda \,ds\times dy$, $\mu \,ds\times dy$ and $\theta \,ds\times dy\times
dz$ respectively. Let ${\mathbb{X}}_{1}\doteq {\mathbb{R}}_{+}$, ${\mathbb{X}%
}_{2}\doteq {\mathbb{R}}_{+}$, ${\mathbb{X}}_{3}\doteq {\mathbb{R}}_{+}^{2}$%
. Define the filtration 
\begin{equation*}
\hat{\mathcal{F}}_{t}\doteq \sigma \{N_{i}((0,s]\times A_{i}),s\in \lbrack
0,t],A_{i}\in \mathcal{B}({\mathbb{X}}_{i}),i=1,2,3\},\:t\geq 0
\end{equation*}%
and let $\{\mathcal{F}_{t}\}$ be the ${P}$-augmentation of this filtration.
Let $\mathcal{\bar{P}}$ be the $\{\mathcal{F}_{t}\}_{0\leq t\leq T}$%
-predictable $\sigma $-field on $\Omega \times \lbrack 0,T]$. Denote by $%
\bar{\mathcal{A}}_{1},\bar{\mathcal{A}}_{3}$ the class of all measurable
maps from $(\Omega \times \lbrack 0,T],\mathcal{\bar{P}})$, $(\Omega \times
\lbrack 0,T]\times {\mathbb{R}}_{+},\mathcal{\bar{P}}\times \mathcal{B}(%
\mathbb{R}_{+}))$ to $(\mathbb{R}_{+},\mathcal{B}(\mathbb{R}_{+}))$,
respectively. Let $\bar{\mathcal{A}}_{2}\doteq \bar{\mathcal{A}}_{1}$.

For $\varphi \in \bar{\mathcal{A}}_i$, define {counting processes $N_1^{\varphi}$, $N_2^{\varphi}$, $N_3^{\varphi}$ on $[0,T]$, $[0,T]$, $[0,T]\times{\mathbb{R}}_+$, respectively,} by 
\begin{align*}
N_i^{\varphi}([0,t]) & \doteq \int_{[0,t]\times \mathbb{R}_{+}}{1}%
_{[0,\varphi(s)]}(y)\,N_i(ds\,dy),\:t\in [0,T], i=1,2, \\
N_i^{\varphi}([0,t]\times A) & \doteq \int_{[0,t]\times A\times \mathbb{R}%
_{+}}{1}_{[0,\varphi (s,y)]}(z)\,N_i(ds\,dy\,dz),\:t\in [0,T],A\in \mathcal{B%
}({\mathbb{R}}_+), i=3.
\end{align*}
We think of $N_i^{\varphi }$ as a controlled random measure, where $\varphi$
is the control process that produces a thinning of the point process $N_i$
in a random but non-anticipative manner to produce a desired intensity. We
will write $N_i^{\varphi}$ as $N_i^{m}$ if $\varphi \equiv m$ for some
constant $m \in \mathbb{R}_{+}$. Note that $N_1^m,N_2^m,N_3^m$ are PRM on $%
[0,T],[0,T],[0,T]\times {\mathbb{R}}_+$ with intensity $m\lambda \, ds$, $%
m\mu \, ds$, $m\theta \, ds \times dy$ respectively.

\subsection{State dynamics using Poisson random measures.}

{
The process representing the number of customers in the $n$th system
will be referred to as the {\it number in system} process, denoted by
$U^n(t)$. Note that the queue length process and the number of customers
in service process (the latter being $\{0,1\}$-valued) can be expressed
as $(U^n(t)-1)^+$ and $U^n(t)\w1$, respectively.
Denote the total reneging by time  $t$ by $V^{n}(t)$.
Let rescaled versions of these processes be defined by
\begin{equation*}
X^{n}(t)=\frac{U^{n}(t)}{n},\;Y^{n}(t)=\frac{V^{n}(t)}{n},\;t\in \lbrack
0,T].
\end{equation*}%
}
We take $(X^{n}(0),Y^{n}(0))=(x_{n},0)$ with $x_{n}\rightarrow x_{0}$ as $%
n\rightarrow \infty $. We will establish a LDP for $(X^{n},Y^{n})$ in $%
\mathcal{D}([0,T]:{\mathbb{R}}_{+}^{2})$ and then deduce the LDP for $%
Y^{n}(T)$ using the contraction principle.

Using the PRMs introduced above, the state evolution can be written as 
\begin{align*}
X^{n}(t)& =x_{n}+\frac{1}{n}\int_{[0,t]}N_{1}^{n}(ds)-\frac{1}{n}\int_{[0,t]}%
{\boldsymbol{1}}_{\{X^{n}(s-)\neq 0\}}\,N_{2}^{n}(ds) \\
& \quad -\frac{1}{n}\int_{[0,t]\times {\mathbb{R}}_{+}}{\boldsymbol{1}}%
_{\left[0,\left( X^{n}(s-)-\frac{1}{n}\right) ^{+}\right]}(y)\,N_{3}^{n}(ds\,dy). \\
Y^{n}(t)& =\frac{1}{n}\int_{[0,t]\times {\mathbb{R}}_{+}}{\boldsymbol{1}}%
_{\left[0,\left( X^{n}(s-)-\frac{1}{n}\right) ^{+}\right]}(y)\,N_{3}^{n}(ds\,dy).
\end{align*}%
{
The role of the indicator of $\{X^n(s-)\ne0\}$ in the second integral on the RHS
is to express the fact that departures occur only when the number in system
is non-zero. Moreover, notice that the expression $(X^n-\frac{1}{n})^+$
gives the normalized queue length. Hence to model reneging according
to exponential clocks for each customer in the queue,
the third integral expresses
reneging that occurs at a rate proportional to the (normalized) queue length.
}

Define the map $\Gamma :\mathcal{D}([0,T]:{\mathbb{R}})\rightarrow \mathcal{D%
}([0,T]:{\mathbb{R}}_{+})$ by 
\begin{equation}
\Gamma (\psi )(t)\doteq \psi (t)-\inf_{0\leq s\leq t}[\psi (s)\wedge
0],\quad t\in \lbrack 0,T],\psi \in \mathcal{D}([0,T]:{\mathbb{R}}).
\label{eq:SM}
\end{equation}%
The map $\Gamma $ is usually referred to as the one-dimensional Skorohod map
(see, e.g., \cite[Section 3.6.C]{KaratzasShreve1991brownian}).
{Note that the evolution of $X^n$ can be written as
\begin{align*}
X^{n}(t)& =x_{n}+\frac{1}{n}\int_{[0,t]}N_{1}^{n}(ds)-\frac{1}{n}\int_{[0,t]}%
\,N_{2}^{n}(ds) \\
& \quad -\frac{1}{n}\int_{[0,t]\times {\mathbb{R}}_{+}}{\boldsymbol{1}}%
_{\left[0,\left( X^{n}(s-)-\frac{1}{n}\right) ^{+}\right]}(y)\,N_{3}^{n}(ds\,dy) + \eta^n(t), 
\end{align*}
where $\eta^n(t) \doteq \frac{1}{n}\int_{[0,t]}%
{\boldsymbol{1}}_{\{X^{n}(s-) = 0\}}\,N_{2}^{n}(ds)$ increases only when $X^n(t-) = X^n(t)=0$.
Then using a well known characterization of the  Skorohod map (cf. \cite{dupish})
one can write the evolution of $X^{n}$ as:}
\begin{equation}\label{eq:rev1}
X^{n}(t)=\Gamma \left( x_{n}+\frac{1}{n}\int_{[0,\cdot ]}N_{1}^{n}(ds)-\frac{%
1}{n}\int_{[0,\cdot ]}N_{2}^{n}(ds)-\frac{1}{n}\int_{[0,\cdot ]\times {%
\mathbb{R}}_{+}}{\boldsymbol{1}}_{\left[0,\left( X^{n}(s-)-\frac{1}{n}\right) ^{+}\right]}(y)\,N_{3}^{n}(ds\,dy)%
\right) (t).
\end{equation}%
As $n\rightarrow \infty $, $(X^{n},Y^{n})$ converges in $\mathcal{D}([0,T]:{%
\mathbb{R}}_{+}^{2})$, in probability, to $(x,y)$ given by 
\begin{equation}
\begin{aligned} x(t) &= \Gamma \left(x_0 + (\lambda-\mu)\iota(\cdot) -
\theta \int_0^{\cdot} x(s)\,ds\right)(t), \; t \in [0,T], \\ y(t) &= \theta
\int_0^t x(s)\,ds, \; t \in [0,T], \end{aligned}  \label{eq:fluidlimit}
\end{equation}%
where $\iota $ is the identity map on $[0,T]$. Theorem \ref{th:thm1} will
establish a large deviation principle for $(X^{n},Y^{n})$ as $n\rightarrow
\infty $. We begin by introducing the associated rate function.

\subsection{Rate function and the large deviation principle.}\label{sec22}

For $(\xi, \zeta) \in \mathcal{C}([0,T]: {\mathbb{R}}_+^2)$, let $\mathcal{U}%
(\xi, \zeta)$ be the collection of all $\varphi = (\varphi_1, \varphi_2,
\varphi_3)$ such that $\varphi_i: [0,T] \to {\mathbb{R}}_+$, $i=1,2,3$ are
measurable maps and the following equations are satisfied for all $t \in
[0,T]$: 
\begin{equation}  \label{eq:eqxizet}
\begin{aligned} \xi(t) &= \Gamma\left (x_0 + \lambda\int_0^{\cdot}
\varphi_1(s)\,ds - \mu \int_0^{\cdot} \varphi_2(s)\,ds - \theta
\int_0^{\cdot} \varphi_3(s) \xi(s)\,ds\right)(t), \\ \zeta(t) &= \theta
\int_0^t \varphi_3(s)\xi(s)\,ds. \end{aligned}
\end{equation}
Define 
\begin{equation}  \label{eq:ell}
\ell(x) \doteq x \log x -x +1, \quad x \ge 0.
\end{equation}
For $(\xi, \zeta) \in \mathcal{C}([0,T]: {\mathbb{R}}_+^2)$, define 
\begin{equation}  \label{eq:I_T}
I_T(\xi, \zeta) \doteq \inf_{\varphi \in \mathcal{U}(\xi, \zeta)}\left\{
\lambda\int_0^T \ell(\varphi_1(s))\,ds + \mu\int_0^T \ell(\varphi_2(s))\,ds
+ \theta\int_0^T \xi(s)\ell(\varphi_3(s))\,ds\right\}.
\end{equation}
We set $I_T(\xi, \zeta)$ to be $\infty$ if $\mathcal{U}(\xi, \zeta)$ is
empty or $(\xi, \zeta) \in \mathcal{D}([0,T]: {\mathbb{R}}_+^2)\setminus 
\mathcal{C}([0,T]: {\mathbb{R}}_+^2)$.

\begin{theorem}
\label{th:thm1} The pair $\{(X^n, Y^n)\}$ satisfies a LDP on $\mathcal{D}%
([0,T]: {\mathbb{R}}_+^2)$ with rate function $I_T$.
\end{theorem}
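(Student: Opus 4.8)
The plan is to use the weak convergence / variational approach to large deviations, exploiting the representation of $(X^n,Y^n)$ as functionals of the Poisson random measures $N_1,N_2,N_3$ together with the variational representation for exponential functionals of PRM from \cite{BudhirajaDupuisMaroulas2011variational}. First I would recall that representation: for a bounded measurable $F$, $-\frac{1}{n}\log E e^{-n F(N_1^n,N_2^n,N_3^n)}$ equals the infimum over controls $\psi=(\psi_1,\psi_2,\psi_3)\in\bar{\mathcal A}_1\times\bar{\mathcal A}_2\times\bar{\mathcal A}_3$ of
\[
E\left[\lambda\int_0^T\ell(\psi_1(s))\,ds+\mu\int_0^T\ell(\psi_2(s))\,ds+\theta\int_0^T\int_{\mathbb R_+}\ell(\psi_3(s,y))\,dy\,ds+F(\bar N_1,\bar N_2,\bar N_3)\right],
\]
where $\bar N_i$ are the controlled measures driven by $\psi_i$. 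By the standard equivalence between the Laplace principle and the LDP on a Polish space, and since $I_T$ is manifestly nonnegative with compact sublevel sets (a point I would verify separately, using that $\ell$ has superlinear growth and that $\mathcal U(\xi,\zeta)$ forces equicontinuity of $(\xi,\zeta)$), it suffices to prove the Laplace upper and lower bounds for every bounded continuous $G$ on $\mathcal D([0,T]:\mathbb R_+^2)$.

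For the \textbf{Laplace upper bound} I would take near-optimal controls $\psi^n$ in the representation for $F=G$, establish tightness of the occupation measures $\{\bar\rho^n_i\}$ built from $\psi^n_i$ (viewed as random measures on $[0,T]\times\mathbb R_+$ for $i=1,2,3$, with a cost-based uniform-integrability bound coming from a bound on the cost along near-minimizers), pass to a weak limit along a subsequence, and identify the limit of the controlled state $(\bar X^n,\bar Y^n)$ as some $(\xi,\zeta)$ solving \eqref{eq:eqxizet} with a limiting control $\varphi$ obtained by disintegrating the limiting occupation measures. Lower semicontinuity of $\ell$ (Fatou / Jensen) then yields $\liminf_n(\text{cost}^n)\ge \lambda\int\ell(\varphi_1)+\mu\int\ell(\varphi_2)+\theta\int\xi\,\ell(\varphi_3)\ge I_T(\xi,\zeta)$, and continuity of $G$ gives the bound $\liminf_n -\frac1n\log Ee^{-nG(X^n,Y^n)}\ge\inf_{(\xi,\zeta)}\{G(\xi,\zeta)+I_T(\xi,\zeta)\}$. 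For the \textbf{Laplace lower bound} I would fix $(\xi,\zeta)$ with $I_T(\xi,\zeta)<\infty$ and a near-optimal $\varphi\in\mathcal U(\xi,\zeta)$, mollify/truncate it to get a bounded control (controlling the cost increase), plug this deterministic control into the representation, and show that the resulting controlled state converges in probability to $(\xi,\zeta)$ — here one uses the Skorohod map $\Gamma$ is Lipschitz in the uniform topology together with a Gronwall argument to close the loop in \eqref{eq:eqxizet} and standard PRM concentration for the controlled measures — so that $\limsup_n -\frac1n\log Ee^{-nG(X^n,Y^n)}\le G(\xi,\zeta)+I_T(\xi,\zeta)$, then optimize over $(\xi,\zeta)$.

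The \textbf{main obstacle} is the coupling in the reneging/service terms: the integrand ${\boldsymbol 1}_{[0,(X^n(s-)-1/n)^+]}(y)$ (and its limiting analogue $\varphi_3(s)\xi(s)$) makes the controlled dynamics state-dependent, so the occupation-measure limit and the state limit must be identified \emph{simultaneously} rather than one before the other. Concretely, in the upper bound one must show that the thinning level $(\bar X^n(s-)-1/n)^+$ converges along with the control, so that the limiting reneging rate is exactly $\theta\varphi_3(s)\xi(s)$ with the \emph{same} $\xi$ that appears on the left; this requires a careful uniform (in time) estimate, using tightness of $\bar X^n$ in $\mathcal D$, the Lipschitz property of $\Gamma$, and a Gronwall/fixed-point argument to rule out inconsistencies, together with care that the truncation/regularization in the lower bound respects the $\xi(s)$-weighting in the cost $\theta\int\xi\ell(\varphi_3)$ (so one should regularize $\varphi_3$ only where $\xi>0$ and not perturb the state more than $\varepsilon$). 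Once this simultaneous identification is handled, the remaining estimates — compactness of sublevel sets of $I_T$, continuity arguments, and the PRM law-of-large-numbers for controlled measures — are routine.
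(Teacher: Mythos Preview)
Your plan is correct and matches the paper's approach essentially step for step: the paper proves the Laplace upper and lower bounds via the PRM variational representation, establishes tightness and identification of controlled limits for the upper bound (its Lemma~\ref{lem:tightness}), and for the lower bound truncates a near-optimal deterministic control and uses a uniqueness result (its Proposition~\ref{prop:uniq}, proved by an energy/Gronwall-type argument exploiting monotonicity of the reflection) to identify the limit. The one device you leave implicit is that the variational representation naturally produces a control $\psi_3(s,y)$ and a cost $\theta\int\!\!\int \ell(\psi_3(s,y))\,dy\,ds$, whereas $I_T$ involves $\theta\int\xi(s)\ell(\varphi_3(s))\,ds$; the paper bridges this with an intermediate rate function $\tilde I_T$ and shows $\tilde I_T=I_T$ via the Jensen step you allude to.
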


Using the contraction principle { (cf. \cite[Theorem 1.3.2]{dupell4})} one has the following result. Let, for $%
\gamma \in {\mathbb{R}}_{+}$, $\mathcal{U}^{\ast }(\gamma )$ be the
collection of all $(\xi ,\zeta )\in \mathcal{C}([0,T]:{\mathbb{R}}_{+}^{2})$
such that $\zeta (T)=\gamma $. %
%
%
Let $I_{T}^{\ast }:{\mathbb{R}}_{+}\rightarrow \lbrack 0,\infty ]$ be
defined as 
\begin{equation}
I_{T}^{\ast }(\gamma )\doteq \inf_{(\xi ,\zeta )\in \mathcal{U}^{\ast
}(\gamma )}I_{T}(\xi ,\zeta ).  \label{eq:istart}
\end{equation}

\begin{theorem}
\label{thm:contra} $\{Y^n(T)\}$ satisfies a LDP on ${\mathbb{R}}_+$ with
rate function $I^*_T$.
\end{theorem}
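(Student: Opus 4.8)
The plan is to derive Theorem \ref{thm:contra} from Theorem \ref{th:thm1} by an application of the contraction principle to the map
$\pi \colon \mathcal{D}([0,T]:{\mathbb{R}}_+^2) \to {\mathbb{R}}_+$ defined by $\pi(\xi,\zeta) \doteq \zeta(T)$. First I would observe that $\pi$ is continuous: evaluation at the fixed time $T$ is continuous on $\mathcal{D}([0,T]:{\mathbb{R}}_+^2)$ with the Skorohod topology when restricted to paths that are continuous at $T$, and more to the point, on the subset where $I_T < \infty$ the second coordinate $\zeta$ is automatically continuous (indeed $\zeta(t) = \theta\int_0^t \varphi_3(s)\xi(s)\,ds$ for some $\varphi$, so it is absolutely continuous), so there is no subtlety at the jump points. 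Since $\{(X^n,Y^n)\}$ satisfies the LDP on $\mathcal{D}([0,T]:{\mathbb{R}}_+^2)$ with good rate function $I_T$ by Theorem \ref{th:thm1}, the contraction principle yields that $\{Y^n(T)\} = \{\pi(X^n,Y^n)\}$ satisfies a LDP on ${\mathbb{R}}_+$ with rate function
\begin{equation*}
\gamma \mapsto \inf\{ I_T(\xi,\zeta) : (\xi,\zeta)\in\mathcal{D}([0,T]:{\mathbb{R}}_+^2),\ \zeta(T) = \gamma\}.
\end{equation*}

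Next I would identify this inf with $I_T^\ast(\gamma)$ as defined in \eqref{eq:istart}. The only gap is that \eqref{eq:istart} takes the infimum over $(\xi,\zeta)\in\mathcal{U}^\ast(\gamma)\subset\mathcal{C}([0,T]:{\mathbb{R}}_+^2)$ with $\zeta(T)=\gamma$, whereas the contraction principle a priori ranges over all of $\mathcal{D}([0,T]:{\mathbb{R}}_+^2)$ with $\zeta(T)=\gamma$. But by the definition of $I_T$ just below \eqref{eq:I_T}, $I_T(\xi,\zeta) = \infty$ whenever $(\xi,\zeta)\notin\mathcal{C}([0,T]:{\mathbb{R}}_+^2)$ or $\mathcal{U}(\xi,\zeta)=\emptyset$, so those elements contribute nothing to the infimum; hence the two infima coincide, and the rate function produced by the contraction principle is exactly $I_T^\ast$.

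Finally I would record that $I_T^\ast$ is a good rate function: since $I_T$ is a good rate function on $\mathcal{D}([0,T]:{\mathbb{R}}_+^2)$ (this is part of, or immediate from, Theorem \ref{th:thm1}) and $\pi$ is continuous, the contraction principle guarantees that the push-forward rate function $I_T^\ast$ is again good, and in particular lower semicontinuous with compact sublevel sets, so it is a bona fide rate function for the claimed LDP on ${\mathbb{R}}_+$.

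The main obstacle, such as it is, is the continuity of the evaluation map $\pi$ at time $T$ in the Skorohod topology, since $t\mapsto x(t)$ need not be continuous in general; the resolution, as noted above, is that the rate function $I_T$ concentrates on paths whose $\zeta$-component is absolutely continuous, so $\pi$ restricted to the effective domain $\{I_T<\infty\}$ is continuous, which is all the contraction principle requires. Everything else is a routine invocation of the contraction principle.
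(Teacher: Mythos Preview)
Your approach is exactly what the paper does: it simply states that Theorem~\ref{thm:contra} follows from Theorem~\ref{th:thm1} by the contraction principle, without further detail.

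One small point worth cleaning up: your discussion of the continuity of $\pi(\xi,\zeta)=\zeta(T)$ is more delicate than it needs to be, and the justification you give is not quite right. Evaluation at the \emph{right endpoint} $T$ is continuous on all of $\mathcal{D}([0,T]:{\mathbb{R}}_+^2)$ in the Skorohod topology, because the time changes in the $J_1$ metric are homeomorphisms of $[0,T]$ and hence fix $T$; no restriction to continuity points of the path is needed. By contrast, your fallback claim that ``continuity on $\{I_T<\infty\}$ is all the contraction principle requires'' is not correct in general for the standard contraction principle (one needs either global continuity or an approximation argument). Here it does not matter, since $\pi$ is globally continuous, but you should replace the effective-domain argument with the endpoint observation.
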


There does not appear to be a simple form expression for $I^*_T(\gamma)$ for
fixed $T>0$ and $\gamma \ge 0$ and therefore we consider asymptotics of $%
I^*_T(\gamma)$ for large $T$. For this we restrict attention to the case $%
\lambda \ge \mu$. Let 
\begin{equation}  \label{eq:eqiii}
\begin{aligned} I_{\gamma, T}^* & \doteq I^*_T(\gamma T) = \inf \{
I_T(\xi,\zeta) : \zeta(T)=\gamma T\}, \quad \gamma \ge 0, \\ I_{\gamma,T} &
\doteq \inf \{ I_T(\xi,\zeta) : \zeta(T) \ge \gamma T \}, \quad \gamma \ge
\lambda - \mu, \\ {\tilde{I}}_{\gamma,T} & \doteq \inf \{ I_T(\xi,\zeta) :
\zeta(T) \le \gamma T \}, 0 \le \gamma \le \lambda-\mu. \end{aligned}
\end{equation}
Let 
\begin{equation}  \label{cgamma}
C(\gamma) \doteq \lambda \left(1-z_\gamma^{-1}\right) + \mu
\left(1-z_\gamma\right) - \gamma \log z_\gamma,
\end{equation}
where 
\begin{equation}  \label{eq:z}
z_\gamma \equiv z(\gamma) \doteq \frac{\sqrt{\gamma^2 + 4\lambda\mu}-\gamma}{%
2\mu} = \frac{2\lambda}{\sqrt{\gamma^2 + 4\lambda\mu}+\gamma}.
\end{equation}

\begin{theorem}
\label{thm:I*} Suppose that $\lambda \ge \mu$. We have the following
asymptotic formulae. 
\begin{align*}
\lim_{T \to \infty} \frac{I_{\gamma, T}^*}{T} & = C(\gamma), \quad \gamma
\ge 0, \\
\lim_{T \to \infty} \frac{I_{\gamma,T}}{T} & = C(\gamma), \quad \gamma \ge
\lambda-\mu, \\
\lim_{T \to \infty} \frac{{\tilde{I}}_{\gamma,T}}{T} & = C(\gamma), \quad 0
\le \gamma \le \lambda-\mu.
\end{align*}
\end{theorem}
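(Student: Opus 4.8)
The plan is to analyze the variational problem defining $I_{\gamma,T}^*$ (and its one-sided analogues) via the Euler--Lagrange equations, extract the structure of the minimizers up to a single scalar parameter $A=A(x_0,T)$, and then pass to the limit $T\to\infty$. First I would reduce the problem: by Theorem \ref{thm:contra} and the contraction principle, $I_{\gamma,T}^* = I_T^*(\gamma T)$ is the infimum of the PRM-type cost over trajectories $(\xi,\zeta)$ satisfying \eqref{eq:eqxizet} with the terminal constraint $\zeta(T)=\gamma T$. Using the explicit form \eqref{eq:I_T} of $I_T$ and optimizing over the controls $\varphi_i$ for a \emph{fixed} path $\xi$ (a pointwise Legendre-transform computation, since $\ell$ is the rate function of a unit-rate Poisson process), one rewrites the cost as a classical one-dimensional calculus-of-variations functional $\int_0^T L(\xi(s),\dot\xi(s))\,ds$ on the region where $\xi>0$, plus boundary contributions where $\xi$ hits $0$ (handled by the Skorohod map $\Gamma$). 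The terminal constraint $\zeta(T)=\gamma T$ enters through a Lagrange multiplier, which is exactly the scalar $A$ referred to in the introduction.

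Next I would solve the Euler--Lagrange equation for this reduced problem. Because the dynamics \eqref{eq:fluidlimit} are affine in $\xi$ with the quadratic-type Lagrangian coming from $\ell$, the E--L equation should be explicitly integrable, yielding minimizing trajectories that are governed by the algebraic relation built into \eqref{eq:z}: the quantity $z_\gamma = (\sqrt{\gamma^2+4\lambda\mu}-\gamma)/(2\mu)$ is precisely the stationary value of the optimal "tilted" drift ratio, obtained by setting $\lambda^*=\mu^* z_\gamma$, $\theta^* = \theta$ (the reneging paradox: no tilt in the reneging rate), $\theta^* y = \gamma$ along the optimal path. I would show that for $T$ large, the optimal trajectory spends all but an $o(T)$ fraction of time near the stationary regime where $\xi$ is constant and the instantaneous cost equals $\lambda\ell(z_\gamma^{-1}) + \mu\ell(z_\gamma)$ — an elementary computation shows this equals $C(\gamma)$ as defined in \eqref{cgamma} after using $\theta^*=\theta$ to kill the reneging term and the constraint $\theta^* y=\gamma$. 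This is the content of the heuristic around \eqref{102}, now to be justified rigorously: the parameter $A$ (equivalently $z_\gamma$) is characterized by a nonlinear equation (referenced as \eqref{eq:A-def} and Lemma \ref{lem:minimizer-existence}) whose solution one studies as $T\to\infty$.

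The limit argument then has two halves. For the upper bound $\limsup_{T\to\infty} I_{\gamma,T}^*/T \le C(\gamma)$, I would construct an explicit near-optimal path: start at $x_0$, move quickly (in $o(T)$ time, at $O(1)$ cost) onto the stationary level for $(\lambda,\mu,\theta,\gamma)$, stay there for time $T-o(T)$ accumulating cost $(C(\gamma)+o(1))T$ while the reneging count grows at rate exactly $\gamma$, then adjust over a final $o(T)$ window to meet $\zeta(T)=\gamma T$ exactly. For the lower bound, I would use the characterization of the E--L minimizers together with monotonicity/convexity properties of $C$ and of the parameter $A(x_0,T)$ in $T$ to show that \emph{every} admissible path has cost at least $(C(\gamma)-o(1))T$; here the key structural facts are that the reduced Lagrangian $L$ is convex and that the only way to keep long-run cost per unit time below $C(\gamma)$ would force a reneging rate different from $\gamma$, contradicting the terminal constraint. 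The one-sided versions ($I_{\gamma,T}$ with $\zeta(T)\ge \gamma T$ and ${\tilde I}_{\gamma,T}$ with $\zeta(T)\le\gamma T$, on their respective $\gamma$-ranges relative to $\lambda-\mu$) follow from the equality case by monotonicity of $\gamma\mapsto C(\gamma)$: since $\gamma_0=(\lambda-\mu)^+$ is the LLN value where $C$ vanishes and $C$ is decreasing on $[0,\lambda-\mu]$ and increasing on $[\lambda-\mu,\infty)$, the infimum over the half-line constraint is attained at the endpoint $\gamma T$.

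\textbf{Main obstacle.} The hard part will be the lower bound in the $T\to\infty$ limit: controlling the behavior of the scalar parameter $A=A(x_0,T)$ (defined via a nonlinear equation with no closed form for fixed $T$) uniformly as $T\to\infty$, and ruling out minimizing trajectories that exploit the free boundary $\xi=0$ or long transient excursions to beat the stationary cost. Establishing the requisite monotonicity and limit properties of $A(x_0,T)$ — and showing the boundary/transient contributions are genuinely $o(T)$ rather than affecting the leading term — is the technical crux; the explicit formula \eqref{cgamma} then drops out once the stationary regime is shown to dominate.
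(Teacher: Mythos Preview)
Your plan is correct and matches the paper's approach in its essentials: reduce $I_T$ to an integral of a local rate function via pointwise optimization over controls (Lemma~\ref{lem:rewrite}), solve the Euler--Lagrange equations to get an explicit family of candidate minimizers indexed by a scalar, study that scalar as $T\to\infty$ (Lemma~\ref{lem:minimizer-existence}), and deduce the one-sided quantities from the convexity/monotonicity of $\gamma\mapsto I_{\gamma,T}^*$ (Lemma~\ref{lem:IT_convex}). A few places where the paper's execution differs from your description: the paper keeps $(\xi,\zeta)$ as a \emph{two}-dimensional state with Lagrangian $L(\xi,\xi',\zeta')$ and writes two E--L equations $L_1=\tfrac{d}{dt}L_2$, $0=\tfrac{d}{dt}L_3$ together with the transversality condition $L_2|_{t=T}=0$, rather than reducing to a one-dimensional problem in $\xi$ with $A$ as a Lagrange multiplier---in the paper $A$ and $B$ are \emph{integration constants} from these ODEs, linked by $B(1-Ae^{\theta T})=1$; for the upper bound (Proposition~\ref{prop:I*limsup}) it simply plugs the $A=0$ member of the E--L family as a test trajectory and evaluates its cost, instead of your three-piece ``move--stay--adjust'' construction; and for the lower bound it proves the E--L trajectory is the \emph{exact} global minimizer for every large $T$ (when $x_0>0$) via a direct convexity verification (Lemma~\ref{lem:minimizer-proof}), then handles $x_0=0$ by a perturbation argument, rather than a soft long-run-average bound. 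Your identification of the main obstacle---the asymptotics of $A(x_0,T)$ and the possibility that minimizers exploit the boundary $\xi=0$---is accurate and is exactly what Lemmas~\ref{lem:minimizer-existence} and~\ref{lem:minimizer-property} address.
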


\begin{remark}
\label{rem:limsupinf} From Theorem \ref{thm:I*} and the convexity and
monotonicity properties of $I_{\gamma, T}^*$ given in Lemma \ref%
{lem:IT_convex} it follows that the quantities $\chi^+(\gamma)$ and $%
\chi^-(\gamma)$ defined in the Introduction equal $C(\gamma)$ for every $%
\gamma>0$.
\end{remark}

{
To give an interpretation for taking both $n$ and $T$ to be large parameters,
and to demonstrate a simple use of our results,
consider a service capacity allocation problem.
In this system, the arrival rate is large whereas the reneging
rate per customer is order 1 (and unknown). We take $\la=1$, and $\theta>0$,
so that in the $n$th system the arrival rate is $n$,
and the individual reneging rate is $\theta$.
We wish to find the smallest service capacity parameter $\mu\in(0,1)$
(corresponding in the $n$th system to service at rate $\mu n$)
so that it is guaranteed that the reneging count $R^n(T)$ over a large interval
of time $[0,T]$, satisfies
$P(R^n(T)>\gamma nT)\le e^{-cnT}$ for given constants $\gamma$
and $c$. Consider for example $\gamma=1$ and {$c=0.1$}. Then
we have $z(\gamma) = \frac{\sqrt{1+4\mu}-1}{2\mu} = \frac{2}{\sqrt{1+4\mu}+1}$ and $C(\gamma) = 1+\mu - \sqrt{1+4\mu} - \log \frac{2}{\sqrt{1+4\mu}+1}$. Setting $C(\gamma)={0.1}$ gives $\mu \approx {0.5723}$. Note that the calculation
does not depend on $\theta$.
}

\section{Proof of Theorem \protect\ref{th:thm1}.}

Let $Z^n = (X^n, Y^n)$ and $\mathcal{E} = \mathcal{D}([0,T]: {\mathbb{R}}%
_+^2)$.  From the equivalence betweeen a large deviation principle and a Laplace principle \cite[Section 1.2]{dupell4} it suffices to
show that the function $I_T$ has compact sublevel sets and the following bounds hold for all $h \in C_b(\mathcal{E})$. \newline

\noindent \textbf{Laplace Upper Bound} 
\begin{equation}  \label{eq:lapuppaband}
\liminf_{n\to \infty} -\frac{1}{n} \log E \exp \left\{-n h(Z^n)\right\} \ge
\inf_{\phi \in \mathcal{E}}\left[I_T(\phi)+ h(\phi)\right].
\end{equation}
\noindent \textbf{Laplace Lower Bound} 
\begin{equation}  \label{eq:laplowaband}
\limsup_{n\to \infty} -\frac{1}{n} \log E \exp \left\{-n h(Z^n)\right\} \le
\inf_{\phi \in \mathcal{E}}\left[I_T(\phi)+ h(\phi)\right].
\end{equation}
The proof of compactness of level sets is analogous to the proof of the upper bound and is therefore omitted. The Laplace upper bound is proved in Section \ref{sec:lapuppaband}
and the lower bound is established in Section \ref{sec:laplowbd}.

\subsection{Variational Representation and Weak Convergence of Controlled Processes.}

We will use the following useful representation formula proved in \cite%
{BudhirajaDupuisMaroulas2011variational} and \cite[Theorem 2.4]%
{BudhirajaChenDupuis2013large} (see also \cite[Theorem 8.12]{BudhirajaDupuis2019analysis}). Recall that $\bar{\mathcal{A}}_{1},\bar{%
\mathcal{A}}_{2},\bar{\mathcal{A}}_{3}$ denote the class of all measurable
maps from $(\Omega \times \lbrack 0,T],\mathcal{\bar{P}})$, $(\Omega \times
\lbrack 0,T],\mathcal{\bar{P}})$, $(\Omega \times \lbrack 0,T]\times {%
\mathbb{R}}_{+},\mathcal{\bar{P}}\times \mathcal{B}(\mathbb{R}_{+}))$ to $(%
\mathbb{R}_{+},\mathcal{B}(\mathbb{R}_{+}))$, respectively. For each $m\in 
\mathbb{N}$, let 
\begin{align*}
& \bar{\mathcal{A}}_{b,k}\doteq \{(\varphi _{1},\varphi _{2},\varphi
_{3}):\varphi _{i}\in \bar{\mathcal{A}}_{i}\mbox{ for each }i=1,2,3%
\mbox{
such that for all }(\omega ,t,y)\in \Omega \times \lbrack 0,T]\times {%
\mathbb{R}}_{+}, \\
& \qquad \qquad \qquad \qquad \qquad \varphi _{1}(\omega ,t),\varphi
_{2}(\omega ,t),\varphi _{3}(\omega ,t,y)\in \lbrack 1/k,k], {\text{ and } \varphi _{3}(\omega ,t,y)=1 \text{ when } y > k}\}
\end{align*}%
and let $\bar{\mathcal{A}}_{b}\doteq \cup _{k=1}^{\infty }\bar{\mathcal{A}}%
_{b,k}$.
{
(With a slight abuse of notation, here and in what follows,
$\varphi_i$ denote stochastic processes, unlike
in Section \ref{sec22} where $\varphi_i$ were used to denote deterministic
functions).
}
Denote ${\mathcal{M}}\doteq \lbrack \mathcal{M}_{FC}([0,T])]^{2}%
\times \mathcal{M}_{FC}([0,T]\times {\mathbb{R}}_{+})$. Recall the function $%
\ell $ defined in \eqref{eq:ell}. Let for $m\in {\mathbb{N}}$, $%
N^{m}=(N_{1}^{m},N_{2}^{m},N_{3}^{m})$ and for $\varphi =(\varphi
_{1},\varphi _{2},\varphi _{3})\in \times _{i=1}^{3}\bar{\mathcal{A}}_{i}$,
let $N^{\varphi }=(N_{1}^{\varphi _{1}},N_{2}^{\varphi _{2}},N_{3}^{\varphi
_{3}})$. $N^{m}$, $N^{\varphi }$ are regarded as a ${\mathcal{M}}$ valued
random variables. The theorem represents an expected value in terms of
infima over both $\times _{i=1}^{3}\bar{\mathcal{A}}_{i}$ and $\bar{\mathcal{%
A}}_{b}$. The latter is sometimes more convenient since for each fixed
control there are uniform upper and lower (away from zero) bounds.

\begin{theorem}
\label{thm:var_repn} Let $F$ be a real valued bounded measurable map on  ${\mathcal{M}}$. Then for $m >
0 $, 
\begin{align*}
& -\log {E} e^{-F(N^m)} \\
& \quad = \inf_{\varphi = (\varphi_i)_{i=1,2,3}\in \times_{i=1}^3\bar{%
\mathcal{A}}_i}{E} \left[ m \int_0^T \left( \lambda \ell(\varphi_1(s)) + \mu
\ell(\varphi_2(s)) + \theta \int_{{\mathbb{R}}_+} \ell(\varphi_3(s,y))\,dy
\right)ds + F(N^{m\varphi}) \right] \\
& \quad = \inf_{\varphi = (\varphi_i)_{i=1,2,3} \in \bar{\mathcal{A}}_b} {E} %
\left[ m \int_0^T \left( \lambda \ell(\varphi_1(s)) + \mu \ell(\varphi_2(s))
+ \theta \int_{{\mathbb{R}}_+} \ell(\varphi_3(s,y))\,dy \right)ds +
F(N^{m\varphi}) \right].
\end{align*}
\end{theorem}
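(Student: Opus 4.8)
The plan is to obtain Theorem~\ref{thm:var_repn} as a specialization of the general variational representation for Laplace functionals of Poisson random measures established in \cite{BudhirajaDupuisMaroulas2011variational} and \cite[Theorem~2.4]{BudhirajaChenDupuis2013large}; the work is almost entirely a translation of notation, and I will isolate the one point (reduction to bounded controls) that needs an argument if it is not simply read off from those references. First I would pass from the three independent PRM to one: since $N_{1},N_{2},N_{3}$ are mutually independent, the random measure $\Nbar$ on the disjoint union $\Ymb\doteq([0,T]\times\Rmb_{+})\sqcup([0,T]\times\Rmb_{+})\sqcup([0,T]\times\Rmb_{+}^{2})$ that restricts to $N_{i}$ on its $i$th summand is a PRM whose intensity restricts to $\lambda\,ds\,dy$, $\mu\,ds\,dy$, $\theta\,ds\,dy\,dz$ on the three summands. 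A bounded measurable $F$ on $\mathcal{M}$ pulls back, through $\Nbar\mapsto N^{m}=(N_{1}^{m},N_{2}^{m},N_{3}^{m})\mapsto F(N^{m})$, to a bounded measurable functional of $\Nbar$; every predictable control on $\Nbar$ decomposes as $\varphi=(\varphi_{1},\varphi_{2},\varphi_{3})$ with $\varphi_{i}\in\bar{\mathcal{A}}_{i}$; and $\bar{\mathcal{A}}_{b,k}$ corresponds to the controls on $\Nbar$ that are bounded above by $k$ and below by $1/k$.

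Next I would identify $N^{m}$ and $N^{m\varphi}$ with the reference and controlled measures of the general theory. In the formulation of \cite{BudhirajaChenDupuis2013large} one starts from a PRM on a product space carrying an auxiliary $[0,\infty)$-valued coordinate $r$; the reference PRM is its restriction to $\{r\le m\}$ and the controlled PRM its restriction to $\{r\le m\varphi\}$. For the $N_{1}$- and $N_{2}$-blocks the auxiliary coordinate is $y\in\Rmb_{+}$ over the base space $[0,T]$ carrying intensity $\lambda\,ds$, resp.\ $\mu\,ds$, so $N_{i}^{m}$ is exactly the reference PRM (intensity $m\lambda\,ds$, resp.\ $m\mu\,ds$) and $N_{i}^{m\varphi_{i}}$ exactly the controlled PRM (intensity $m\lambda\varphi_{i}(s)\,ds$, resp.\ $m\mu\varphi_{i}(s)\,ds$); for the $N_{3}$-block the auxiliary coordinate is $z\in\Rmb_{+}$ over the base space $[0,T]\times\Rmb_{+}$ carrying intensity $\theta\,ds\,dy$, so $N_{3}^{m}$ is the reference PRM (intensity $m\theta\,ds\,dy$) and $N_{3}^{m\varphi_{3}}$ the controlled PRM (intensity $m\theta\varphi_{3}(s,y)\,ds\,dy$). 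Under this dictionary the abstract running cost $m\int\ell(\varphi)\,d(\text{base intensity})$ on the three blocks becomes $m\lambda\int_{0}^{T}\ell(\varphi_{1}(s))\,ds$, $m\mu\int_{0}^{T}\ell(\varphi_{2}(s))\,ds$ and $m\theta\int_{0}^{T}\int_{\Rmb_{+}}\ell(\varphi_{3}(s,y))\,dy\,ds$, whose sum is exactly the integral in the statement, and the controlled functional is $F((N_{1}^{m\varphi_{1}},N_{2}^{m\varphi_{2}},N_{3}^{m\varphi_{3}}))=F(N^{m\varphi})$. With these identifications the first equality is precisely the representation of \cite[Theorem~2.4]{BudhirajaChenDupuis2013large} (its scaling constant being allowed to be any positive number; alternatively, apply \cite{BudhirajaDupuisMaroulas2011variational} to the PRM whose intensity has been multiplied by $m$), and the second equality is the version of it with controls bounded away from $0$ and $\infty$, also contained in that reference.

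Should one prefer to derive the second equality directly from the first, the standard argument is truncation. Given $\varphi\in\times_{i=1}^{3}\bar{\mathcal{A}}_{i}$ with finite cost, put $\varphi_{i}^{k}\doteq(\varphi_{i}\vee k^{-1})\wedge k\in\bar{\mathcal{A}}_{b,k}$. Because $\ell$ decreases on $[0,1]$ and increases on $[1,\infty)$, one has $\ell(\varphi_{i}^{k})\le\ell(\varphi_{i})$ pointwise, so the running-cost part of the right-hand-side functional does not increase when $\varphi$ is replaced by $\varphi^{k}$; and $N^{m\varphi^{k}}\to N^{m\varphi}$ in $\mathcal{M}$ in probability, since on each compact set the expected total variation between the two thinned measures is bounded by a constant times $E\int|\varphi_{i}^{k}-\varphi_{i}|$, which tends to $0$ by the uniform integrability coming from the superlinear growth of $\ell$ at infinity together with finiteness of the cost. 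For bounded continuous $F$ this gives $E[F(N^{m\varphi^{k}})]\to E[F(N^{m\varphi})]$, so $\limsup_{k\to\infty}$ of the right-hand-side functional evaluated at $\varphi^{k}$ is at most its value at $\varphi$; taking infima yields the second equality for continuous $F$, and the extension to bounded measurable $F$ is a routine monotone-class argument.

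I expect the only genuine difficulty to be bookkeeping: aligning the auxiliary-coordinate convention of the abstract representation with the coordinates $y$ (for $N_{1},N_{2}$) and $z$ (for $N_{3}$) here, so that the reference and controlled PRM produced abstractly by the theorem coincide on the nose with $N^{m}$ and $N^{m\varphi}$ as defined by the explicit thinning formulas in Section~\ref{sec:setting}. Once this is verified the statement is a direct quotation, needing no analytic input beyond the truncation step above.
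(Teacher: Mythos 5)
Your proposal matches the paper's treatment: the paper gives no proof of Theorem \ref{thm:var_repn}, stating it as a direct consequence of the representations in \cite{BudhirajaDupuisMaroulas2011variational} and \cite[Theorem 2.4]{BudhirajaChenDupuis2013large}, which is exactly the specialization you carry out (correctly) via the disjoint-union PRM and the identification of the auxiliary coordinates $y$ and $z$. Your supplementary truncation argument for the reduction to $\bar{\mathcal{A}}_b$ is also sound (indeed the coupling bound you use shows $P(N^{m\varphi^k}\neq N^{m\varphi})\to 0$, so the restriction to continuous $F$ and the closing monotone-class step are unnecessary), but this is beyond what the paper itself records.
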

{We note that the representation  in \cite{BudhirajaDupuisMaroulas2011variational,
BudhirajaChenDupuis2013large, BudhirajaDupuis2019analysis} is given for a single PRM with points in $[0,T]\times \mathcal{X}$
where $\mathcal{X}$ is some locally compact space. One can identify the triplet $N^m = (N^m_1, N^m_2, N^m_3)$ with a single PRM with points in the space $[0,T] \times \Rmb_+ \times \{1,2,3\}$ and intensity $\leb_T \times\nu $
where, $\leb_T$ is the Lebesgue measure on $[0,T]$ and $\nu$ is a locally-finite measure on $\mathcal{X} =  \Rmb_+ \times \{1,2,3\}$ defined as
{
\begin{align*}
	& \nu(\{0\}\times \{1\}) =m\lambda, \quad \nu((0,\infty)\times\{1\} ) = 0, \\
	& \nu(\{0\}\times \{2\}) =m\mu, \quad \nu((0,\infty)\times\{2\} ) = 0, \\
	& \nu( A\times \{3\}) = m\theta \: \leb_{\infty}(A), \quad A \in \Bmc(\Rmb_+),
\end{align*}
}where $\leb_\infty$ is the Lebesgue measure on $\Rmb_+$. With this identification the proof of Theorem \ref{thm:var_repn}
is immediate from the above references. In particular the proof of the fact that one can restrict the infimum to the smaller class $\bar{\mathcal{A}}_b$ can be found in \cite[Theorem 8.12]{BudhirajaDupuis2019analysis}.}
\begin{remark}
	\label{rem:funcpsi}
We mention difficulties to apply the continuous mapping approach.

\noindent (i) Let  $\mathcal{M}^0_{FC}([0,T])$ [resp.\ $\mathcal{M}^0_{FC}([0,T]\times {\mathbb{R}}_{+})$] be the subset of $\mathcal{M}_{FC}([0,T])$ [resp.\ $\mathcal{M}_{FC}([0,T]\times {\mathbb{R}}_{+})$] consisting of all `atomic measures' $\nu$ of the form  such that there is some finite set $F \subset [0,T]$ with $\nu(F^c) = 0$ [resp.\ for every $k \in \Nmb$ there is a finite set $F_k$ with $\nu(F_k^c \times [0,k])=0$].
	Let $\Mmc^0 \doteq \lbrack \mathcal{M}_{FC}^0([0,T])]^{2}%
\times \mathcal{M}_{FC}^0([0,T]\times {\mathbb{R}}_{+})$.
	Then for any $\nu = (\nu_1, \nu_2, \nu_3) {\in \Mmc^0}$, there is a unique $z^{\nu}=(x^{\nu},y^{\nu}) \in \Emc$ that satisfies
	\begin{align*}
	x^{\nu}(t)& =x_{n}+\frac{1}{n}\nu_1[0,t]-\frac{1}{n}\int_{[0,t]}%
	{\boldsymbol{1}}_{\{x^{\nu}(s-)\neq 0\}}\,\nu_2(ds)  -\frac{1}{n}\int_{[0,t]\times {\mathbb{R}}_{+}}{\boldsymbol{1}}%
	_{\left[0,\left( x^{\nu}(s-)-\frac{1}{n}\right) ^{+}\right]}(y)\,\nu_3(ds\,dy) \\
	y^{\nu}(t)& =\frac{1}{n}\int_{[0,t]\times {\mathbb{R}}_{+}}{\boldsymbol{1}}%
	_{\left[0,\left( x^{\nu}(s-)-\frac{1}{n}\right) ^{+}\right]}(y)\,\nu_3(ds\,dy).
	\end{align*}%
	Define $\Psi^n(\nu) = x^{\nu}$ for $\nu \in \Mmc^0$. For $\nu \in \Mmc\setminus \Mmc^0$, the map is defined in an arbitrary manner so that $\Psi^n$ is a measurable map from $\Mmc$ to $\Emc$. We then have that $Z^n= \Psi^n(N^n)$ a.s. for each $n$.
	We note that the above system of equations may not be wellposed for a general $\nu \in \Mmc \setminus \Mmc^0$ and for this reason there is no natural way to define $\Psi^n$ on all of $\Mmc$ with nice regularity properties (e.g. continuity). In particular there is no obvious way to implement a contraction principle approach using this representation in order to prove the LDP. 

\noindent (ii)
It is also possible to write $X$ in
terms of Poisson processes on the line $M$, $N$, $K$
(of rates $\la$, $\mu$, $\theta$, respectively) as
\begin{equation*}
X=\Gamma \Big(x+M-N-K\Big(\int_{0}^{\cdot }X(s)ds\Big)\Big),
\end{equation*}%
in which case $X|_{[0,t]}$ is a function of $M|_{[0,t]}$, $N|_{[0,t]}$ and $%
K|_{[0,\infty )}$. Letting $M^{n}$, $N^{n}$ denote accelerated by $n$
versions (recall that reneging rate is not accelerated), and $X^{n}$ the
resulting queue length, specializing to zero initial condition, we have $%
X^{n}=\Gamma (M^{n}-N^{n}-K(\int_{0}^{\cdot }X^{n}(s)ds))$. We can then
write versions that are both accelerated and rescaled by letting $\bar{M}%
^{n}=n^{-1}M(n\cdot )$, $\bar{N}^{n}=n^{-1}N(n\cdot )$, $\bar{K}%
^{n}=n^{-1}K(n\cdot )$, $\bar{X}^{n}=n^{-1}X^{n}$, and thereby obtain $\bar{X%
}^{n}=\Gamma (\bar{M}^{n}-\bar{N}^{n}-\bar{K}^{n}(\int_{0}^{\cdot }\bar{X}%
^{n}(s)ds))$. From this we see that $\bar{X}^{n}|_{[0,t]}$ depends on $\bar{K}%
^{n}|_{[0,\infty )}$ and not just $\bar{K}^{n}|_{[0,t]}$, and therefore, again,
it is not
clear how one can address the problem via the continuous mapping approach.
	
\end{remark}

Fix $h \in C_b({\mathcal{E}})$. Since $Z^n$ can be written as $\Psi^n (N^n)$
for some measurable function $\Psi^n $ from ${\mathcal{M}}$ to ${\mathcal{E}}$ 
we have from the second equality in Theorem \ref{thm:var_repn} that with $%
(m,F)=(n,nh\circ \Psi^n)$, 
\begin{align}
-\frac{1}{n} \log E e^{-n h(Z^n)} & = \inf {E} \left[ \int_0^T \left(
\lambda \ell(\varphi_1^n(s)) + \mu \ell(\varphi_2^n(s)) + \theta \int_{{%
\mathbb{R}}_+} \ell(\varphi_3^n(s,y))\,dy \right)ds + h({\bar{Z}}^n) \right],
\label{eq:mainrepn}
\end{align}
where the infimum is taken over all $\varphi^n = (\varphi_i^n)_{i=1,2,3} \in 
\bar{\mathcal{A}}_b$ and
$\bar Z^n = (\bar X^n, \bar Y^n)$ solves 
\begin{align}
\bar X^n(t) &= x_n + \frac{1}{n} N_1^{n\varphi_1^n}(t) - \frac{1}{n}
\int_0^t {\boldsymbol{1}}_{\{{\bar{X}}^n(s-)\neq 0\}}
\,N_2^{n\varphi_2^n}(ds) \\
&\quad - \frac{1}{n} \int_{[0,t]\times {\mathbb{R}}_+} {\boldsymbol{1}}_{\left[0,
\left({\bar{X}}^n(s-) - \frac{1}{n}\right)^+\right]}(y)
\,N_3^{n\varphi_3^n}(ds\,dy)  \notag \\
& = \Gamma \left(x_n + \frac{1}{n} N_1^{n\varphi_1^n}(\cdot) - \frac{1}{n}
N_2^{n\varphi_2^n}(\cdot) - \frac{1}{n} \int_{[0,\cdot]\times {\mathbb{R}}%
_+} {\boldsymbol{1}}_{\left[0, \left({\bar{X}}^n(s-) - \frac{1}{n}\right)^+\right]}(y)
\, N_3^{n\varphi_3^n}(ds\,dy)\right) (t),  \label{eq:Xbar_n} \\
\bar Y^n(t) &= \frac{1}{n} \int_{[0,t]\times {\mathbb{R}}_+} {\boldsymbol{1}}%
_{\left[0, \left({\bar{X}}^n(s-) - \frac{1}{n}\right)^+\right]}(y) \,
N_3^{n\varphi_3^n}(ds\,dy).  \label{eq:Ybar_n}
\end{align}

In the proof of both the upper and lower bound (see below Lemma \ref{lem:iitil} and below Proposition \ref{prop:uniq}) it will be sufficient to
consider a sequence $\{\varphi ^{n}\}\subset \bar{\mathcal{A}}_{b}$ that
satisfies the following uniform bound for some $M_{0}<\infty $:
\begin{equation}
\sup_{n\in \mathbb{N}}\int_{0}^{T}\left( \ell (\varphi _{1}^{n}(s))+\ell
(\varphi _{2}^{n}(s))+\int_{{\mathbb{R}}_{+}}\ell (\varphi
_{3}^{n}(s,y))\,dy\right) ds\leq M_{0},\mbox{ a.s.\ }{P}.
\label{eq:cost_bd_repn}
\end{equation}%
In the rest of this section we study tightness and convergence properties of
controlled processes $\{{\bar{Z}}^{n}\}$ that are driven by controls $%
\{\varphi ^{n}=(\varphi _{1}^{n},\varphi _{2}^{n},\varphi _{3}^{n})\}$ that
satisfy the a.s.\ bound (\ref{eq:cost_bd_repn}).

For $0 \le K<\infty$, let ${\mathcal{S}}_K$ be the collection of all
triplets $g = (g_1, g_2, g_3)$, where $g_1, g_2 \colon [0,T] \to {\mathbb{R}}%
_+$, $g_3 \colon [0,T]\times {\mathbb{R}}_+ \to {\mathbb{R}}_+$ are
measurable maps such that 
\begin{equation*}
\int_0^T \left( \ell(g_1(s)) + \ell(g_2(s)) + \int_{{\mathbb{R}}_+}
\ell(g_3(s,y))\,dy \right) ds \le K.
\end{equation*}
An element $g = (g_1,g_2,g_3) \in {\mathcal{S}}_K$ can be identified with
elements $\nu^g \in {\mathcal{M}}$, where 
\begin{equation}  \label{eq:nu}
\nu^g_i(A) \doteq \int_A g_i(s)\,ds, \:\: \nu^g_3(A\times B) \doteq \int_{A
\times B} g_3(s,y) \,ds\,dy, \:\: i=1,2, A \in {\mathcal{B}}([0,T]), B \in {%
\mathcal{B}}({\mathbb{R}}_+).
\end{equation}
With this identification ${\mathcal{S}}_K$ is a compact metric space (cf.\ 
\cite[Lemma A.1]{BudhirajaChenDupuis2013large}). Let $\tilde{\mathcal{A}}%
_{b,K} \doteq \{ \varphi \in \bar{\mathcal{A}}_b: \varphi(\omega) \in {%
\mathcal{S}}_K \mbox{ a.s. }\}$.

For $\varphi^n \in \bar{\mathcal{A}}_b$, define the compensated processes 
\begin{align*}
{\tilde{N}}^{n\varphi_1^n}_1(ds) & \doteq N^{n\varphi_1^n}_1(ds) - \lambda n
\varphi_1^n(s)\,ds, \\
{\tilde{N}}^{n\varphi_2^n}_2(ds) & \doteq N^{n\varphi_2^n}_2(ds) - \mu n
\varphi_1^n(s)\,ds, \\
{\tilde{N}}^{n\varphi_3^n}_3(ds\,dy) & \doteq N^{n\varphi_3^n}_3(ds\,dy) -
\theta n \varphi_3^n(s,y)\,ds\,dy.
\end{align*}
Then ${\tilde{N}}^{n\varphi_1^n}_1([0,t])$, ${\tilde{N}}^{n%
\varphi_2^n}_2([0,t])$, ${\tilde{N}}^{n\varphi_3^n}_3([0,t] \times A)$ are $%
\{{\mathcal{F}}_t\}$-martingales for $A \in {\mathcal{B}}({\mathbb{R}}_+)$
with $E(\nu^{\varphi^n}_3([0,T]\times A))<\infty$.

The following lemma summarizes some elementary properties of $\ell$. For
part (a) we refer to \cite[Lemma 3.1]{BudhirajaDupuisGanguly2015moderate},
and part (b) is an easy calculation that is omitted.

\begin{lemma}
\phantomsection
\label{lem:property_ell}

\begin{enumerate}[(a)]

\item For each {$\beta > 1$}, there exists $\gamma(\beta) \in (0,\infty)$ such
that $\gamma(\beta) \to 0$ as $\beta \to \infty$ and $x \le \gamma(\beta)
\ell(x)$, for {$x \ge \beta$}.

\item For $x \ge 0$, $x \le \ell(x) + 2$.
\end{enumerate}
\end{lemma}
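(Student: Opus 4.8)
The plan is to verify both parts by elementary one–variable calculus, since each reduces to checking the sign of a strictly convex function on a half–line.

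For part (b), I would set $f(x)\doteq \ell(x)+2-x = x\log x - 2x + 3$ for $x>0$, extended by $f(0)\doteq 3$ (consistent with $\lim_{x\downarrow 0}x\log x=0$ and $\ell(0)=1$). Then $f'(x)=\log x-1$ vanishes only at $x=e$, and $f''(x)=1/x>0$, so $f$ is strictly convex with a unique global minimum at $x=e$. Since $f(e)=e-2e+3=3-e>0$, we obtain $f(x)\ge 3-e>0$ for all $x\ge 0$, i.e. $x\le \ell(x)+2$ (in fact with a positive margin). This is the ``easy calculation'' alluded to in the text; the point worth noting is that $f(e)=3-e$ is positive, which is precisely what makes the additive constant $2$ (rather than a larger one) admissible.

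For part (a), fix $\beta>1$ and consider $h(x)\doteq \ell(x)/x = \log x - 1 + 1/x$ on $(1,\infty)$. Then $h'(x)=\tfrac1x-\tfrac1{x^2}=\tfrac{x-1}{x^2}>0$ for $x>1$, so $h$ is strictly increasing on $(1,\infty)$, with $h(1)=0$ and $h(x)\to\infty$ as $x\to\infty$; in particular $h(\beta)>0$. Consequently, for every $x\ge\beta$,
\begin{equation*}
\frac{x}{\ell(x)}=\frac{1}{h(x)}\le\frac{1}{h(\beta)}\doteq\gamma(\beta),
\end{equation*}
so that $\gamma(\beta)=\beta/\ell(\beta)\in(0,\infty)$ satisfies $x\le\gamma(\beta)\ell(x)$ for all $x\ge\beta$. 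Since $\ell(\beta)=\beta\log\beta-\beta+1$ grows like $\beta\log\beta$ while the numerator is linear in $\beta$, we have $\gamma(\beta)\to0$ as $\beta\to\infty$. (If a single function $\gamma$ defined for all $\beta>0$ is wanted, set $\gamma(\beta)\doteq\beta/\ell(\beta)$ for $\beta>1$ and $\gamma(\beta)\doteq 1$ otherwise; the displayed bound is only invoked in the regime $\beta>1$.)

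There is no genuine obstacle: the only mild subtlety is the monotonicity of $h$ on $(1,\infty)$, which is what guarantees that the supremum of $x/\ell(x)$ over $[\beta,\infty)$ is attained at the left endpoint $x=\beta$ and hence yields the sharp constant $\gamma(\beta)=\beta/\ell(\beta)$; everything else is a direct computation. Alternatively, part (a) may simply be quoted from \cite[Lemma 3.1]{BudhirajaDupuisGanguly2015moderate} as indicated in the text, with the argument above recorded for completeness.
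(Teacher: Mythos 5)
Your proof is correct. The paper gives no argument here—it cites \cite[Lemma 3.1]{BudhirajaDupuisGanguly2015moderate} for part (a) and dismisses part (b) as an omitted easy calculation—so your elementary verification (monotonicity of $\ell(x)/x$ on $(1,\infty)$ giving the sharp constant $\gamma(\beta)=\beta/\ell(\beta)$, and convexity of $x\log x-2x+3$ with minimum value $3-e>0$) simply supplies the missing details, and both computations check out.
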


Now we have all the ingredients to study tightness and convergence
properties of controlled processes $\{{\bar{Z}}^n\}$ that are driven by
controls $\{\varphi^n=(\varphi_1^n,\varphi_2^n,\varphi_3^n)\}$ that satisfy %
\eqref{eq:cost_bd_repn}.

\begin{lemma}
\label{lem:tightness} Suppose that for some $M_0 <\infty$, $\{\varphi^n\}
\subset \tilde{\mathcal{A}}_{b,M_0}$.

\begin{enumerate}[(a)]

\item The sequence of random variables $\{(\varphi^n,{\bar{Z}}^n)\}$ is a
tight collection of ${\mathcal{M}} \times {\mathcal{E}}$ valued random
variables.

\item Suppose $(\varphi^n,{\bar{Z}}^n)$ converges along a subsequence, in
distribution, to $(\varphi,{\bar{Z}})$ given on some probability space $%
(\Omega^*,{\mathcal{F}}^*,P^*)$, where ${\bar{Z}} = ({\bar{X}},{\bar{Y}})$.
Then a.s.\ $P^*$, ${\bar{Z}} \in {\mathcal{C}}([0,T]:{\mathbb{R}}_+^2)$ and 
\begin{align}
\bar X(t) &= \Gamma \left(x + \lambda\int_0^{\cdot}\varphi_1(s)\, ds - \mu
\int_0^{\cdot}\varphi_2(s)\, ds - \theta \int_{[0,\cdot]\times {\mathbb{R}}%
_+} {\boldsymbol{1}}_{[0, \bar X(s)]}(z) \varphi_3(s,y) \,ds \,dy \right)
(t),  \label{eq:Xbar_sy} \\
\bar Y(t) &= \theta \int_{[0,t]\times {\mathbb{R}}_+} {\boldsymbol{1}}_{[0,
\bar X(s)]}(z) \varphi_3(s,y)\, ds\, dy, \quad t \in [0,T]
\label{eq:Ybar_sy}
\end{align}
\end{enumerate}
\end{lemma}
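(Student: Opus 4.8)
The plan is to carry this out in the standard two-step fashion for proving Laplace bounds via the weak-convergence approach. For part (a), tightness of $\{\varphi^n\}$ in ${\mathcal{M}}$ is immediate: by hypothesis $\varphi^n(\omega) \in {\mathcal{S}}_{M_0}$ a.s.\ for every $n$, and ${\mathcal{S}}_{M_0}$ (identified via \eqref{eq:nu} with a subset of ${\mathcal{M}}$) is compact, so the laws of $\varphi^n$ are automatically tight. For tightness of $\{{\bar Z}^n\} = \{({\bar X}^n, {\bar Y}^n)\}$ I would first decompose each driving term into its compensator plus a martingale, using the compensated processes ${\tilde N}_i^{n\varphi_i^n}$ introduced above. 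The compensator parts are of the form $\int_0^t \lambda\varphi_1^n(s)\,ds$, etc.; using Lemma 2.7(b), $\varphi_i^n \le \ell(\varphi_i^n) + 2$, together with the bound \eqref{eq:cost_bd_repn}, these are uniformly (in $n$ and $\omega$) absolutely continuous, hence ${\mathcal{C}}$-tight with modulus controlled by $M_0$. For the martingale parts, I would estimate the quadratic variation: e.g.\ $\langle n^{-1}{\tilde N}_1^{n\varphi_1^n}\rangle_t = n^{-1}\lambda\int_0^t \varphi_1^n(s)\,ds$, which again by Lemma 2.7(b) and \eqref{eq:cost_bd_repn} is $O(n^{-1})$ uniformly, so the martingale terms converge to zero uniformly on $[0,T]$ in $L^2$ and are ${\mathcal{C}}$-tight with zero limit. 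Combining, $x_n + n^{-1}N_1^{n\varphi_1^n} - n^{-1}N_2^{n\varphi_2^n} - n^{-1}\int {\boldsymbol 1}_{[0,({\bar X}^n(s-)-1/n)^+]}\,N_3^{n\varphi_3^n}$ is ${\mathcal{C}}$-tight, and then ${\mathcal{C}}$-tightness of ${\bar X}^n$ follows from the Lipschitz continuity of the Skorohod map $\Gamma$ in the uniform topology; ${\bar Y}^n$ is handled by the same PRM-plus-compensator decomposition. Joint tightness of the triple $(\varphi^n, {\bar X}^n, {\bar Y}^n)$ then follows since tightness of marginals gives tightness of the vector in a product of Polish spaces.

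For part (b), suppose along a subsequence $(\varphi^n, {\bar Z}^n) \Rightarrow (\varphi, {\bar Z})$; by Skorohod representation I would assume the convergence is a.s.\ on a common probability space $(\Omega^*, {\mathcal{F}}^*, P^*)$, and ${\mathcal{C}}$-tightness gives ${\bar Z} \in {\mathcal{C}}([0,T]:{\mathbb{R}}_+^2)$ a.s. The goal is to pass to the limit in \eqref{eq:Xbar_n}--\eqref{eq:Ybar_n}. The martingale terms vanish in the limit by the $L^2$ bounds above. For the compensator terms, I would write, e.g., $n^{-1}N_1^{n\varphi_1^n}(t) = \lambda\int_0^t \varphi_1^n(s)\,ds + (\text{mart}) = \nu^{\varphi_1^n}_1([0,t])\cdot\lambda + (\text{mart})$ — more precisely $\lambda\int_0^t\varphi_1^n(s)\,ds = \lambda\,\nu^{\varphi_1^n}_1([0,t])$ — and use that convergence of $\varphi^n \to \varphi$ in ${\mathcal{M}}$ (vague/weak convergence of the associated measures $\nu^{\varphi^n}$) yields convergence of $\int_0^t \varphi_1^n(s)\,ds \to \int_0^t \varphi_1(s)\,ds$ for each $t$ (here one uses that the limiting $\nu^\varphi$ has a density, as guaranteed by membership of the limit in ${\mathcal{S}}_{M_0}$, so the boundary $\{t\}\times{\mathbb{R}}_+$ is null). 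The one genuinely delicate term is the reneging term $n^{-1}\int_{[0,t]\times{\mathbb{R}}_+} {\boldsymbol 1}_{[0,({\bar X}^n(s-)-1/n)^+]}(y)\,N_3^{n\varphi_3^n}(ds\,dy)$, whose compensator is $\theta\int_0^t ({\bar X}^n(s-)-1/n)^+\varphi_3^n(s,y)\,ds$ — wait, more carefully, $\theta\int_{[0,t]\times{\mathbb{R}}_+} {\boldsymbol 1}_{[0,({\bar X}^n(s)-1/n)^+]}(y)\varphi_3^n(s,y)\,ds\,dy$. Here I must show this converges to $\theta\int_{[0,t]\times{\mathbb{R}}_+} {\boldsymbol 1}_{[0,{\bar X}(s)]}(y)\varphi_3(s,y)\,ds\,dy$. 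The obstacle is that this is a product of a weakly converging sequence $\varphi_3^n(s,y)\,ds\,dy$ and a sequence of indicator functions ${\boldsymbol 1}_{[0,({\bar X}^n(s)-1/n)^+]}(y)$ that converges only in a weak/a.e.\ sense (${\bar X}^n \to {\bar X}$ uniformly, but indicators of moving sets do not converge in any strong topology, and the integrand is unbounded in $y$).

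The main obstacle, therefore, is justifying passage to the limit in this reneging term. My approach would be: (i) use the uniform cost bound \eqref{eq:cost_bd_repn} together with Lemma 2.7(a) — for $\beta$ large, $\varphi_3^n(s,y) \le \gamma(\beta)\ell(\varphi_3^n(s,y))$ on $\{\varphi_3^n \ge \beta\}$ — to get uniform integrability of $\varphi_3^n(s,y)\,ds\,dy$ in the region where $\varphi_3^n$ is large, and to truncate the $y$-integration to a compact set $[0, R]$ at a cost that is small uniformly in $n$ (using that ${\bar X}^n$ is bounded, say by $x_0 + \lambda\int_0^T\varphi_1^n + \kappa$, which is itself uniformly bounded by \eqref{eq:cost_bd_repn} and Lemma 2.7(b), so the relevant indicator is supported on $y \le R$ for a deterministic $R$); (ii) on the truncated region, write the difference of integrands as ${\boldsymbol 1}_{[0,({\bar X}^n(s)-1/n)^+]}(y) - {\boldsymbol 1}_{[0,{\bar X}(s)]}(y)$ times $\varphi_3^n$ plus ${\boldsymbol 1}_{[0,{\bar X}(s)]}(y)$ times $(\varphi_3^n - \varphi_3)$; the first piece is controlled because $|({\bar X}^n(s)-1/n)^+ - {\bar X}(s)| \le |{\bar X}^n - {\bar X}|_{*,T} + 1/n \to 0$ uniformly, so the symmetric difference of the two intervals has Lebesgue measure $\to 0$ uniformly in $s$, and $\varphi_3^n$ restricted to $[0,R]$ is uniformly integrable in $(s,y)$; the second piece vanishes by the weak convergence $\nu^{\varphi_3^n}_3 \to \nu^{\varphi_3}_3$ tested against the bounded a.e.-continuous function $(s,y) \mapsto {\boldsymbol 1}_{[0,{\bar X}(s)]}(y)$ (continuity of ${\bar X}$ is what makes the indicator a.e.\ continuous on $[0,T]\times[0,R]$). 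A standard subsequence argument along a common dense set of $t$ then upgrades pointwise-in-$t$ convergence to convergence in ${\mathcal{D}}([0,T])$, and finally applying $\Gamma$ (Lipschitz in the uniform norm) identifies ${\bar X}$ as in \eqref{eq:Xbar_sy} and ${\bar Y}$ as in \eqref{eq:Ybar_sy}. Note that in \eqref{eq:Xbar_sy}--\eqref{eq:Ybar_sy} the indicator is written ${\boldsymbol 1}_{[0,{\bar X}(s)]}(z)$ and integrated against $\varphi_3(s,y)\,ds\,dy$, so one reads off that the effective reneging intensity is $\theta\,{\bar X}(s)\,\bar\varphi_3(s)$ with $\bar\varphi_3(s) \doteq \int_0^\infty \varphi_3(s,y)\,dy$ interpreted appropriately — this matches the structure of $I_T$ in \eqref{eq:I_T} after the standard reduction to $y$-independent controls.
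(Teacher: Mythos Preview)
Your approach is essentially the paper's: the same compensator-plus-martingale decomposition, martingale terms vanishing via quadratic-variation bounds, and the same two-piece splitting of the reneging compensator in part (b) (the paper uses exactly your split, invoking Lemma~\ref{lem:property_ell}(a) for the first piece and the fact that $\{(s,y):y=\bar X(s)\}$ has Lebesgue measure zero for the second).

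There is, however, one real circularity in your tightness argument that you do not resolve. You assert the compensators are ``uniformly (in $n$ and $\omega$) absolutely continuous'' from the cost bound alone, but for the reneging term Lemma~\ref{lem:property_ell}(b) only yields
\[
\int_0^{(\bar X^n(s)-1/n)^+}\varphi_3^n(s,y)\,dy \;\le\; \int_{\mathbb R_+}\ell(\varphi_3^n(s,y))\,dy + 2\bar X^n(s),
\]
so you need an a~priori bound on $|\bar X^n|_{*,T}$ \emph{before} you can control this compensator; the same issue afflicts the quadratic variation of the reneging martingale. Your later claim that $\bar X^n$ is bounded by $x_0+\lambda\int_0^T\varphi_1^n+\kappa$ is not a pathwise bound (the PRM $n^{-1}N_1^{n\varphi_1^n}$ differs from its compensator by a nontrivial martingale), so it does not close the loop. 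The paper breaks this circularity by a Gronwall argument on $E|\bar X^n|_{*,t}^2$ (Doob's inequality for the martingale pieces, Lemma~\ref{lem:property_ell}(b) for the compensators) to obtain $\sup_n E|\bar X^n|_{*,T}^2<\infty$, and then establishes tightness of the fluctuations via the Aldous--Kurtz criterion rather than via uniform absolute continuity. With that moment bound in hand, the rest of your plan, including all of part (b), goes through as written.
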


\begin{proof}
	By assumption $\{\varphi^n\}$ is a sequence of $\cls_{M_0}$ valued random variables and is therefore automatically tight. 
	Next, we argue $\Cmc$-tightness of $\{\bar Z^n\}$.
	Let $\Xtil^n(t)$ be the process appearing in the argument of $\Gamma$ in \eqref{eq:Xbar_n}, namely
	\begin{equation}
		\label{eq:Xtil_n}
		\Xtil^n(t) \doteq x_n + \frac{1}{n}  N_1^{n\varphi_1^n}(t)
			- \frac{1}{n} N_2^{n\varphi_2^n}(t) 
			- \frac{1}{n} \int_{[0,t]\times \R_+}  \one_{\left[0, \left(\Xbar^n(s-) - \frac{1}{n}\right)^+\right]}(y) \, N_3^{n\varphi_3^n}(ds\,dy).
	\end{equation}
	We will argue tightness of $\{\Xtil^n\}$ and get tightness of $\{\Xbar^n\}$ by continuity of the Skorohod map $\Gamma$.
	Write
	\begin{align*}
		\frac{1}{n}  N_1^{n\varphi_1^n}(t)  = \la\int_0^t \varphi_1^n(s)\,ds + \frac{1}{n}  \Ntil_1^{n\varphi_1^n}(t),\;\; &
		\frac{1}{n}  N_2^{n\varphi_2^n}(t)  = \mu\int_0^t \varphi_2^n(s)\,ds + \frac{1}{n}  \Ntil_2^{n\varphi_2^n}(t), \\
		\frac{1}{n} \int_{[0,t]\times \R_+}  \one_{\left[0, \left(\Xbar^n(s-) - \frac{1}{n}\right)^+\right]}(y) \, N_3^{n\varphi_3^n}(ds\,dy) & = \theta \int_{[0,t]\times \R_+} \one_{\left[0, \left(\Xbar^n(s) - \frac{1}{n}\right)^+\right]}(y) \varphi_3^n(s,y)\,ds\,dy \\
		& \quad + \frac{1}{n} \int_{[0,t]\times \R_+}  \one_{\left[0, \left(\Xbar^n(s-) - \frac{1}{n}\right)^+\right]}(y) \, \Ntil_3^{n\varphi_3^n}(ds\,dy).
	\end{align*}
	From \eqref{eq:Xbar_n} we have
	\begin{align*}
		E |\Xbar^n|_{*,t}^2 & \le \kappa_1 E |\Xtil^n|_{*,t}^2 \\
		& \le \kappa_2 + \kappa_2 E \left[ \int_0^t \left( \varphi_1^n(s) + \varphi_2^n(s) + \int_{\R_+} \one_{[0, \bar X^n(s)]}(y) \varphi_3^n(s,y)\,dy \right) ds \right]^2 \\
		& \qquad + \frac{\kappa_2}{n} E \int_0^t \left( \varphi_1^n(s) + \varphi_2^n(s) + \int_{\Rmb_+} \one_{[0, \Xbar^n(s)]}(y) \varphi_3^n(s,y) \,dy \right) ds \\
		& \le \kappa_2 + \kappa_2 E \left[ \int_0^t \left( \ell(\varphi_1^n(s))+2 + \ell(\varphi_2^n(s))+2 + \int_{\R_+} \one_{[0, \bar X^n(s)]}(y) (\ell(\varphi_3^n(s,y))+2)\,dy \right) ds \right]^2 \\
		& \qquad + \frac{\kappa_2}{n} E \int_0^t \left( \ell(\varphi_1^n(s))+2 + \ell(\varphi_2^n(s))+2 + \int_{\Rmb_+} \one_{[0, \Xbar^n(s)]}(y) (\ell(\varphi_3^n(s,y))+2) \,dy \right) ds \\
		& \le \kappa_3 + \kappa_3 \int_0^t E |\Xbar^n|_{*,s}^2 \,ds,
	\end{align*}
	where the first inequality uses the explicit expression for the Skorohod map $\Gamma$ in (\ref{eq:SM}), the second uses Doob's inequality, the third uses Lemma \ref{lem:property_ell}(b), and the last uses the fact that \eqref{eq:cost_bd_repn} is satisfied.
	It then follows from Gronwall's inequality that
	\begin{equation}
		\label{eq:Xbar_n_moment_bd}
		\sup_{n \in \Nmb} E |\Xbar^n|_{*,T}^2 < \infty, \quad \sup_{n \in \Nmb} E |\Xtil^n|_{*,T}^2 < \infty.
	\end{equation}
	This gives tightness of $\{\Xtil^n(t)\}$ for each fixed $t \in [0,T]$.
	As for the fluctuation of $\Xtil^n$, let $\mathcal{T}^\delta$ be the collection of all $[0,T-\delta]$ valued stopping times $\tau$ for each $\delta \in [0,T]$.
	In order to argue tightness of $\{\Xtil^n\}$,
	by the Aldous--Kurtz tightness criterion (cf.\ \cite[Theorem 2.7]{Kurtz1981approximation}) it suffices to show that
	\begin{equation}
		\label{eq:tightness_fluctuation}
		\limsup_{\delta \to 0} \limsup_{n \to \infty} \sup_{\tau \in \mathcal{T}^\delta} E|\Xtil^n(\tau+\delta) - \Xtil^n(\tau)| = 0.
	\end{equation}
	From \eqref{eq:Xtil_n} it follows that for every $M \in (0,\infty)$,
	\begin{align*}
		& E|\Xtil^n(\tau+\delta) - \Xtil^n(\tau)| \\
		& \le \kappa_4 E \int_\tau^{\tau+\delta} \left( \varphi_1^n(s) + \varphi_2^n(s) + \int_{\R_+} \one_{[0, \Xbar^n(s)]}(y) \varphi_3^n(s,y) \, dy \right) ds \\
		& \le \kappa_4 E \int_\tau^{\tau+\delta} \left( \varphi_1^n(s) \one_{\{\varphi_1^n(s)>M\}} + \varphi_2^n(s) \one_{\{\varphi_2^n(s)>M\}} + \int_{\R_+} \one_{[0, \Xbar^n(s)]}(y) \varphi_3^n(s,y) \one_{\{\varphi_3^n(s)>M\}} \, dy \right) ds \\
		& \qquad + \kappa_4 E \int_\tau^{\tau+\delta} \left( M + M + \int_{\R_+} \one_{[0, \Xbar^n(s)]}(y) M \, dy \right) ds \\
		& \le \kappa_5 \gamma(M) + \kappa_5 M \delta
	\end{align*}
	where the third inequality follows from Lemma \ref{lem:property_ell}(a), \eqref{eq:cost_bd_repn} and \eqref{eq:Xbar_n_moment_bd}.
	Therefore
	\begin{equation*}
		\limsup_{\delta \to 0} \limsup_{n \to \infty} \sup_{\tau \in \mathcal{T}^\delta} E|\Xtil^n(\tau+\delta) - \Xtil^n(\tau)| \le \kappa_5 \gamma(M).
	\end{equation*}
	Taking $M \to \infty$ gives \eqref{eq:tightness_fluctuation}.
	Combining this with \eqref{eq:Xbar_n_moment_bd} and \eqref{eq:Xbar_n} we have tightness of $\{(\Xtil^n,\bar X^n)\}$. 
	Similar estimate gives tightness of $\{\bar Y^n\}$.
	The $\Cmc$-tightness of $\{(\Xtil^n,\Zbar^n)\}$ is clear as its jump size is  $O(\frac{1}{n})$.
	
	Suppose now that $(\varphi^n, \Xtil^n,\Zbar^n)$ converge in distribution, along a subsequence, in  $\Mmc \times \Dmc([0,T]:\Rmb) \times \Emc$,
	to $(\varphi , \Xtil, \Zbar)$ given on some probability space $(\Omega^*,\Fmc^*,P^*)$, where $\Zbar=(\Xbar,\Ybar)$.
	Assume without loss of generality that the convergence holds along the whole sequence.
	From the $\Cmc$-tightness of $\{(\Xtil^n,\Zbar^n)\}$ we  have $(\Xtil,\Zbar) \in \Cmc([0,T]:\Rmb) \times \Cmc([0,T]:\Rmb_+^2)$ a.s.\ $P^*$.
	Using Doob's inequality, Lemma \ref{lem:property_ell}(b), \eqref{eq:cost_bd_repn} and \eqref{eq:Xbar_n_moment_bd} we have	
	\begin{align*}
		& E\left|\left(\frac{1}{n} \Ntil_1^{n\varphi_1^n}(\cdot)\right)^2 + \left(\frac{1}{n} \Ntil_2^{n\varphi_2^n}(\cdot)\right)^2 + \left(\frac{1}{n} \int_{[0,\cdot]\times \R_+}  \one_{\left[0, \left(\Xbar^n(s-) - \frac{1}{n}\right)^+\right]}(y) \, \Ntil_3^{n\varphi_3^n}(ds\,dy)\right)^2 \right|_{*,T} \\
		& \le \frac{\kappa_6}{n} E \int_0^T \left( \varphi_1^n(s) + \varphi_2^n(s) + \int_{\Rmb_+} \one_{[0, \Xbar^n(s)]}(y) \varphi_3^n(s,y) \,dy \right) ds \to 0
	\end{align*}
	as $n\to \infty$, where for $f:[0,T]\to \mathbb{R}$, $|f|_{*,T}\doteq \sup_{0\le s \le T}|f(s)|$.
	By appealing to the Skorohod representation theorem, we can further assume without loss of generality that the above convergence, $(\varphi^n, \Xtil^n,\Zbar^n) \to (\varphi , \Xtil, \Zbar)$ and $(\Xtil,\Zbar) \in \Cmc([0,T]:\Rmb) \times \Cmc([0,T]:\Rmb_+^2)$ hold a.s.\ $P^*$, namely on some set $G \in \Fmc^*$ such that $P^*(G^c)=0$.
	Fix $\omega^* \in G$.
	The rest of the argument will be made for such an $\omega^*$ which will be suppressed from the notation.

	In order to prove \eqref{eq:Xbar_sy} and \eqref{eq:Ybar_sy}, it suffices to show
	\begin{align}
		\int_0^t \varphi_1^n(s)\, ds + \int_0^t \varphi_2^n(s)\, ds & \to \int_0^t \varphi_1(s)\, ds + \int_0^t \varphi_2(s)\, ds, \label{eq:char_pf1} \\
		\int_{[0,t]\times \R_+} \one_{\left[0, \left(\Xbar^n(s-) - \frac{1}{n}\right)^+\right]}(y) \varphi_3^n(s,y)\,ds\,dy & \to \int_{[0,t]\times \R_+} \one_{[0, \bar X(s)]}(y) \varphi_3(s,y)\,ds\,dy, \label{eq:char_pf2}
	\end{align}
	as $n \to \infty$ for each $t \in [0,T]$.
	The convergence in \eqref{eq:char_pf1} is immediate from that of $\varphi^n \to \varphi \in \Mmc$.
	Next note that 
	\begin{align*}
		& \left| \int_{[0,t]\times \R_+} \one_{\left[0, \left(\Xbar^n(s-) - \frac{1}{n}\right)^+\right]}(y) \varphi_3^n(s,y)\,ds\,dy - \int_{[0,t]\times \R_+} \one_{[0, \bar X(s)]}(y) \varphi_3(s,y)\,ds\,dy \right| \\
		& \le \int_{[0,t]\times \R_+} \left| \one_{\left[0, \left(\Xbar^n(s-) - \frac{1}{n}\right)^+\right]}(y) - \one_{[0, \bar X(s)]}(y) \right| \varphi_3^n(s,y)\,ds\,dy \\
		& \quad + \left| \int_{[0,t]\times \R_+} \one_{[0, \bar X(s)]}(y) \left( \varphi_3^n(s,y) -  \varphi_3(s,y) \right) ds\,dy \right|.
	\end{align*}
	For $M \in (0,\infty)$, it follows from Lemma \ref{lem:property_ell}(a) and \eqref{eq:cost_bd_repn} that
	\begin{align*}
		& \int_{[0,t]\times \R_+} \left| \one_{\left[0, \left(\Xbar^n(s-) - \frac{1}{n}\right)^+\right]}(y) - \one_{[0, \bar X(s)]}(y) \right| \varphi_3^n(s,y)\,ds\,dy \\
		& \le \int_{[0,t]\times \R_+} \varphi_3^n(s,y) \one_{\{\varphi_3^n(s,y) > M\}} \,ds\,dy + \int_{[0,t]\times \R_+} \left| 
		\one_{\left[0, \left(\Xbar^n(s-) - \frac{1}{n}\right)^+\right]}(y) - \one_{[0, \bar X(s)]}(y) \right| M \,ds\,dy \\
		& \le \gamma(M) M_0 + MT |\bar X^n - \bar X|_{*,T} + MT/n,
	\end{align*}
	which converges to zero on sending $n\to \infty$ and then $M \to \infty$.
	Since the Lebesgue measure of $\{(s,y) \in [0,T]\times\Rmb_+ : y=\Xbar(s)\}$ is zero, it follows from $\varphi^n \to \varphi \in \Mmc$ that
	$$\left| \int_{[0,t]\times \R_+} \one_{[0, \bar X(s)]}(y) \left( \varphi_3^n(s,y) -  \varphi_3(s,y) \right) ds\,dy \right| \to 0$$
	as $n \to \infty$.
	Combining above three displays gives \eqref{eq:char_pf2}.
	This completes the proof.
\end{proof}

\subsection{Proof of Laplace upper bound.}
\label{sec:lapuppaband}

In this section we prove \eqref{eq:lapuppaband} for a fixed $h \in C_b(%
\mathcal{E})$. We begin by giving an alternative representation of $I_T$
that will be convenient for the proof of this inequality. For $(\xi, \zeta)
\in \mathcal{C}([0,T]: {\mathbb{R}}_+^2)$, let $\tilde{\mathcal{U}}(\xi,
\zeta) $ be the collection of all measurable $\tilde \varphi = (\tilde
\varphi_1, \tilde \varphi_2, \tilde \varphi_3)$ such that $\tilde \varphi_i
\colon [0,T] \to {\mathbb{R}}_+$, $i=1,2$, $\tilde \varphi_3 \colon
[0,T]\times {\mathbb{R}}_+ \to {\mathbb{R}}_+$ and the following equations
are satisfied for $t \in [0,T]$: 
\begin{align*}
\xi(t) &= \Gamma\left (x_0 + \lambda\int_0^{\cdot} \tilde \varphi_1(s) \,ds
- \mu \int_0^{\cdot} \tilde \varphi_2(s)\, ds - \theta \int_{[0,\cdot]\times 
{\mathbb{R}}_+} {\tilde{\varphi}}_3(s, y) {\boldsymbol{1}}_{[0,\xi(s)]}(y)
\,ds\, dy\right)(t), \\
\zeta(t) &= \theta \int_{[0,t]\times {\mathbb{R}}_+} {\tilde{\varphi}}_3(s,
y) {\boldsymbol{1}}_{[0,\xi(s)]}(y)\, ds\, dy .
\end{align*}
Define for $(\xi, \zeta) \in \mathcal{C}([0,T]: {\mathbb{R}}_+^2)$ 
\begin{equation}\label{eq:ratefn2}
\tilde I_T(\xi, \zeta) = \inf_{\tilde \varphi \in \tilde{\mathcal{U}}(\xi,
\zeta)}\left\{ \lambda\int_0^T \ell(\tilde \varphi_1(s)) \,ds + \mu\int_0^T
\ell(\tilde \varphi_2(s)) \,ds + \theta\int_{[0,T]\times {\mathbb{R}}_+}
\ell(\tilde \varphi_3(s,y)) \,ds \,dy\right\}.
\end{equation}
We set $\tilde I_T(\xi, \zeta) \doteq \infty$ if $\tilde{\mathcal{U}}(\xi,
\zeta)$ is empty or $(\xi, \zeta) \in \mathcal{D}([0,T]: {\mathbb{R}}%
_+^2)\setminus \mathcal{C}([0,T]: {\mathbb{R}}_+^2)$. 
\begin{remark}
	\label{rem:abscty}
	{Note that if for $(\xi, \zeta) \in \mathcal{C}([0,T]: {\mathbb{R}}_+^2)$, $\tilde I_T(\xi, \zeta) < \infty$ and
	$\tilde \varphi \in \tilde{\mathcal{U}}(\xi,
\zeta)$ is such that its cost given by the right side in \eqref{eq:ratefn2} is finite, then from the superlinearity of $\ell$ we must have that
$$\int_0^T \tilde \varphi_i(s) ds < \infty, \; i= 1,2, \mbox{ and } \int_{[0,T]\times [0,M]}
\tilde \varphi_3(s,y) \,ds \,dy < \infty \mbox{ for all } M <\infty.$$
This says that 
$\zeta$ is absolutely continuous and $\xi = \Gamma(\check \xi)$ for some $\check \xi$ which is absolutely continuous as well.
It follows that $t \mapsto \inf_{0\le s \le t} \check \xi(s)\wedge 0$ is also absolutely continuous. Thus we have that
$\zeta, \xi$ and $\xi - \check \xi$ are all absolutely continuous. This fact will be used in the proof of Proposition \ref{prop:uniq}.
}
\end{remark}

Then we have the
following result.

\begin{lemma}
\label{lem:iitil} For all $T\ge 0$ and $(\xi, \zeta) \in \mathcal{D}([0,T]: {%
\mathbb{R}}_+^2)$, $I_T(\xi, \zeta) = \tilde I_T(\xi, \zeta)$.
\end{lemma}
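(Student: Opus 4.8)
The plan is to show the two functionals agree by proving the inequality $I_T \le \tilde I_T$ and $\tilde I_T \le I_T$ separately; in both directions one starts with an admissible control attaining (or nearly attaining) the infimum in one problem and manufactures from it an admissible control for the other with no larger cost. The essential observation is that in \eqref{eq:eqxizet} the third control $\varphi_3$ enters only through the \emph{product} $\varphi_3(s)\xi(s)$, whereas in $\tilde{\mathcal{U}}(\xi,\zeta)$ the control $\tilde\varphi_3(s,y)$ is a function of two variables entering through $\int_{{\mathbb{R}}_+}{\boldsymbol 1}_{[0,\xi(s)]}(y)\tilde\varphi_3(s,y)\,dy$. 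So the two representations are related by an averaging/lifting in the $y$-variable over the interval $[0,\xi(s)]$, and the whole lemma is really a statement that averaging $\ell$ over $y$ is optimized by taking $\tilde\varphi_3$ constant in $y$ — this is just convexity of $\ell$ (Jensen).

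For the direction $\tilde I_T \le I_T$: given $\varphi=(\varphi_1,\varphi_2,\varphi_3)\in\mathcal{U}(\xi,\zeta)$, define $\tilde\varphi_1\doteq\varphi_1$, $\tilde\varphi_2\doteq\varphi_2$, and $\tilde\varphi_3(s,y)\doteq\varphi_3(s)$ for all $y$ (independent of $y$). One checks directly that the defining equations for $\tilde{\mathcal{U}}(\xi,\zeta)$ hold, since $\int_{{\mathbb{R}}_+}{\boldsymbol 1}_{[0,\xi(s)]}(y)\tilde\varphi_3(s,y)\,dy = \xi(s)\varphi_3(s)$, matching the $\theta\int_0^{\cdot}\varphi_3(s)\xi(s)\,ds$ term. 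The cost comparison is an equality on the first two terms, and for the third, $\theta\int_{[0,T]\times{\mathbb{R}}_+}\ell(\tilde\varphi_3(s,y))\,ds\,dy = \theta\int_0^T\int_0^{\xi(s)}\ell(\varphi_3(s))\,dy\,ds = \theta\int_0^T\xi(s)\ell(\varphi_3(s))\,ds$, which is exactly the third term of $I_T$. Here one must be slightly careful if $\xi(s)=0$ on a set of positive measure: then in the $I_T$ cost the term $\xi(s)\ell(\varphi_3(s))$ vanishes regardless of $\varphi_3(s)$, and correspondingly one may set $\tilde\varphi_3(s,\cdot)\equiv 1$ (the minimizer of $\ell$) on $\{s:\xi(s)=0\}$, contributing zero; this matching of "free" values on the zero set is the one bookkeeping point to handle explicitly.

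For the reverse direction $I_T \le \tilde I_T$: given $\tilde\varphi=(\tilde\varphi_1,\tilde\varphi_2,\tilde\varphi_3)\in\tilde{\mathcal{U}}(\xi,\zeta)$ with finite cost, set $\varphi_1\doteq\tilde\varphi_1$, $\varphi_2\doteq\tilde\varphi_2$, and on $\{s:\xi(s)>0\}$ define $\varphi_3(s)\doteq\frac{1}{\xi(s)}\int_0^{\xi(s)}\tilde\varphi_3(s,y)\,dy$ (and $\varphi_3(s)\doteq 1$ where $\xi(s)=0$). Then $\varphi_3(s)\xi(s)=\int_{{\mathbb{R}}_+}{\boldsymbol 1}_{[0,\xi(s)]}(y)\tilde\varphi_3(s,y)\,dy$, so $\varphi\in\mathcal{U}(\xi,\zeta)$, and by Jensen's inequality applied to the convex function $\ell$ with the normalized measure $\frac{1}{\xi(s)}{\boldsymbol 1}_{[0,\xi(s)]}(y)\,dy$,
\[
\xi(s)\,\ell(\varphi_3(s)) = \xi(s)\,\ell\!\left(\frac{1}{\xi(s)}\int_0^{\xi(s)}\tilde\varphi_3(s,y)\,dy\right) \le \int_0^{\xi(s)}\ell(\tilde\varphi_3(s,y))\,dy \le \int_{{\mathbb{R}}_+}\ell(\tilde\varphi_3(s,y))\,dy.
\]
Integrating in $s$ and adding the (equal) first two terms gives $I_T(\xi,\zeta)\le$ cost of $\tilde\varphi$; taking the infimum over $\tilde\varphi$ yields $I_T\le\tilde I_T$. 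One also needs to note measurability of $s\mapsto\varphi_3(s)$, which follows from Fubini/Tonelli, and that the case $\mathcal{U}(\xi,\zeta)$ or $\tilde{\mathcal{U}}(\xi,\zeta)$ empty, or $(\xi,\zeta)\notin\mathcal{C}$, is handled trivially since then both sides are $+\infty$ by the above constructions (an admissible control for one produces one for the other). The main obstacle, such as it is, is purely the careful handling of the set $\{\xi=0\}$ and the measurability of the averaged control; the core inequality is an immediate application of Jensen's inequality together with the algebraic identity $\int_0^{\xi(s)}\tilde\varphi_3(s,y)\,dy = \xi(s)\varphi_3(s)$.
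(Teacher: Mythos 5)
Your overall strategy coincides with the paper's: Jensen's inequality (convexity of $\ell$) for the direction $I_T \le \tilde I_T$, and a lifting of the one-variable control $\varphi_3$ to a two-variable control for the reverse direction. The Jensen direction is correct as written, and your explicit treatment of measurability of the averaged control and of the set $\{s:\xi(s)=0\}$ is a welcome addition to what the paper leaves implicit.

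However, the direction $\tilde I_T \le I_T$ contains a concrete error. You define $\tilde\varphi_3(s,y)\doteq\varphi_3(s)$ for \emph{all} $y\in{\mathbb{R}}_+$, but the cost in $\tilde I_T$ integrates $\ell(\tilde\varphi_3(s,y))$ over all of $[0,T]\times{\mathbb{R}}_+$, not only over $\{(s,y):y\le\xi(s)\}$. With your choice, $\int_{{\mathbb{R}}_+}\ell(\tilde\varphi_3(s,y))\,dy=\int_0^\infty\ell(\varphi_3(s))\,dy=+\infty$ whenever $\varphi_3(s)\ne 1$, so the displayed identity
\[
\theta\int_{[0,T]\times{\mathbb{R}}_+}\ell(\tilde\varphi_3(s,y))\,ds\,dy=\theta\int_0^T\int_0^{\xi(s)}\ell(\varphi_3(s))\,dy\,ds
\]
is false for that construction, and the lifted control generically has infinite cost. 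The repair is immediate and is what the paper intends: set $\tilde\varphi_3(s,y)=\varphi_3(s)$ for $y\in[0,\xi(s)]$ and $\tilde\varphi_3(s,y)=1$ (the unique zero of $\ell$) for $y>\xi(s)$. Then $\int_{[0,\xi(s)]}\tilde\varphi_3(s,y)\,dy=\xi(s)\varphi_3(s)$, so admissibility is preserved, and $\int_{{\mathbb{R}}_+}\ell(\tilde\varphi_3(s,y))\,dy=\xi(s)\ell(\varphi_3(s))$, so the two costs coincide exactly. Your remark about setting $\tilde\varphi_3\equiv 1$ on $\{s:\xi(s)=0\}$ addresses only the degenerate time set, not the region $y>\xi(s)>0$ where the same problem occurs.
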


\begin{proof}
	Consider $T\ge 0$ and for $(\xi, \zeta) \in \calC([0,T]: \R_+^2)$, $\tilde \varphi = (\tilde \varphi_1, \tilde \varphi_2, \tilde \varphi_3)\in \tilde \calU(\xi, \zeta)$.
	Define
	$\varphi_3:[0,T] \to \R_+$ by
	$$
	\varphi_3(s) \doteq \one_{\{\xi(s)\neq 0\}} \frac{1}{\xi(s)} \int_{[0,\xi(s)]}\tilde \varphi_3(s,y)\, dy + \one_{\{\xi(s) = 0\}}, \quad s \in [0,T].$$
	Note that
	$$\int_{[0,t]\times \R_+} \tilde \varphi_3(s,y) \one_{[0,\xi(s)]}(y) \,ds \,dy
	= \int_{[0,t]}  \xi(s)\varphi_3(s)\, ds \mbox{ for all } t \in [0,T]$$
	and, by convexity of $\ell$,
	{\begin{align*}
		\int_{[0,T]\times \R_+} \ell(\tilde \varphi_3(s,y)) \,ds \,dy & \ge \int_{[0,T]\times \R_+} \one_{\{\xi(s)\neq 0\}} \one_{[0,\xi(s)]}(y) \ell(\tilde \varphi_3(s,y)) \,ds \,dy \\
		& = \int_{[0,T]} \one_{\{\xi(s)\neq 0\}} \xi(s) \left( \frac{1}{\xi(s)} \int_{[0,\xi(s)]} \ell(\tilde \varphi_3(s,y)) \,dy \right) ds  \\
		& \ge \int_{[0,T]} \one_{\{\xi(s)\neq 0\}} \xi(s)\ell \left( \frac{1}{\xi(s)} \int_{[0,\xi(s)]} \tilde \varphi_3(s,y) \,dy \right) ds \\
		& = \int_{[0,T]} \xi(s)\ell(\varphi_3(s))\,ds.
	\end{align*}}
	Clearly $\varphi \doteq  (\tilde \varphi_1, \tilde \varphi_2,  \varphi_3) \in \calU(\xi, \zeta)$.
	This shows $I_T(\xi, \zeta) \leq \tilde I_T(\xi, \zeta)$. 
	The reverse inequality is immediate on observing that if $( \varphi_1, \varphi_2,  \varphi_3) \in \calU(\xi, \zeta)$ then, with $\tilde \varphi_3(s,y)\doteq \varphi_3(s)$ for $y\in [0,1]$,
	$( \varphi_1,  \varphi_2,  \tilde\varphi_3) \in \tilde\calU(\xi, \zeta)$ and the costs for the two controls are identical.
	The result follows.
\end{proof}

We now return to the proof of \eqref{eq:lapuppaband}. For each $n$, let $%
\varphi ^{n}\doteq (\varphi _{1}^{n},\varphi _{2}^{n},\varphi _{3}^{n})$ be $%
\frac{1}{n}$-optimal in \eqref{eq:mainrepn}, namely, 
\begin{align}
& -\frac{1}{n}\log E\exp \left\{ -nh(Z^{n})\right\}  \notag \\
& \quad \geq E\left[ h(\bar{Z}^{n})+\int_{0}^{T}\left( \lambda \ell (\varphi
_{1}^{n}(s))+\mu \ell (\varphi _{2}^{n}(s))+\theta \int_{{\mathbb{R}}%
_{+}}\ell (\varphi _{3}^{n}(s,y))\,dy\right) ds\right] -\frac{1}{n},
\label{eq:vepsbd}
\end{align}%
where $\bar{Z}^{n}$ is as introduced below \eqref{eq:mainrepn}. Using the
boundedness of $h$, by a standard
localization
argument (see \cite[Proof of Theorem 4.2]{BudhirajaDupuisMaroulas2011variational}), we can assume without loss of generality that %
\eqref{eq:cost_bd_repn} is satisfied for some $M_{0}<\infty $.

We have from Lemma \ref{lem:tightness}(a) that $\{(\varphi ^{n},{\bar{Z}}%
^{n})\}$ is tight in ${\mathcal{M}}\times {\mathcal{E}}$. Assume without
loss of generality that $(\varphi ^{n},{\bar{Z}}^{n})$ converges along the
whole sequence, in distribution, to $(\varphi ,{\bar{Z}})\in {\mathcal{M}}%
\times {\mathcal{E}}$ given on some probability space $(\Omega ^{\ast },{%
\mathcal{F}}^{\ast },P^{\ast })$, where ${\bar{Z}}=({\bar{X}},{\bar{Y}})$.
By Lemma \ref{lem:tightness}(b) we have ${\bar{Z}}\in {\mathcal{C}}([0,T]:{%
\mathbb{R}}_{+}^{2})$ and $\varphi \in \tilde{\mathcal{U}}({\bar{X}},{\bar{Y}%
})$ a.s.\ $P^{\ast }$. From \eqref{eq:vepsbd}, Fatou's lemma and the lower semicontinuity established in 
 \cite[Lemma A.1]{BudhirajaChenDupuis2013large} (see proof of (A.1) therein)
it follows that 
\begin{align}
& \liminf_{n\rightarrow \infty }-\frac{1}{n}\log E\exp \left\{
-nh(Z^{n})\right\}  \notag \\
& \quad \geq E\left[ h(\bar{Z})+\int_{0}^{T}\left( \lambda \ell (\varphi
_{1}(s))+\mu \ell (\varphi _{2}(s))+\theta \int_{{\mathbb{R}}_{+}}\ell
(\varphi _{3}(s,y))\,dy\right) ds\right] .  \label{eq:upper_pf1}
\end{align}

Thus we have that 
\begin{align*}
\liminf_{n \to \infty} -\frac{1}{n}\log E\exp \left\{ -nh(Z^{n})\right\} & \geq \inf_{\phi
\in \mathcal{E}}E\left[ h(\phi )+\tilde{I}_{T}(\phi )\right] \\
& =\inf_{\phi \in \mathcal{E}}E\left[ h(\phi )+I_{T}(\phi )\right] ,
\end{align*}%
where the last equality is from Lemma \ref{lem:iitil}. The proof of the
upper bound is complete. \hfill \qed

\subsection{Proof of Laplace lower bound.}
\label{sec:laplowbd}

In this section we prove the inequality \eqref{eq:laplowaband} for a fixed $%
h \in C_b({\mathcal{E}})$.

We begin by establishing a key uniqueness property.

\begin{proposition}
\label{prop:uniq} Fix $(\xi, \zeta) \in \mathcal{C}([0,T]: {\mathbb{R}}_+^2) 
$ with ${\tilde{I}}_T(\xi, \zeta)<\infty$ and $\varphi \in \tilde{\mathcal{U}}%
(\xi, \zeta)$ be such that
{the cost associated with $\varphi$ given by the right side of \eqref{eq:ratefn2} is finite}. If $(\tilde \xi, \tilde \zeta) \in \mathcal{C}([0,T]: {%
\mathbb{R}}_+^2)$ is another pair such that $\varphi \in \tilde{\mathcal{U}}%
(\tilde \xi, \tilde \zeta)$,  then $(\xi, \zeta) = (\tilde \xi, \tilde \zeta) 
$.
\end{proposition}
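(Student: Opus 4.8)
The plan is to reduce the statement to the uniqueness of $\xi$ and then to prove that uniqueness by an energy estimate that exploits a one-sided Lipschitz (monotonicity) property of the state dynamics.

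First I would note that the second equation defining $\tilde{\mathcal U}$ determines $\zeta$ as a functional of $\xi$ alone, namely $\zeta(t)=\theta\int_{[0,t]\times\mathbb{R}_+}\varphi_3(s,y)\one_{[0,\xi(s)]}(y)\,ds\,dy$, with the identical relation for $(\tilde\xi,\tilde\zeta)$; hence it suffices to prove $\xi=\tilde\xi$. Write $G_t(a)\doteq\int_0^a\varphi_3(t,y)\,dy$, which is jointly measurable and nondecreasing in $a$, so that $\int_{[0,t]\times\mathbb{R}_+}\varphi_3(s,y)\one_{[0,\xi(s)]}(y)\,ds\,dy=\int_0^t G_s(\xi(s))\,ds$. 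Since $\xi,\zeta,\tilde\xi,\tilde\zeta$ are real-valued and continuous, the first equation of $\tilde{\mathcal U}$ forces $\varphi_1,\varphi_2\in L^1([0,T])$ (otherwise the argument of $\Gamma$ would not be real-valued), while the second gives $\int_0^T G_s(\xi(s))\,ds=\zeta(T)/\theta<\infty$ and likewise $\int_0^T G_s(\tilde\xi(s))\,ds<\infty$; we may assume $\theta>0$, the case $\theta=0$ being trivial ($\zeta\equiv0$ and $\xi=\Gamma(x_0+\lambda\int_0^\cdot\varphi_1(s)\,ds-\mu\int_0^\cdot\varphi_2(s)\,ds)$, which carries no self-reference). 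Setting $\Phi(t)\doteq x_0+\int_0^t(\lambda\varphi_1(s)-\mu\varphi_2(s))\,ds-\theta\int_0^t G_s(\xi(s))\,ds$ and defining $\tilde\Phi$ with $\xi$ replaced by $\tilde\xi$, the two systems read $\xi=\Gamma(\Phi)$ and $\tilde\xi=\Gamma(\tilde\Phi)$ with $\Phi,\tilde\Phi\in\mathcal{C}([0,T]:\mathbb{R})$.

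Next I would use the explicit Skorohod-map formula \eqref{eq:SM}: $\xi=\Phi+L$ with $L(t)\doteq-\inf_{0\le s\le t}(\Phi(s)\wedge0)$, which (since $\Phi$ is continuous and $\Phi(0)=x_0\ge0$) is continuous, nondecreasing, with $L(0)=0$, and obeys the complementarity relation $\int_{[0,T]}\xi(s)\,dL(s)=0$; likewise $\tilde\xi=\tilde\Phi+\tilde L$. Put $u\doteq\xi-\tilde\xi$. The contributions of $\lambda\varphi_1-\mu\varphi_2$ cancel, so $u$ is a continuous function of bounded variation on $[0,T]$ with $u(0)=0$ and differential
\[
du(s)=-\theta\bigl(G_s(\xi(s))-G_s(\tilde\xi(s))\bigr)\,ds+dL(s)-d\tilde L(s),
\]
all three terms being finite signed measures. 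By the integration-by-parts formula for continuous functions of bounded variation,
\[
u(t)^2=-2\theta\int_0^t u(s)\bigl(G_s(\xi(s))-G_s(\tilde\xi(s))\bigr)\,ds+2\int_0^t u(s)\,dL(s)-2\int_0^t u(s)\,d\tilde L(s).
\]
Because $G_s$ is nondecreasing, $u(s)=\xi(s)-\tilde\xi(s)$ and $G_s(\xi(s))-G_s(\tilde\xi(s))$ always have the same sign, so the first integral is nonnegative and contributes nonpositively. For the second, $\int_0^t u(s)\,dL(s)=\int_0^t\xi(s)\,dL(s)-\int_0^t\tilde\xi(s)\,dL(s)=-\int_0^t\tilde\xi(s)\,dL(s)\le0$, using the complementarity identity together with $\tilde\xi\ge0$ and $dL\ge0$; symmetrically $-\int_0^t u(s)\,d\tilde L(s)=-\int_0^t\xi(s)\,d\tilde L(s)\le0$. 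Therefore $u(t)^2\le0$ for every $t\in[0,T]$, whence $\xi\equiv\tilde\xi$ and consequently $\zeta\equiv\tilde\zeta$.

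The essential point — and the reason a Gronwall argument based merely on the $2$-Lipschitz continuity of $\Gamma$ should not be expected to work — is the sign of the first integral: the state enters the dynamics only through the nonincreasing map $a\mapsto-\theta G_t(a)$, i.e.\ a one-sided Lipschitz condition with constant zero, which is what carries the estimate through even though a finite-cost control $\varphi_3$ may be unbounded and $G_t$ need not be Lipschitz, so that $|G_t(\xi(t))-G_t(\tilde\xi(t))|$ is not controlled by $|\xi(t)-\tilde\xi(t)|$. The remaining ingredients are routine: the integrability facts above (ensuring $\Phi,\tilde\Phi$ are continuous and $u$ is of bounded variation) and the complementarity identity $\int_{[0,T]}\xi\,dL=0$, both immediate from \eqref{eq:SM}.
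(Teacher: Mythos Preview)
Your proof is correct and is essentially the same argument as the paper's: an energy estimate on $(\xi-\tilde\xi)^2$ via integration by parts, in which the drift term is nonpositive by the monotonicity of $a\mapsto\int_0^a\varphi_3(s,y)\,dy$ and the reflection terms are nonpositive by the Skorohod complementarity conditions. The only differences are cosmetic (your $G_t$, $L$, $\tilde L$ correspond to the paper's explicit integrals and to $\eta$, $\tilde\eta$), together with your added remarks on integrability and on why a naive Gronwall argument would fail.
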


We first complete the proof of the lower bound assuming that the Proposition %
\ref{prop:uniq} holds. Fix $\sigma \in (0,1)$ and choose $(\xi ^{\ast
},\zeta ^{\ast })\in \mathcal{C}([0,T]:{\mathbb{R}}_{+}^{2})$ such that 
\begin{equation*}
h(\xi ^{\ast },\zeta ^{\ast })+I_{T}(\xi ^{\ast },\zeta ^{\ast })\leq
\inf_{(\xi ,\zeta )\in {\mathcal{E}}}\{h(\xi ,\zeta )+I_{T}(\xi ,\zeta
)\}+\sigma .
\end{equation*}%
Since ${\tilde{I}}_{T}(\xi ^{\ast },\zeta ^{\ast })=I_{T}(\xi ^{\ast },\zeta
^{\ast })<\infty $, we can choose $\varphi ^{\ast }\in \tilde{\mathcal{U}}%
(\xi ^{\ast },\zeta ^{\ast })$ such that 
\begin{equation*}
\int_{0}^{T}\left( \lambda \ell (\varphi _{1}^{\ast }(s))+\mu \ell (\varphi
_{2}^{\ast }(s))+\theta \int_{{\mathbb{R}}_{+}}\ell (\varphi _{3}^{\ast
}(s,y))\,dy\right) ds\leq I_{T}(\xi ^{\ast },\zeta ^{\ast })+\sigma .
\end{equation*}%
Define the deterministic controls 
\begin{equation*}
\varphi _{i}^{n}(s)\doteq \frac{1}{n}{\boldsymbol{1}}_{\{\varphi _{i}^{\ast
}(s)\leq 1/n\}}+\varphi _{i}^{\ast }(s){\boldsymbol{1}}_{\{1/n<\varphi
_{i}^{\ast }(s)<n\}}+n{\boldsymbol{1}}_{\{\varphi _{i}^{\ast }(s)\geq n\}}
\end{equation*}%
for $i=1,2$ and $s\in \lbrack 0,T]$, and 
\begin{equation*}
\varphi _{3}^{n}(s,y)\doteq \frac{1}{n}{\boldsymbol{1}}_{\{\varphi
_{3}^{\ast }(s,y)\leq 1/n,y\leq n\}}+\varphi _{3}^{\ast }(s,y){\boldsymbol{1}%
}_{\{1/n<\varphi _{3}^{\ast }(s,y)<n,y\leq n\}}+n{\boldsymbol{1}}_{\{\varphi
_{3}^{\ast }(s,y)\geq n,y\leq n\}}+{\boldsymbol{1}}_{\{y>n\}}
\end{equation*}%
for $(s,y)\in \lbrack 0,T]\times {\mathbb{R}}_{+}$. Then $\varphi ^{n}\doteq
(\varphi _{1}^{n},\varphi _{2}^{n},\varphi _{3}^{n})\in {\bar{{\mathcal{A}}}}%
_{b}$. We will use that $\ell (z)\geq 0$ and $\ell (1)=0$, and also that $%
\ell (z)$ is increasing for $z>1$ and decreasing for $z<1$. Then from %
\eqref{eq:mainrepn} and these properties we have 
\begin{align}
-\frac{1}{n}\log E\exp \left\{ -nh(Z^{n})\right\} & \leq E\left[ h(\bar{Z}%
^{n})+\int_{0}^{T}\left( \lambda \ell (\varphi _{1}^{n}(s))+\mu \ell
(\varphi _{2}^{n}(s))+\theta \int_{{\mathbb{R}}_{+}}\ell (\varphi
_{3}^{n}(s,y))\,dy\right) ds\right]   \notag \\
& \leq E\left[ h(\bar{Z}^{n})+\int_{0}^{T}\left( \lambda \ell (\varphi
_{1}^{\ast }(s))+\mu \ell (\varphi _{2}^{\ast }(s))+\theta \int_{{\mathbb{R}}%
_{+}}\ell (\varphi _{3}^{\ast }(s,y))\,dy\right) ds\right] .
\label{eq:lowbd1}
\end{align}%
Note that \eqref{eq:cost_bd_repn} is satisfied with $M_{0}$ replaced by $%
M\doteq (I_{T}(\xi ^{\ast },\zeta ^{\ast })+1)/\min (\lambda ,\mu ,\theta )$%
. It then follows from Lemma \ref{lem:tightness}(a) that $\{(\varphi ^{n},%
\bar{Z}^{n})\}$ is tight. Clearly $\varphi ^{n}\rightarrow \varphi ^{\ast
}\in {\mathcal{M}}$. By Lemma \ref{lem:tightness}(b), if $\bar{Z}^{n}$
converges along a subsequence to $\bar{Z}$, then $\bar{Z}$ must satisfy %
\eqref{eq:Xbar_sy} and \eqref{eq:Ybar_sy} with $\varphi $ replaced with $%
\varphi ^{\ast }$, namely $\varphi ^{\ast }\in \tilde{{\mathcal{U}}}(\bar{Z})
$. From uniqueness in Proposition \ref{prop:uniq} it follows that $\bar{Z}%
=(\xi ^{\ast },\zeta ^{\ast })$. Finally, from \eqref{eq:lowbd1} we have 
\begin{align*}
& \limsup_{n\rightarrow \infty }-\frac{1}{n}\log E\exp \left\{
-nh(Z^{n})\right\}  \\
& \quad \leq \limsup_{n\rightarrow \infty }E\left[ h(\bar{Z}%
^{n})+\int_{0}^{T}\left( \lambda \ell (\varphi _{1}^{\ast }(s))+\mu \ell
(\varphi _{2}^{\ast }(s))+\theta \int_{{\mathbb{R}}_{+}}\ell (\varphi
_{3}^{\ast }(s,y))\,dy\right) ds\right]  \\
& \quad =h(\xi ^{\ast },\zeta ^{\ast })+\int_{0}^{T}\left( \lambda \ell
(\varphi _{1}^{\ast }(s))+\mu \ell (\varphi _{2}^{\ast }(s))+\theta \int_{{%
\mathbb{R}}_{+}}\ell (\varphi _{3}^{\ast }(s,y))\,dy\right) ds \\
& \quad \leq h(\xi ^{\ast },\zeta ^{\ast })+I_{T}(\xi ^{\ast },\zeta ^{\ast
})+\sigma  \\
& \quad \leq \inf_{(\xi ,\zeta )\in {\mathcal{E}}}\{h(\xi ,\zeta )+I_{T}(\xi
,\zeta )\}+2\sigma ,
\end{align*}%
which completes the proof of the lower bound. \hfill \qed

\medskip
We now return to the proof of Proposition \ref{prop:uniq}.

\textbf{Proof of Proposition \ref{prop:uniq}.} Suppose that $(%
\tilde{\xi},\tilde{\zeta})$ is another pair such that $\varphi \in \tilde{%
\mathcal{U}}(\tilde{\xi},\tilde{\zeta})$. It suffices to show that $\tilde{%
\xi}=\xi $. Write $\xi (t)=\psi (t)+\eta (t)$, $\tilde{\xi}(t)=\tilde{\psi}%
(t)+\tilde{\eta}(t)$, where $\psi ,{\tilde{\psi}}$ are the unconstrained
processes defined by 
\begin{align*}
\psi (t)& =x_{0}+\lambda \int_{0}^{t}\varphi _{1}(s)\,ds-\mu
\int_{0}^{t}\varphi _{2}(s)\,ds-\theta \int_{\lbrack 0,t]\times {\mathbb{R}}%
_{+}}\varphi _{3}(s,y){\boldsymbol{1}}_{[0,\xi (s)]}(y)\,ds\,dy, \\
{\tilde{\psi}}(t)& =x_{0}+\lambda \int_{0}^{t}\varphi _{1}(s)\,ds-\mu
\int_{0}^{t}\varphi _{2}(s)\,ds-\theta \int_{\lbrack 0,t]\times {\mathbb{R}}%
_{+}}\varphi _{3}(s,y){\boldsymbol{1}}_{[0,{\tilde{\xi}}(s)]}(y)\,ds\,dy,
\end{align*}%
and $\eta ,\tilde{\eta}$ are the reflection terms such that (see e.g., \cite[%
Section 3.6.C]{KaratzasShreve1991brownian}) 
\begin{align*}
& \eta (0)=0,\eta (t)\mbox{ is nondecreasing and }\int_{0}^{T}\xi
(t)\,d\eta (t)=0, \\
& {\tilde{\eta}}(0)=0,{\tilde{\eta}}(t)\mbox{ is nondecreasing and }%
\int_{0}^{T}{\tilde{\xi}}(t)\,d{\tilde{\eta}}(t)=0.
\end{align*}%
{Recall from Remark \ref{rem:abscty} that $\xi, \tilde \xi, \psi, \tilde \psi, \eta, \tilde \eta$ are absolutely continuous.
For an absolutely continuous function $f$, we will denote its a.e.\ derivative as $f'$.} Then for $t\in \lbrack 0,T]$, 
\begin{align*}
\lbrack \xi (t)-\tilde{\xi}(t)]^{2}& =2\int_{0}^{t}(\xi (s)-\tilde{\xi}%
(s))(\xi ^{\prime }(s)-\tilde{\xi}^{\prime }(s))\,ds \\
& =2\int_{0}^{t}(\xi (s)-\tilde{\xi}(s))(\psi ^{\prime }(s)-\tilde{\psi}%
^{\prime }(s))\,ds+2\int_{0}^{t}(\xi (s)-\tilde{\xi}(s))d(\eta (s)-\tilde{%
\eta}(s)).
\end{align*}%
Note that for a.e.\ $s\in \lbrack 0,T]$, whenever $\xi (s)>{\tilde{\xi}}(s)$%
, $\xi (s)={\tilde{\xi}}(s)$, or $\xi (s)<{\tilde{\xi}}(s)$, 
\begin{equation*}
(\xi (s)-\tilde{\xi}(s))(\psi ^{\prime }(s)-\tilde{\psi}^{\prime
}(s))=-\theta (\xi (s)-\tilde{\xi}(s))\int_{{\mathbb{R}}_{+}}\varphi
_{3}(s,y)[{\boldsymbol{1}}_{[0,\xi (s)]}(y)-{\boldsymbol{1}}_{[0,\tilde{\xi}%
(s)]}(y)]\,dy\leq 0.
\end{equation*}%
Also, since $\tilde{\eta}$ is nondecreasing, 
\begin{align*}
\int_{0}^{t}{\boldsymbol{1}}_{\{\xi (s)>\tilde{\xi}(s)\}}(\xi (s)-\tilde{\xi}%
(s))d(\eta (s)-\tilde{\eta}(s))& \leq \int_{0}^{t}{\boldsymbol{1}}_{\{\xi
(s)>\tilde{\xi}(s)\}}(\xi (s)-\tilde{\xi}(s))d\eta (s) \\
& \leq \int_{0}^{t}{\boldsymbol{1}}_{\{\xi (s)>0\}}\xi (s)d\eta (s) \\
& =0.
\end{align*}%
Similarly 
\begin{equation*}
\int_{0}^{t}{\boldsymbol{1}}_{\{\xi (s)<\tilde{\xi}(s)\}}(\xi (s)-\tilde{\xi}%
(s))d(\eta (s)-\tilde{\eta}(s))\leq 0.
\end{equation*}%
Thus $[\xi (t)-\tilde{\xi}(t)]^{2}=0$ for all $t\in \lbrack 0,T]$. This
completes the proof. \hfill \qed

\section{Properties of the rate function.}

\label{sec:proprf} For the rest of this paper we will assume that $\lambda
\ge \mu$. This property will not be explicitly noted in the statements of
various results.

In this section we give a different representation for the rate function $%
I_T $ and establish certain convexity properties of the rate function.

Let ${\mathcal{C}}_T$ be the collection of functions $(\xi,\zeta) \in {%
\mathcal{C}}([0,T]:{\mathbb{R}}_+^2)$ such that

\begin{enumerate}[(i)]

\item $\xi(0)=x_0$, $\zeta(0)=0$.

\item $\xi,\zeta$ are absolutely continuous, and $\zeta^{\prime }(t)=0$ for
a.e. $t \in [0,T]$ such that $\xi(t)=0$.

\item $\zeta$ is nondecreasing.
\end{enumerate}

We note that $I_T(\xi,\zeta)= \infty$ if $(\xi, \zeta) \not \in {\mathcal{C}}%
_T$.

For $(x,p,q)\in {\mathbb{R}}_{+}\times {\mathbb{R}}\times {\mathbb{R}}%
_{+}$, let
\begin{align*}
L(x,p,q)& \doteq \lambda \ell \left( \frac{\sqrt{(p+q)^{2}+4\lambda \mu }%
+(p+q)}{2\lambda }\right) +\mu \ell \left( \frac{\sqrt{(p+q)^{2}+4\lambda
\mu }-(p+q)}{2\mu }\right)  \\
& \quad +\theta x\ell \left( \frac{q}{\theta x}\right) \cdot {\boldsymbol{1}}%
_{\{x>0\}}+\infty \cdot {\boldsymbol{1}%
}_{\{x=0,q>0\}}.
\end{align*}%
The following lemma gives an alternative representation for $I_{T}(\xi
,\zeta )$ for $(\xi ,\zeta )\in {\mathcal{C}}_{T}$.

\begin{lemma}
\label{lem:rewrite} For $(\xi,\zeta) \in {\mathcal{C}}_T$, 
\begin{align}
I_T(\xi,\zeta) & = \int_0^T L(\xi(s),\xi^{\prime }(s),\zeta^{\prime
}(s)) \, ds.  \label{eq:itlrepn}
\end{align}
\end{lemma}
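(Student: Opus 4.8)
The plan is to compute the inner infimum in the definition \eqref{eq:I_T} of $I_T(\xi,\zeta)$ pointwise in $s$, exploiting the fact that for $(\xi,\zeta)\in{\mathcal{C}}_T$ the constraint equations \eqref{eq:eqxizet} can be differentiated. First I would note that for $(\xi,\zeta)\in{\mathcal{C}}_T$ and $\varphi\in{\mathcal{U}}(\xi,\zeta)$, differentiating the two relations in \eqref{eq:eqxizet} gives, for a.e.\ $s$, that $\zeta'(s)=\theta\varphi_3(s)\xi(s)$ and that $\xi'(s)=\lambda\varphi_1(s)-\mu\varphi_2(s)-\theta\varphi_3(s)\xi(s)+\dot\eta(s)$, where $\dot\eta\ge0$ is the derivative of the reflection term, which is supported on $\{s:\xi(s)=0\}$. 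On the set $\{\xi(s)>0\}$ the reflection term does not act, so $\lambda\varphi_1(s)-\mu\varphi_2(s)=\xi'(s)+\zeta'(s)$ and $\theta\varphi_3(s)\xi(s)=\zeta'(s)$. On the set $\{\xi(s)=0\}$, condition (ii) in the definition of ${\mathcal{C}}_T$ forces $\zeta'(s)=0$, hence $\theta\varphi_3(s)\xi(s)=0=\zeta'(s)$ trivially, and the only constraint linking $\varphi_1,\varphi_2$ is through the Skorohod map; but since on $\{\xi=0\}$ we may always choose $\varphi_1=\varphi_2=1$ (paying zero cost via $\ell(1)=0$) consistently with $\xi$ staying at $0$ whenever $\xi'(s)=0$ there (which holds a.e.\ on $\{\xi=0\}$), the pointwise cost reduces to the case $p=0$.

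Next I would solve the resulting pointwise minimization problems. With $p\doteq\xi'(s)$ and $q\doteq\zeta'(s)$, and writing $a\doteq\varphi_1(s)$, $b\doteq\varphi_2(s)$, $c\doteq\varphi_3(s)$, the problem on $\{\xi(s)>0\}$ is to minimize $\lambda\ell(a)+\mu\ell(b)+\theta\xi(s)\ell(c)$ subject to $\lambda a-\mu b=p+q$ and $\theta\xi(s)c=q$. The third term decouples: the constraint $\theta\xi(s)c=q$ forces $c=q/(\theta\xi(s))$, contributing exactly $\theta\xi(s)\ell(q/(\theta\xi(s)))$, which is the third term of $L$. For the first two terms, introduce a Lagrange multiplier (or substitute $b=(\lambda a-(p+q))/\mu$ and differentiate): the stationarity conditions $\lambda\ell'(a)=\lambda\rho$ and $\mu\ell'(b)=-\mu\rho$, i.e.\ $\log a=\rho$ and $\log b=-\rho$, give $ab=1$ and $\lambda a-\mu b=p+q$; solving the quadratic $\lambda a^2-(p+q)a-\mu=0$ yields $a=\bigl(\sqrt{(p+q)^2+4\lambda\mu}+(p+q)\bigr)/(2\lambda)$ and $b=1/a=\bigl(\sqrt{(p+q)^2+4\lambda\mu}-(p+q)\bigr)/(2\mu)$, which are exactly the arguments of $\ell$ in the first two terms of $L(x,p,q)$. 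Convexity of $\ell$ guarantees this stationary point is the global minimizer. On $\{\xi(s)=0\}$: if $q>0$ then there is no admissible $c$ (since $\theta\xi(s)c=0\neq q$), matching the $+\infty\cdot\mathbf{1}_{\{x=0,q>0\}}$ term; if $q=0$, the third term vanishes and the first two are minimized as above with $q=0$, so $L(0,p,0)$ is the correct value and matches the formula. One must also check that $p=\xi'(s)=0$ a.e.\ on $\{\xi(s)=0\}$, which is standard for absolutely continuous nonnegative functions.

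Finally I would assemble the pieces: since the integrand is minimized pointwise by the above choice of $\varphi(s)$, and since the resulting $\varphi=(\varphi_1,\varphi_2,\varphi_3)$ is measurable (the minimizers are explicit continuous functions of $(\xi(s),\xi'(s),\zeta'(s))$ off a null set) and lies in ${\mathcal{U}}(\xi,\zeta)$ — here one re-integrates to confirm the Skorohod-map identity holds, using that the constructed dynamics have $\xi$ as their reflected solution — we get $I_T(\xi,\zeta)\le\int_0^T L(\xi(s),\xi'(s),\zeta'(s))\,ds$; the reverse inequality is immediate because \emph{any} $\varphi\in{\mathcal{U}}(\xi,\zeta)$ satisfies the pointwise constraints a.e., so its cost is bounded below by $\int_0^T L\,ds$. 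The main obstacle I anticipate is the careful bookkeeping around the reflection term $\eta$ and the boundary set $\{\xi=0\}$: one has to verify both that $\zeta'=0$ a.e.\ there (given) does not interact badly with the freedom in $\varphi_1,\varphi_2$, and that the pointwise-constructed control, when plugged back into the Skorohod map \eqref{eq:SM}, actually reproduces $\xi$ rather than some other reflected trajectory — this requires invoking uniqueness of the Skorohod problem together with the fact that, by construction, $x_0+\int_0^t(\lambda\varphi_1-\mu\varphi_2-\theta\varphi_3\xi)\,ds = \xi(t)-\eta(t)$ with $\eta$ the correct reflection term of $\xi$.
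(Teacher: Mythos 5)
Your overall architecture matches the paper's proof: differentiate the constraint equations, minimize the cost density pointwise in $s$ subject to $\lambda\varphi_1-\mu\varphi_2=\xi'+\zeta'$ and $\theta\varphi_3\xi=\zeta'$ via Lagrange multipliers, obtain $\varphi_1\varphi_2=1$ and the explicit quadratic roots, and verify the resulting control lies in $\mathcal{U}(\xi,\zeta)$. The interior case $\{\xi(s)>0\}$ is handled correctly and identically to the paper.

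There is, however, a genuine error in your treatment of the boundary set $\{\xi(s)=0\}$, and it is not a cosmetic one. You assert that on this set "we may always choose $\varphi_1=\varphi_2=1$ (paying zero cost via $\ell(1)=0$) consistently with $\xi$ staying at $0$." This is false whenever $\lambda>\mu$ (recall the standing assumption $\lambda\ge\mu$ in this part of the paper). The one-dimensional Skorohod reflection can only push the trajectory upward, so for $\xi$ to remain at $0$ on a set of positive Lebesgue measure the unconstrained drift there must be nonpositive, i.e.\ $\lambda\varphi_1(s)-\mu\varphi_2(s)\le 0$ a.e.\ on $\{\xi(s)=0\}$; the choice $\varphi_1=\varphi_2=1$ gives drift $\lambda-\mu>0$ and is therefore inadmissible. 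Your claim is also internally inconsistent with your own second step, where you compute the boundary value as $L(0,p,0)=\lambda\ell(\sqrt{\mu/\lambda})+\mu\ell(\sqrt{\lambda/\mu})=(\sqrt{\lambda}-\sqrt{\mu})^2>0$: if a zero-cost admissible control existed on $\{\xi=0\}$, the infimum there would be $0$, the lemma would fail (one would get $I_T<\int_0^T L$ for any trajectory spending positive time at the boundary), and downstream results such as the identification of $C(0)=(\sqrt{\lambda}-\sqrt{\mu})^2$ would collapse. The correct argument, as in the paper, is to minimize $\lambda\ell(\varphi_1)+\mu\ell(\varphi_2)$ subject to the \emph{inequality} constraint $\lambda\varphi_1-\mu\varphi_2\le 0$ (the reflection term $\dot\eta\ge 0$ absorbs the slack, so equality need not hold); since the unconstrained minimizer $(1,1)$ is infeasible for $\lambda>\mu$, the constraint is active and the minimizer is $\varphi_1=\sqrt{\mu/\lambda}$, $\varphi_2=\sqrt{\lambda/\mu}$, $\varphi_3=1$, with cost density exactly $L(0,0,0)$. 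With that correction your argument closes; as written, the first paragraph proves too much and contradicts the statement being proved.
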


\begin{proof}
	Fix $(\xi,\zeta) \in \Cmc_T$.
	Recall 
	\begin{equation}
		I_T(\xi,\zeta) = \inf_{\varphi \in \Umc(\xi,\zeta)} \left\{ \lambda \int_0^T \ell(\varphi_1(s))\,ds + \mu \int_0^T \ell(\varphi_2(s))\,ds + \theta \int_0^T \xi(s) \ell(\varphi_3(s))\,ds \right\}.
		\label{eq:258}
	\end{equation}
	We would like to find a $\varphi \in \Umc(\xi,\zeta)$ for which the above infimum is achieved.
	
	Consider $t \in [0,T]$ such that $\xi'(t)$ and $\zeta'(t)$ exist.
	When $\xi(t) > 0$, from \eqref{eq:eqxizet} we have
	\begin{align}
		\xi'(t) + \zeta'(t) & = \lambda \varphi_1(t) - \mu \varphi_2(t), \label{eq:equation1}\\
		\zeta'(t) & = \theta \varphi_3(t) \xi(t). \label{eq:equation2}
	\end{align}
	It is easy to check that, given these constraints, the $\varphi(t)$   that minimizes 
	$$\lambda \ell(\varphi_1(t)) + \mu \ell(\varphi_2(t)) + \theta \xi(t) \ell(\varphi_3(t))$$
	must satisfy $\varphi_1(t)\varphi_2(t)=1$, namely
	\begin{align}
		\varphi_1(t) & = \frac{\sqrt{(\xi'(t) + \zeta'(t))^2 + 4\lambda\mu} + (\xi'(t) + \zeta'(t))}{2\lambda}, \label{eq:varphi1} \\
		\varphi_2(t) & = \frac{\sqrt{(\xi'(t) + \zeta'(t))^2 + 4\lambda\mu} - (\xi'(t) + \zeta'(t))}{2\mu}, \label{eq:varphi2} \\
		\varphi_3(t) & = \frac{\zeta'(t)}{\theta \xi(t)}. \label{eq:varphi3}
	\end{align}
	
	Since the Lebesgue measure of $\{t: \xi(t) =0, \xi'(t) \neq 0\}=0$,   we must have that for a.e. $t$ on the set $\{\xi(t)=0\}$, $\xi'(t) = 0$, $\zeta'(t)=0$, and, using \eqref{eq:eqxizet}, 
	\begin{equation*}
		\lambda \varphi_1(t) - \mu \varphi_2(t) \le 0.
	\end{equation*}
	Using Lagrange multipliers, it is easy to check that under this constraint, the minimizer of
	$$\lambda \ell(\varphi_1(t)) + \mu \ell(\varphi_2(t)) + \theta \xi(t) \ell(\varphi_3(t)) = \lambda \ell(\varphi_1(t)) + \mu \ell(\varphi_2(t))$$
	must satisfy $\varphi_1(t)\varphi_2(t)=1$ and $\lambda \varphi_1(t) - \mu \varphi_2(t) = 0$, namely
	\begin{equation}
		\label{eq:varphi0}
		\varphi_1(t) = \sqrt{\frac{\mu}{\lambda}}, \quad
		\varphi_2(t) = \sqrt{\frac{\lambda}{\mu}}, \quad
		\varphi_3(t) = 1.
	\end{equation}
	Since $\lambda \varphi_1(t) - \mu \varphi_2(t) = 0$, the $\varphi$ defined by \eqref{eq:varphi1} -- \eqref{eq:varphi0} is in $\Umc(\xi,\zeta)$
	and in fact \eqref{eq:eqxizet} holds without the Skorohod map on the right side of the first line.
	Plugging the minimizer $\varphi$ in the cost in \eqref{eq:258} gives the desired result.
\end{proof}

\begin{remark}
\label{rmk:rewrite}

\begin{enumerate}[(i)]
\item The proof of Lemma \ref{lem:rewrite} in fact shows that for $%
(\xi,\zeta) \in {\mathcal{C}}_T$ and a.e. $s \in [0,T]$ 
\begin{equation*}
L(\xi(s),\xi^{\prime }(s),\zeta^{\prime }(s)) = \lambda
\ell(\varphi_1(s)) + \mu \ell(\varphi_2(s)) + \theta \xi(s)
\ell(\varphi_3(s)),
\end{equation*}
where $\varphi = (\varphi_1, \varphi_2, \varphi_3)$ is given by %
\eqref{eq:varphi1} -- \eqref{eq:varphi0}.

\item The proof also shows that the representation \eqref{eq:itlrepn} holds
for any subinterval of $[0,T]$, namely 
\begin{align*}
& \inf_{\varphi \in {\mathcal{U}}(\xi,\zeta)} \left\{ \lambda
\int_{t_1}^{t_2} \ell(\varphi_1(s))\,ds + \mu \int_{t_1}^{t_2}
\ell(\varphi_2(s))\,ds + \theta \int_{t_1}^{t_2} \xi(s)
\ell(\varphi_3(s))\,ds \right\} \\
& \quad = \int_{t_1}^{t_2} L(\xi(s),\xi^{\prime }(s),\zeta^{\prime
}(s)) \, ds
\end{align*}
for each $(\xi,\zeta) \in {\mathcal{C}}_T$ and $0 \le t_1 < t_2 \le T$.
\end{enumerate}
\end{remark}

For general $(\xi,\zeta) \in {\mathcal{C}}_T$, let ${\mathcal{U}}%
_+(\xi,\zeta)$ be the collection of $\varphi = (\varphi_1, \varphi_2,
\varphi_3)$ such that $\varphi_i: [0,T] \to {\mathbb{R}}_+$, $i=1,2,3$ are
measurable maps and 
\begin{align*}
\xi(t) & = x_0 + \lambda \int_0^t \varphi_1(s)\,ds - \mu \int_0^t
\varphi_2(s)\,ds - \theta \int_0^t \varphi_3(s) \xi(s)\,ds, \\
\zeta(t) & = \theta \int_0^t \varphi_3(s) \xi(s)\,ds.
\end{align*}
As noted in the last line of the proof of Lemma \ref{lem:rewrite}, 
we have in fact proved the following result.

\begin{corollary}
\label{lem:no_Skorohod} For $(\xi,\zeta) \in {\mathcal{C}}_T$, 
\begin{equation*}
I_T(\xi,\zeta) = \inf_{\varphi \in {\mathcal{U}}_+(\xi,\zeta)} \left\{
\lambda \int_0^T \ell(\varphi_1(s))\,ds + \mu \int_0^T
\ell(\varphi_2(s))\,ds + \theta \int_0^T \xi(s) \ell(\varphi_3(s))\,ds
\right\}.
\end{equation*}
Moreover, the minimizer $\varphi$ in the above infimum is given by %
\eqref{eq:varphi1}--\eqref{eq:varphi3} when $\xi(t)>0$ and \eqref{eq:varphi0}
when $\xi(t)=0$.
\end{corollary}

The following lemma says that $L$ and $I_T(\xi,\zeta)$ are convex. 

\begin{lemma}
\label{lem:convexity} $L$ is a convex function on ${\mathbb{R}}_+ \times {%
\mathbb{R}} \times {\mathbb{R}}_+$ and $I_T$ is a convex function on ${%
\mathcal{C}}_T$.
\end{lemma}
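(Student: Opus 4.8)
The plan is to prove the two assertions separately, in each case by recognizing the relevant function as built from convex functions via operations that preserve convexity: partial minimization under an affine constraint, precomposition with linear maps, perspective functions, and integration in $s$. The point is to avoid differentiating the square-root expressions in the definition of $L$.

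\emph{Convexity of $L$.} First I would set $G(r)\doteq\inf\{\lambda\ell(a)+\mu\ell(b):a,b\ge 0,\ \lambda a-\mu b=r\}$ for $r\in\mathbb{R}$. The map $(a,b)\mapsto\lambda\ell(a)+\mu\ell(b)$ is jointly convex and closed on $\mathbb{R}^2$ (with value $+\infty$ when $a<0$ or $b<0$), and the constraint $\lambda a-\mu b=r$ is affine jointly in $(a,b,r)$; hence $G$, the partial minimum over $(a,b)$, is convex on $\mathbb{R}$. As computed in the proof of Lemma~\ref{lem:rewrite} (the Lagrange-multiplier step), this infimum is finite and attained at $ab=1$, so for $x>0$ the first two summands of $L(x,p,q)$ coincide with $G(p+q)$, i.e.\ $L(x,p,q)=G(p+q)+\theta x\,\ell(q/(\theta x))$ when $x>0$. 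Now $(x,p,q)\mapsto G(p+q)$ is convex, being $G$ precomposed with a linear map. Next set $H(x,q)\doteq\theta x\,\ell(q/(\theta x))$ for $x>0$, $H(0,0)\doteq 0$, and $H(0,q)\doteq+\infty$ for $q>0$; one recognizes $H$ as the closed perspective of the closed convex function $\ell$, precomposed with the linear map $(x,q)\mapsto(\theta x,q)$, hence convex on $\mathbb{R}_+\times\mathbb{R}_+$. Comparing with the definition of $L$, one has $L(x,p,q)=G(p+q)+H(x,q)$ on all of $\mathbb{R}_+\times\mathbb{R}\times\mathbb{R}_+$ (the term $+\infty\cdot\mathbf{1}_{\{x=0,q>0\}}$ in the definition being exactly $H(0,q)$ there), and a sum of two convex functions is convex.

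\emph{Convexity of $I_T$.} I would first check that $\mathcal{C}_T$ is a convex subset of $\mathcal{C}([0,T]:\mathbb{R}_+^2)$. Conditions (i) and (iii) are stable under convex combinations, being respectively the affine constraints $\xi(0)=x_0,\zeta(0)=0$ and the constraint $\zeta'\ge 0$. For (ii), if $(\xi_i,\zeta_i)\in\mathcal{C}_T$, $\alpha\in[0,1]$, and $(\xi,\zeta)=\alpha(\xi_1,\zeta_1)+(1-\alpha)(\xi_2,\zeta_2)$, then, since $\xi_1,\xi_2\ge 0$, one has $\{t:\xi(t)=0\}=\{t:\xi_1(t)=0\}\cap\{t:\xi_2(t)=0\}$; on this set $\zeta_1'=\zeta_2'=0$ a.e.\ by (ii) for each $i$, so $\zeta'=\alpha\zeta_1'+(1-\alpha)\zeta_2'=0$ a.e.\ there. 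Since $I_T\equiv+\infty$ off $\mathcal{C}_T$, it then suffices to verify the convexity inequality on $\mathcal{C}_T$. By Lemma~\ref{lem:rewrite}, $I_T(\xi,\zeta)=\int_0^T L(\xi(s),\xi'(s),\zeta'(s))\,ds$ for $(\xi,\zeta)\in\mathcal{C}_T$; for a.e.\ fixed $s$ the map $(\xi,\zeta)\mapsto(\xi(s),\xi'(s),\zeta'(s))\in\mathbb{R}_+\times\mathbb{R}\times\mathbb{R}_+$ is linear, so by convexity of $L$ the integrand at $\alpha(\xi_1,\zeta_1)+(1-\alpha)(\xi_2,\zeta_2)$ is, a.e.\ in $s$, bounded by the corresponding convex combination of the two integrands; integrating over $[0,T]$ gives convexity of $I_T$ on $\mathcal{C}_T$ (allowing the value $+\infty$).

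\emph{Expected main difficulty.} The argument is essentially a chain of routine applications of convexity-preserving operations, so there is no genuine obstacle; the two places needing a little care are the behaviour at $x=0$ — matching the term $+\infty\cdot\mathbf{1}_{\{x=0,q>0\}}$ in the definition of $L$ to the closed perspective $H$ — and the short verification that condition (ii) defining $\mathcal{C}_T$ is stable under convex combinations, which is where convexity of $\mathcal{C}_T$ (needed to even state convexity of $I_T$ on $\mathcal{C}_T$ and to apply the integral representation to the combination) is used.
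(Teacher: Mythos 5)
Your proof is correct, but it takes a genuinely different route from the paper's. For the first two summands of $L$, the paper writes them as $f_1(p+q)+\lambda+\mu$ with $f_1(x)=x\log\frac{\sqrt{x^2+4\lambda\mu}+x}{2\lambda}-\sqrt{x^2+4\lambda\mu}$ and checks $f_1''(x)=(x^2+4\lambda\mu)^{-1/2}>0$ directly; you instead recognize them as the value $G(p+q)$ of the infimal projection $\inf\{\lambda\ell(a)+\mu\ell(b):\lambda a-\mu b=r\}$ (re-using the Lagrange computation from Lemma \ref{lem:rewrite}, which does produce exactly $\varphi_1\varphi_2=1$ and the stated square-root formulas) and invoke preservation of convexity under partial minimization over an affine constraint. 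For the third summand, the paper computes the Hessian of $f_2(x,y)=x\ell(y/x)$ on $\{x>0,y>0\}$ and then verifies by hand that the extension $\bar f_2$ with $\bar f_2(0,0)=0$, $\bar f_2(0,y)=\infty$ for $y>0$ remains convex; you identify the same object as the closed perspective of $\ell$ composed with $(x,q)\mapsto(\theta x,q)$, for which the boundary values at $x=0$ (namely the recession function $\ell^\infty(q)$, which is $0$ at $q=0$ and $+\infty$ for $q>0$) come out automatically and match the definition of $L$. Your approach buys a cleaner, computation-free treatment of the square roots and of the boundary $x=0$, at the price of quoting two standard convex-analysis facts; the paper's is more elementary and self-contained. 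You also explicitly verify that $\mathcal{C}_T$ is a convex set (in particular that condition (ii) survives convex combination because $\{\xi=0\}=\{\xi_1=0\}\cap\{\xi_2=0\}$ when $\xi_1,\xi_2\ge0$), a point the paper passes over in silence but which is needed to even state convexity of $I_T$ on $\mathcal{C}_T$; this is a small but genuine improvement in completeness.
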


\begin{proof}
	In view of Lemma \ref{lem:rewrite},
	it suffices to show convexity of $L$.
	Note that the function $$f_1(x) \doteq x \log \frac{\sqrt{x^2+4\lambda\mu}+x}{2\lambda} - \sqrt{x^2+4\lambda\mu}$$ is convex in $x \in \Rmb$ since $f_1''(x) = \frac{1}{\sqrt{x^2+4\lambda\mu}} > 0$.
	Also,
	\begin{align*}
		& \lambda \ell\left(\frac{\sqrt{(p + q)^2 + 4\lambda\mu} + (p + q)}{2\lambda}\right) + \mu \ell\left(\frac{\sqrt{(p + q)^2 + 4\lambda\mu} - (p + q)}{2\mu}\right) = f_1(p+q) + \lambda + \mu
	\end{align*}
	so the map taking $(p,q)$ to the left side of the last display is convex on $\R \times \R_+$. 
	
	Next, consider the function $f_2(x,y)\doteq x \ell(\frac{y}{x})$ defined on the convex set $O \doteq \{ (x,y) : x>0, y \ge 0 \}$.
	Note that the Hessian matrix of $f_2$ is
	\begin{equation*}
		\left( \begin{array}{cc}
			\frac{y}{x^2} & -\frac{1}{x} \\
			-\frac{1}{x} & \frac{1}{y} \\
		\end{array} \right)
	\end{equation*}
	on $\{ (x,y) : x>0, y>0 \}$, which is positive semidefinite.
	So $f_2$ is convex on $\{ (x,y) : x>0, y>0 \}$ and,
	since $f_2$ is continuous on $O$, it is also convex on $O$.	
	Let $\fbar_2$ be the extension of $f_2$ such that $\fbar_2(0,0)=0$ and $\fbar_2(x,y) = \infty$ for $(x,y) \in \R_+^2 \setminus \left(O \cup\{(0,0)\}\right)$.
	Note that for $(x,y)=(0,0)$, $(\xtil,\ytil) \in O$, and $r \in (0,1)$,
	\begin{align*}
		\fbar_2((1-r)x+r\xtil,(1-r)y+r\ytil) = r\xtil \ell({\ytil}/{\xtil}) = (1-r)\fbar_2(x,y) + r\fbar_2(\xtil,\ytil).
	\end{align*}
	Therefore $\fbar_2$ is convex on $\Rmb^2_+$ and so
	\begin{equation*}
		(x,q)\mapsto \theta x \ell\left(\frac{q}{\theta x}\right) \cdot \one_{\{x>0\}} + \infty \cdot \one_{\{x=0,q>0\}} = \fbar_2(\theta x,q)
	\end{equation*}
	is convex on $\Rmb_+^2$. We have thus shown the convexity of $L$ on $\Rmb_+ \times \Rmb \times \Rmb_+$ and  consequently that of $I_T$ on  $\Cmc_T$.
\end{proof}

\section{Construction of Minimizer for $I^*_{\protect\gamma,T}$.}

Recall that we are assuming that $\lambda \ge \mu$. In order to analyze the
long time asymptotics of $I_{\gamma, T}^*, I_{\gamma, T}$ and $\tilde
I_{\gamma, T}$, introduced in \eqref{eq:eqiii}, we will first formally
calculate a candidate minimizer $(\xi,\zeta) \in {\mathcal{C}}_T$ for $%
I_{\gamma, T}^*$, and then use that to prove our main result, Theorem \ref%
{thm:I*}.

\subsection{A Formal Calculation of a Candidate Minimizer.}

\label{sec:formal} Consider the Euler-Lagrange equations \cite[Chapter 6]{tro}
associated with $I_{\gamma ,T}^{\ast }$, namely, 
\begin{equation*}
L_{1}=\frac{d}{dt}L_{2},\quad 0=\frac{d}{dt}L_{3},
\end{equation*}%
where $L_{i}$ denotes the partial derivative of $L$ with respect to the $i$%
-th variable.

From Lemma \ref{lem:rewrite}, for $(\xi ,\zeta )\in {\mathcal{C}}_{T}$ 
\begin{equation*}
L(\xi (t),\xi ^{\prime }(t),\zeta ^{\prime }(t))=\lambda \ell
(\varphi _{1}(t))+\mu \ell (\varphi _{2}(t))+\theta \xi (t)\ell (\varphi
_{3}(t)),
\end{equation*}%
where $\varphi =(\varphi _{1},\varphi _{2},\varphi _{3})$ is given by %
\eqref{eq:varphi1} -- \eqref{eq:varphi3} when $\xi (t)\neq 0$ and by %
\eqref{eq:varphi0} when $\xi (t)=0$.

Using the form of $\varphi _{i}$, one can check that (suppressing in the
notation the dependence on $t$ and with $L_{i}=L_{i}(\xi ,\xi ^{\prime
},\zeta ^{\prime })$) 
\begin{align*}
L_{1}& =\theta \ell (\varphi _{3})+\theta \xi (\log \varphi _{3})\left( -%
\frac{\zeta ^{\prime }}{\theta \xi ^{2}}\right) =\theta (1-\varphi _{3}), \\
L_{2}& =\lambda (\log \varphi _{1})\frac{\frac{\xi ^{\prime }+\zeta ^{\prime
}}{\sqrt{(\xi ^{\prime }+\zeta ^{\prime 2}+4\lambda \mu }}+1}{2\lambda }+\mu
(\log \varphi _{2})\frac{\frac{\xi ^{\prime }+\zeta ^{\prime }}{\sqrt{(\xi
^{\prime }+\zeta ^{\prime 2}+4\lambda \mu }}-1}{2\mu }={\frac{1}{2}}\log 
\frac{\varphi _{1}}{\varphi _{2}}=\log \varphi _{1}, \\
L_{3}& =\log \varphi _{1}+\theta \xi (\log \varphi _{3})\frac{1}{\theta \xi }%
=\log (\varphi _{1}\varphi _{3}).
\end{align*}%
From $0=\frac{d}{dt}L_{3}$ we have $\log (\varphi _{1}(t)\varphi _{3}(t))
$ is a constant function of $t$, and thus for some $B\in (0,\infty )$%
\begin{equation*}
\varphi _{1}(t)=\frac{B}{\varphi _{3}(t)},\;t\in \lbrack 0,T].
\end{equation*}%
Using this equation and the equality $L_{1}=\frac{d}{dt}L_{2}$ we have $-%
\frac{\varphi _{3}^{\prime }}{\varphi _{3}}=\theta (1-\varphi _{3})$, whose
solution is given by 
\begin{equation*}
\varphi _{3}(t)=\frac{1}{1-Ae^{\theta t}},\mbox{ for some  }A\in (-\infty
,e^{-\theta T}),\;t\in \lbrack 0,T].
\end{equation*}%
Therefore 
\begin{equation}
\varphi _{1}(t)=B(1-Ae^{\theta t}),\quad \varphi _{2}(t)=\frac{1}{%
B(1-Ae^{\theta t})},\quad \varphi _{3}(t)=\frac{1}{1-Ae^{\theta t}},\quad
t\in \lbrack 0,T].  \label{eq:varphi_family}
\end{equation}%
With $A,B$ chosen to satisfy appropriate boundary conditions, the trajectory 
$(\xi ,\zeta )$ will be the candidate minimizer in the definition of $%
I_{\gamma ,T}^{\ast }$.

{Since the terminal value $\xi(T)$ in the optimization problem $I_{\gamma, T}^*$ is unspecified,} the boundary conditions are given by the transversality condition ({see \cite[Pages 145 and 156]{tro}} and the
proof of Lemma \ref{lem:minimizer-proof} for the role played by this
condition) 
\begin{equation}
L_{2}\Big|_{t=T}=0  \label{eq:transversal}
\end{equation}%
and the initial and terminal value constraints $\xi (0)=x_{0}$, $\zeta (0)=0$
and $\zeta (T)=\gamma T$. These together give the unique $(A,B)$ and the
trajectories $(\xi ,\zeta )$, as explained below. In view of Corollary \ref%
{lem:no_Skorohod}, and since $(\xi ^{\prime },\zeta ^{\prime })$ satisfy %
\eqref{eq:equation1} and \eqref{eq:equation2}, we have 
\begin{equation*}
\xi ^{\prime }+\theta \varphi _{3}\xi =\lambda \varphi _{1}-\mu \varphi _{2},
\end{equation*}%
which gives the solution 
\begin{equation*}
\frac{(1-A)e^{\theta t}}{1-Ae^{\theta t}}\xi (t)=x_{0}+\frac{e^{\theta t}-1}{%
\theta }\left[ \lambda B(1-A)-\frac{\mu }{B(1-Ae^{\theta t})}\right] ,
\end{equation*}%
namely 
\begin{equation}
\xi (t)=\frac{e^{-\theta t}-A}{1-A}x_{0}+\frac{\lambda B}{\theta }(e^{\theta
t}-1)(e^{-\theta t}-A)+\frac{\mu }{\theta B}\frac{e^{-\theta t}-1}{1-A}.
\label{eq:equation_xi}
\end{equation}%
Since $\zeta ^{\prime }$ satisfies \eqref{eq:equation2} we get 
\begin{align}
\zeta (t)& =\lambda Bt-\frac{\mu t}{B}+\frac{\mu }{\theta B}\log \frac{%
1-Ae^{\theta t}}{1-A}+\frac{1-e^{-\theta t}}{1-A}\left[ x_{0}-\frac{\lambda
B(1-A)}{\theta }+\frac{\mu }{\theta B}\right]   \notag \\
& =\frac{\lambda B}{\theta }\left[ \theta t-1+e^{-\theta t}\right] +\frac{%
\mu }{\theta B}\left[ \log \frac{e^{-\theta t}-A}{1-A}-\frac{e^{-\theta t}-A%
}{1-A}+1\right] +\frac{1-e^{-\theta t}}{1-A}x_{0}.  \label{eq:equation_zeta}
\end{align}%
From \eqref{eq:equation_xi} and \eqref{eq:equation_zeta} we have 
\begin{equation}
\xi (t)+\zeta (t)=x_{0}+\frac{\lambda B}{\theta }\left[ \theta t-A(e^{\theta
t}-1)\right] +\frac{\mu }{\theta B}\log \frac{e^{-\theta t}-A}{1-A}.
\label{eq:equation_xi+zeta}
\end{equation}%
From \eqref{eq:transversal} we have 
\begin{equation}
B(1-Ae^{\theta T})=1.  \label{eq:equation-transversal}
\end{equation}%

\begin{remark}
\label{rmk:LLN} Taking $A=0$ and $B=1$ in \eqref{eq:equation_xi} and %
\eqref{eq:equation_zeta} gives the LLN trajectory 
\begin{align}
\xi_0(t) & = e^{-\theta t} x_0 + \frac{\lambda-\mu}{\theta} (1-e^{-\theta
t}),  \label{eq:xi_LLN} \\
\zeta_0(t) & = (\lambda-\mu)t + (1-e^{-\theta t}) (x_0 - \frac{\lambda-\mu}{%
\theta}),  \label{eq:zeta_LLN}
\end{align}
with $I_T(\xi_0,\zeta_0)=0$.
\end{remark}

Equalities in \eqref{eq:equation_xi}--\eqref{eq:equation-transversal} will
determine the minimizer $(\xi, \zeta)$, and the associated $\varphi$, in the
definition of $I^*_{\gamma,T}$. Let 
\begin{equation*}
\Lambda(A,T) \doteq \frac{e^{-\theta T}-A}{1-A}.
\end{equation*}

We will now argue that for every $\gamma>0$, there is a unique $(A,B)$ that
satisfy \eqref{eq:equation_xi}--\eqref{eq:equation-transversal} together with
the terminal condition $\zeta(T)= \gamma T$. The latter condition, along
with \eqref{eq:equation_zeta} and \eqref{eq:equation-transversal}, leads to
the following equation for $A$ (see Lemma \ref{lem:termcond}): 
\begin{equation}  \label{eq:A-def}
\begin{aligned} \frac{1}{1-Ae^{\theta T}} & = \frac{1}{2\lambda \left[
\theta T - 1 + e^{-\theta T} \right]} \Big\{ \theta\left[ \gamma T - x_0 +
x_0 \Lambda(A,T) \right] \\ & \quad +\left(\theta^2\left[ \gamma T - x_0 +
x_0\Lambda(A,T) \right]^2 - 4\lambda \left[ \theta T - 1 + e^{-\theta T}
\right] \mu \left[ \log \Lambda(A,T)- \Lambda(A,T) + 1 \right]\right)^{1/2}
\Big\}. \end{aligned}
\end{equation}

The following lemma gives existence and uniqueness of solutions to the above
equation and also gives the asymptotic behavior of the corresponding $(A,B)$
as $T\rightarrow \infty $. The resulting $(\xi ,\zeta )$ introduced in
Construction \ref{constr:minimizer} below will play a key role in the
analysis.

\begin{lemma}
\label{lem:minimizer-existence} \phantomsection
Suppose $\gamma > 0$. Then the following hold.

\begin{enumerate}[(a)]

\item There exists a unique $A \in (-\infty,e^{-\theta T})$ that satisfies %
\eqref{eq:A-def}. 

\item The unique $A \equiv A(x_0, T)$, satisfies, uniformly for $x_0$ in
compacts, as $T \to \infty$, $1-Ae^{\theta T} \to \frac{2\lambda}{\gamma+%
\sqrt{\gamma^2+4\lambda\mu}} = z_\gamma$. In particular, $A \to 0$ and $B
\doteq \frac{1}{1-Ae^{\theta T}} \to z_\gamma^{-1}$, uniformly for $x_0$ in
compacts, as $T\to \infty$.
\end{enumerate}
\end{lemma}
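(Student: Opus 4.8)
The plan is to turn \eqref{eq:A-def} into a scalar equation $\Psi(u)=0$ by a change of variables, prove $\Psi$ has a unique positive root by controlling the sign of $\Psi'$, and then read off the $T\to\infty$ asymptotics from a priori bounds on $B\doteq(1-Ae^{\theta T})^{-1}$ together with a passage to the limit in the quadratic relation underlying \eqref{eq:A-def}.

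\emph{Reduction.} Set $P\doteq\theta T-1+e^{-\theta T}$, which is $>0$ for $T>0$ (it vanishes at $T=0$ with positive derivative $\theta(1-e^{-\theta T})$). For $A\in(-\infty,e^{-\theta T})$ write $B\doteq(1-Ae^{\theta T})^{-1}\in(0,\infty)$, $\Lambda\doteq\Lambda(A,T)\in(0,1)$, $Q\doteq\gamma T-x_0(1-\Lambda)$, and $S\doteq\log\Lambda-\Lambda+1<0$ (since $\Lambda<1$ for $T>0$). The right-hand side of \eqref{eq:A-def} is the larger root of $\lambda P b^2-\theta Q b+\mu S=0$; as the product of its two roots equals $\mu S/(\lambda P)<0$, the larger root is the unique positive one, so \eqref{eq:A-def} is equivalent to
\begin{equation*}
\lambda P B^2-\theta Q B+\mu S=0,\qquad B>0.\tag{$\star$}
\end{equation*}
Eliminating $A$ between $B=(1-Ae^{\theta T})^{-1}$ and $\Lambda=(e^{-\theta T}-A)/(1-A)$ gives $B=\beta u$ with $u\doteq(1-\Lambda)/\Lambda\in(0,\infty)$, $\beta\doteq(e^{\theta T}-1)^{-1}$, $\Lambda=(1+u)^{-1}$, and $A\mapsto B\mapsto u$ is a bijection of $(-\infty,e^{-\theta T})$ onto $(0,\infty)$ onto $(0,\infty)$. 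Substituting into $(\star)$ and clearing the factor $1+u$, the equation becomes $\Psi(u)=0$, where
\begin{equation*}
\Psi(u)\doteq\lambda P\beta^2u^2(1+u)-\theta\beta u\big[\gamma T(1+u)-x_0u\big]+\mu\big[u-(1+u)\log(1+u)\big].
\end{equation*}
Thus (a) is equivalent to: $\Psi$ has exactly one zero in $(0,\infty)$. One computes $\Psi(0)=0$, $\Psi'(0)=-\theta\beta\gamma T<0$ (this is where the hypothesis $\gamma>0$ enters), and $\Psi(u)\to+\infty$ as $u\to\infty$, the leading term being $\lambda P\beta^2u^3$. The crucial observation is $\Psi'''(u)=6\lambda P\beta^2+\mu(1+u)^{-2}>0$, so $\Psi'$ is strictly convex on $(0,\infty)$; being negative at $0$ and tending to $+\infty$, a strictly convex function has a single zero $u^*>0$, with $\Psi'<0$ on $(0,u^*)$ and $\Psi'>0$ on $(u^*,\infty)$. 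Hence $\Psi$ strictly decreases then strictly increases, and since $\Psi(0)=0$ and $\Psi(\infty)=+\infty$ it has exactly one zero $u_0\in(u^*,\infty)$, which yields the unique $A\in(-\infty,e^{-\theta T})$ solving \eqref{eq:A-def}. This proves (a).

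\emph{Asymptotics.} Let $A=A(x_0,T)$ be this solution and $B=B(x_0,T)=(1-Ae^{\theta T})^{-1}$, so $(\star)$ holds with $\Lambda=(1+B(e^{\theta T}-1))^{-1}$. Fix a compact $K\subset[0,\infty)$. From $(\star)$, $\mu(-S)=B(\lambda P B-\theta Q)>0$, so $B>\theta Q/(\lambda P)\ge\theta(\gamma T-\max K)/(\lambda\theta T)\to\gamma/\lambda>0$ uniformly in $x_0\in K$; thus $B$ is bounded below away from $0$ for $T$ large. For an upper bound, $\lambda P B^2=\theta Q B-\mu S\le\theta\gamma T B+\mu\log(1+Be^{\theta T})$ and $P\ge\theta T/2$ for $T\ge2/\theta$; a sequence along which $B\to\infty$ would, upon dividing by $TB$, force the left side to diverge while the right side stays bounded, a contradiction, so $B$ is bounded above uniformly for $x_0\in K$ and $T$ large. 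With $B$ thus pinched into a fixed interval $[\delta,M]$, $\Lambda$ lies between constant multiples of $e^{-\theta T}$, so $\log\Lambda=-\theta T+O(1)$, $S/(\theta T)\to-1$, $P/(\theta T)\to1$, $Q/T\to\gamma$, all uniformly in $x_0\in K$. Dividing $(\star)$ by $\theta T$ gives $\lambda B^2-\gamma B-\mu+o(1)=0$, so every limit point $b\in[\delta,M]$ of $B$ solves $\lambda b^2-\gamma b-\mu=0$ and, being positive, equals $\frac{\gamma+\sqrt{\gamma^2+4\lambda\mu}}{2\lambda}=z_\gamma^{-1}$. Hence $B\to z_\gamma^{-1}$, and as all estimates are uniform in $x_0\in K$ so is the convergence; then $1-Ae^{\theta T}=B^{-1}\to z_\gamma$ and $A=e^{-\theta T}(1-B^{-1})\to0$, uniformly for $x_0$ in compacts. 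This proves (b).

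\emph{Main obstacle.} The genuinely substantive step is the uniqueness in (a): identifying the change of variables $u=(1-\Lambda)/\Lambda$ that renders \eqref{eq:A-def} a tractable scalar equation, and then exploiting $\Psi'''>0$ — even though $\Psi$ itself is neither monotone nor convex — to fix the sign pattern of $\Psi'$ and hence the number of roots of $\Psi$. The asymptotic estimates in (b) are uniform but routine once the a priori bounds on $B$ are in place.
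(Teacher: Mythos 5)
Your proof is correct. Part (b) follows essentially the same strategy as the paper: derive two\mbox{-}sided a priori bounds on $B=(1-Ae^{\theta T})^{-1}$ (lower bound directly from the equation, upper bound by contradiction on the orders of the two sides), conclude $\log\Lambda(A,T)=-\theta T+O(1)$, and pass to the limit in the defining relation to identify the unique positive root of $\lambda b^2-\gamma b-\mu=0$. Part (a), however, takes a genuinely different route. The paper divides both sides of \eqref{eq:A-def} by $\log\frac{1-A}{e^{-\theta T}-A}$ and proves that the resulting left side $f_1$ is strictly increasing while the right side $f_2$ is strictly decreasing in $A$, with existence from the intermediate value theorem; the uniqueness step there ultimately rests on verifying that $z\mapsto -(\log z-z+1)/(\log z)^2$ is increasing on $(0,1)$. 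You instead observe that \eqref{eq:A-def} selects the unique positive root of the quadratic $\lambda P b^2-\theta Qb+\mu S=0$ (since the product of roots $\mu S/(\lambda P)$ is negative), so the equation is equivalent to $(\star)$, and the substitution $u=(1-\Lambda)/\Lambda$, $B=\beta u$ converts it into a single scalar equation $\Psi(u)=0$ with $\Psi(0)=0$, $\Psi'(0)=-\theta\beta\gamma T<0$, $\Psi(\infty)=+\infty$ and the manifestly positive third derivative $\Psi'''=6\lambda P\beta^2+\mu(1+u)^{-2}$; the sign pattern of $\Psi'$ (negative then positive) then forces exactly one root in $(0,\infty)$. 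I checked the algebra: $\beta u=e^{-\theta T}/(e^{-\theta T}-A)=B$, the expansion of $\Psi$ after clearing the factor $1+u$, and the values $\Psi(0)=0$, $\Psi'(0)=-\theta\beta\gamma T$ are all correct, and the equivalence with \eqref{eq:A-def} is legitimate because $B>0$ is automatic for $A<e^{-\theta T}$. What your approach buys is transparency in the uniqueness step — a strictly positive third derivative replaces the paper's two separate monotonicity verifications, one of which is a not-entirely-obvious calculus fact — at the cost of an extra change of variables; it also makes the role of the hypothesis $\gamma>0$ visible in a single place, namely $\Psi'(0)<0$.
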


\begin{proof}
	(a) Denote the right side of \eqref{eq:A-def} by $R(A,T)$ and the left side of the same display by $L(A,T)$.
	Since $\log x - x + 1 \le 0$ for every $x\ge 0$, $R(A,T)$ is well defined for every $A < e^{-\theta T}$.
	Let
	\begin{align*}
		f_1(A)  \doteq \frac{1}{\log \frac{1-A}{e^{-\theta T}-A}} \cdot L(A,T), \; \;
		f_2(A)  \doteq \frac{1}{\log \frac{1-A}{e^{-\theta T}-A}} \cdot R(A,T).
	\end{align*}	
	Note that (for fixed $T$) as $A \downarrow -\infty$,
	\begin{equation*}
		L(A,T) \to 0, \quad
		R(A,T) \to \frac{\theta \gamma T }{\lambda \left[ \theta T - 1 + e^{-\theta T} \right]} > 0.
	\end{equation*}
	So $f_1(A) < f_2(A)$ for sufficiently small $A$.
	Also note that as $A \uparrow e^{-\theta T}$, 
	\begin{equation}
		\label{eq:f1f2}
		L(A,T) = O\left( \frac{1}{e^{-\theta T}-A}\right), \quad
		R(A,T) = O\left( \sqrt{\log \frac{1}{e^{-\theta T}-A}}\right).
	\end{equation}
	So $f_1(A) > f_2(A)$ for $A$ sufficiently close to $e^{-\theta T}$.
	Since $f_1(A),f_2(A)$ are continuous in $A$, there must exist some $A$ such that they are equal, which gives existence of $A \in (-\infty,e^{-\theta T})$.

	For uniqueness, it suffices to verify that 
	\begin{align}
		A & \mapsto f_1(A) \mbox{ is strictly increasing in } A \in (-\infty,e^{-\theta T}), \label{eq:uniqueness_1} \\
		A & \mapsto f_2(A) \mbox{ is strictly decreasing in } A \in (-\infty,e^{-\theta T}). \label{eq:uniqueness_2}
	\end{align}
	For \eqref{eq:uniqueness_1}, since $\frac{1-A}{e^{-\theta T}-A} > 1$, it suffices to show that
	\begin{equation*}
		A \mapsto g_1(A) \doteq (e^{-\theta T}-A) \log \frac{1-A}{e^{-\theta T}-A} \mbox{ is strictly decreasing in } A \in (-\infty,e^{-\theta T}).
	\end{equation*}
	This follows on observing that
	\begin{equation*}
		g_1'(A) = \log \frac{e^{-\theta T}-A}{1-A} - \frac{e^{-\theta T}-A}{1-A} + 1 < 0.
	\end{equation*}
	This proves  \eqref{eq:uniqueness_1}.	
	For \eqref{eq:uniqueness_2}, let $z \doteq \frac{e^{-\theta T}-A}{1-A} = \Lambda(A,T)$. 
	Note that $z \in (0,1)$ and is strictly decreasing in $A \in (-\infty,e^{-\theta T})$.
	It then suffices to show
	\begin{equation*}
		z \mapsto \frac{\theta\left[ \gamma T  - x_0 + z x_0 \right] + \sqrt{\theta^2\left[ \gamma T  - x_0 + z x_0 \right]^2 - 4\lambda \left[ \theta T - 1 + e^{-\theta T} \right] \mu \left[ \log z - z + 1 \right]}}{-\log z}
	\end{equation*}
	is strictly increasing in $z \in (0,1)$.
	Since $\frac{\gamma T  - x_0 + z x_0}{-\log z}$ is strictly increasing in $z \in (0,1)$, and $x+\sqrt{x^2+C}$ is increasing in $x \in \Rmb$ for any  $C\in (0,\infty)$, it suffices to show
	\begin{equation*}
		z \mapsto \frac{- 4\lambda \left[ \theta T - 1 + e^{-\theta T} \right] \mu \left[ \log z - z + 1 \right]}{(\log z)^2} \mbox{ is increasing in } z \in (0,1).
	\end{equation*}
	Since $\theta T - 1 + e^{-\theta T} > 0$,
	it suffices in turn to show
	\begin{equation*}
		z \mapsto \frac{-(\log z - z + 1)}{(\log z)^2} \mbox{ is increasing in } z \in (0,1).
	\end{equation*}
	However, this is easy to verify, proving \eqref{eq:uniqueness_2}.
	This completes the proof of (a).

	(b) Fix a compact $K \subset \Rmb_+$.
	We first claim that
	\begin{equation*}
		\liminf_{T \to \infty} \inf_{x_0 \in K} (1-Ae^{\theta T}) \ge c_0 \mbox{ for some } c_0 > 0.
	\end{equation*}
	Indeed, note that if $\lim_{T \to \infty} Ae^{\theta T} = 1$ for some sequence $\{x_T\} \subset K$, where $A \equiv A(x_T,T)$, then
	(as $T\to \infty$)
	\begin{equation*}
		L(A,T) = O\left( \frac{1}{1-Ae^{\theta T}}\right) \mbox{ and }
		R(A,T) = O\left( \sqrt{\frac{1}{T}\log \frac{1}{e^{-\theta T}-A}}\right),
	\end{equation*}
	which says that $L(A,T)/R(A,T) \to \infty$ as $T\to \infty$ and contradicts the fact that $L(A,T)= R(A,T)$.
	Therefore the claim holds.
	Since $\gamma > 0$ and since $A = A(x_T, T)$ is the solution of \eqref{eq:A-def},
	it follows  that
	\begin{equation*}
		\liminf_{T \to \infty} \inf_{x_0 \in K} \frac{1}{1-Ae^{\theta T}} \ge \frac{\gamma}{\lambda}.
	\end{equation*}
	So for sufficiently large $T$ and uniformly for $x_0 \in K$, we have
	\begin{equation*}
		0 < \half c_0 \le 1-Ae^{\theta T} \le \frac{2\lambda}{\gamma}
	\end{equation*}
	and hence, uniformly for $x_0 \in K$,
	\begin{equation*}
		\log \Lambda(A,T) = \log \frac{e^{-\theta T}-A}{1-A} = -\theta T + \log \frac{1-Ae^{\theta T}}{1-A} = -\theta T + O(1).
	\end{equation*}
	Applying this back to \eqref{eq:A-def} gives
	\begin{equation*}
		\lim_{T \to \infty} \sup_{x_0 \in K} \left| \frac{1}{1-Ae^{\theta T}} - \frac{\gamma+\sqrt{\gamma^2+4\lambda\mu}}{2\lambda} \right| = 0.
	\end{equation*}
	This completes the proof.	
\end{proof}

\begin{construction}
\label{constr:minimizer} Let $A \in (-\infty,e^{-\theta T})$ be the unique
solution of \eqref{eq:A-def} and let $B = \frac{1}{1-Ae^{\theta T}}$. Define 
$(\bar\xi,\bar\zeta)$ as in \eqref{eq:equation_xi} and %
\eqref{eq:equation_zeta} with this choice of $(A,B)$.
\end{construction}

The following lemma shows that $(\bar \xi, \bar \zeta)$ constructed above
have the correct terminal value.

\begin{lemma}
\label{lem:termcond} Suppose $\gamma>0$. The function ${\bar{\zeta}}$
satisfies $\bar \zeta(T)=\gamma T$.
\end{lemma}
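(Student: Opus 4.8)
This is a direct algebraic verification: equation \eqref{eq:A-def} was obtained precisely by imposing $\zeta(T)=\gamma T$ on the explicit formula \eqref{eq:equation_zeta}, so the plan is to run that derivation in reverse. First I would record the two facts supplied by Construction \ref{constr:minimizer}: that $B(1-Ae^{\theta T})=1$ (i.e.\ \eqref{eq:equation-transversal} holds), and that $A\in(-\infty,e^{-\theta T})$ solves \eqref{eq:A-def}. From $A<e^{-\theta T}<1$ one gets $1-A>0$ and $1-Ae^{\theta T}>0$, hence $B>0$; and $T>0$ gives $P\doteq\theta T-1+e^{-\theta T}>0$ by strict convexity of $x\mapsto e^{-x}$. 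Writing $\Lambda\doteq\Lambda(A,T)=\frac{e^{-\theta T}-A}{1-A}\in(0,1)$ and using the identity $1-\Lambda=\frac{1-e^{-\theta T}}{1-A}$, evaluating \eqref{eq:equation_zeta} at $t=T$ yields
\begin{equation*}
\bar\zeta(T)=\frac{\lambda B}{\theta}\,P+\frac{\mu}{\theta B}\,(\log\Lambda-\Lambda+1)+(1-\Lambda)x_0.
\end{equation*}

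Next I would extract from \eqref{eq:A-def} the key identity $\frac{\lambda B}{\theta}P+\frac{\mu}{\theta B}(\log\Lambda-\Lambda+1)=\gamma T-x_0+x_0\Lambda$. Set $G\doteq\gamma T-x_0+x_0\Lambda$ and $H\doteq\log\Lambda-\Lambda+1\le 0$; then \eqref{eq:A-def} reads $2\lambda PB=\theta G+(\theta^2G^2-4\lambda P\mu H)^{1/2}$, where the quantity under the root is nonnegative since $H\le 0$. Consequently $2\lambda PB-\theta G\ge 0$, so squaring is reversible: $(2\lambda PB-\theta G)^2=\theta^2G^2-4\lambda P\mu H$, and after cancelling $\theta^2G^2$ and dividing by $4\lambda P>0$ this becomes $\lambda PB^2-\theta GB+\mu H=0$; dividing by $\theta B>0$ gives $\frac{\lambda B}{\theta}P+\frac{\mu}{\theta B}H=G$.

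Finally, substituting this into the displayed expression for $\bar\zeta(T)$ gives $\bar\zeta(T)=G+(1-\Lambda)x_0=(\gamma T-x_0+x_0\Lambda)+(x_0-x_0\Lambda)=\gamma T$, which is the claim. (The same computation read forwards is what the preceding lemma \ref{lem:minimizer-existence} relies on to produce \eqref{eq:A-def} in the first place, so one could alternatively simply cite that derivation.)

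\textbf{Main obstacle.} There is no conceptual difficulty here; the only point requiring care is that the passage between \eqref{eq:A-def} and the quadratic relation $\lambda PB^2-\theta GB+\mu H=0$ is an equivalence and not merely an implication. This rests on the sign facts $P>0$, $B>0$, $H\le 0$ recorded at the outset, which are exactly what makes the $+$ branch of the square root in \eqref{eq:A-def} consistent with the inequality $2\lambda PB-\theta G\ge 0$ needed to square without losing information.
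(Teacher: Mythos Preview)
Your argument is correct and is essentially the same as the paper's: both observe that \eqref{eq:A-def} together with $B=\frac{1}{1-Ae^{\theta T}}$ yields the quadratic relation $\lambda P B^2-\theta G B+\mu H=0$, which upon division by $\theta B$ and comparison with \eqref{eq:equation_zeta} at $t=T$ gives $\bar\zeta(T)=\gamma T$. Your version is more careful about the sign bookkeeping (e.g.\ $P>0$, $H\le 0$, reversibility of the squaring), but in fact only the forward implication from \eqref{eq:A-def} to the quadratic is needed here, so that extra care, while not wrong, is not strictly required for this lemma.
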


\begin{proof}
	From \eqref{eq:A-def} and the definition $B = \frac{1}{1-Ae^{\theta T}}$ we see that $B$ is one of the solution to the equation
	\begin{equation*}
		\lambda \left[ \theta T - 1 + e^{-\theta T} \right] B^2 - \theta\left[ \gamma T - x_0 + x_0 \frac{e^{-\theta T}-A}{1-A} \right] B +\mu \left[ \log \frac{e^{-\theta T}-A}{1-A} - \frac{e^{-\theta T}-A}{1-A} + 1 \right] = 0.
	\end{equation*}
	It then follows from \eqref{eq:equation_zeta} that the function $\zetabar$ satisfies $\bar \zeta(T)=\gamma T$.
\end{proof}

\subsection{Properties of the Candidate Minimizer.}

The pair $(\bar{\xi},\bar{\zeta})$ introduced in Construction \ref%
{constr:minimizer} is our candidate minimizer for $I_{\gamma ,T}^{\ast }$.
In this section we study some properties of these trajectories. In
particular we show that $(\bar{\xi},\bar{\zeta})\in {\mathcal{C}}_{T}$,
where ${\mathcal{C}}_{T}$ is defined at the beginning of Section \ref%
{sec:proprf}, when $T$ is sufficiently large. We will occasionally denote $(%
\bar{\xi},\bar{\zeta})$ as $(\bar{\xi}_{x_{0}},\bar{\zeta}_{x_{0}})$ in
order to emphasize dependence on the initial condition. Note that $(\bar{\xi}%
,\bar{\zeta})$ also depends on $T$ but that dependence is suppressed in the
notation.

The following lemma says that with $(A,B)$ as identified in Lemma \ref{lem:minimizer-existence}, the state
process ${\bar{\xi}}$ given in Construction \ref{constr:minimizer} never
goes below $0$ and actually is away from $0$ for $t > 0$.

\begin{lemma}
\label{lem:minimizer-property} Suppose $\gamma>0$. Let $({\bar{\xi}},{\bar{%
\zeta}})$ be as given in Construction \ref{constr:minimizer}. For every
compact $K \subset {\mathbb{R}}_+$, there exists some $T_0 \in (0,\infty)$
such that the following hold for every $T\ge T_0$:

\begin{enumerate}[(a)] 

\item Suppose $x_{0}\in K$. Then ${\bar{\xi}}(t)>0$ for all $t\in (0,T]$ and $({\bar{\xi}},{\bar{\zeta}})\in {%
\mathcal{C}}_{T}$.

\item Suppose $x_0 \in K \setminus \{0\}$. Then there exists some $c_0 \in
(0,\infty)$ such that $c_0^{-1} \le {\bar{\xi}}(t) \le c_0$, $|{\bar{\xi}}%
^{\prime }(t)| \le c_0$ and $c_0^{-1} \le {\bar{\zeta}}^{\prime }(t) \le c_0$
for all $t\in [0,T]$.
\end{enumerate}
\end{lemma}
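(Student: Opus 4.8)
The plan is to argue directly from the explicit formulas \eqref{eq:equation_xi}--\eqref{eq:equation_zeta} for $(\xibar,\zetabar)$, using the asymptotics $A\to0$, $B\to z_\gamma^{-1}$ (uniformly for $x_0$ in compacts) from Lemma~\ref{lem:minimizer-existence}(b) together with the elementary facts about $z_\gamma$: since $z(\cdot)$ is strictly decreasing with $z(0)=\sqrt{\lambda/\mu}$ we have $z_\gamma^{-1}>\sqrt{\mu/\lambda}$ for $\gamma>0$, and from $\mu z_\gamma^2+\gamma z_\gamma=\lambda$ one gets $\lambda z_\gamma^{-1}-\mu z_\gamma=\gamma$ and $\mu z_\gamma<\lambda$ (the latter being $\mu<\lambda+\gamma$). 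Recall from Section~\ref{sec:formal} and Corollary~\ref{lem:no_Skorokhod} that with $(\varphi_1,\varphi_2,\varphi_3)$ as in \eqref{eq:varphi_family} (so $\varphi_1\varphi_2=1$), $\xibar$ solves the linear ODE $\xibar'+\theta\varphi_3\xibar=g$ with $\xibar(0)=x_0$, where $g\doteq\lambda\varphi_1-\mu/\varphi_1$, and $\zetabar$ is the antiderivative of $\theta\varphi_3\xibar$ vanishing at $0$; hence $\xibar(t)=e^{-\Phi(t)}x_0+\int_0^te^{-(\Phi(t)-\Phi(s))}g(s)\,ds$ with $\Phi(t)=\int_0^t\theta\varphi_3(r)\,dr$. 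The first step is to record, for $T\ge T_0(K)$ and $x_0\in K$, the uniform estimates: $1-Ae^{\theta t}$ is monotone in $t$ with endpoint values $1-A$ and $B^{-1}$, both bounded away from $0$ and $\infty$, so $\theta\varphi_3\in[\underline c,\bar c]$ and $\Phi(t)-\Phi(s)\in[\underline c(t-s),\bar c(t-s)]$ for positive parameter-dependent constants $\underline c,\bar c$; $\varphi_1$ is monotone with $\varphi_1(T)=1\ge\sqrt{\mu/\lambda}$ and $\varphi_1(0)=B(1-A)\to z_\gamma^{-1}>\sqrt{\mu/\lambda}$, hence $\varphi_1\ge\sqrt{\mu/\lambda}$ on $[0,T]$ and therefore $0\le g\le\bar g$; and, since also $\varphi_1(T/2)\to z_\gamma^{-1}$, there is $g_1>0$ with $g\ge g_1$ on $[0,T/2]$.

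For part (a): $g\ge0$ gives $\xibar\ge0$, and since $g$ is continuous with $g(0)>0$ the integral term is strictly positive for $t>0$, so $\xibar(t)>0$ on $(0,T]$. Then $(\xibar,\zetabar)\in\Cmc_T$: $\xibar(0)=x_0$ and $\zetabar(0)=0$ by evaluating \eqref{eq:equation_xi}--\eqref{eq:equation_zeta} at $t=0$; $\xibar,\zetabar$ are smooth, hence absolutely continuous; $\zetabar'=\theta\varphi_3\xibar\ge0$, so $\zetabar$ is nondecreasing and $\mathbb{R}_+$-valued; and $\{\xibar=0\}\subset\{0\}$ is Lebesgue-null, so condition (ii) holds vacuously.

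For part (b), fix $x_0\in K\setminus\{0\}$. From the integral representation, $g\le\bar g$ and $\Phi(t)-\Phi(s)\ge\underline c(t-s)$ give $\xibar\le x_0+\bar g/\underline c$. For the lower bound I split $[0,T]$ (for $T\ge T_0(K)$) into $[0,t_*]$, $[t_*,T/2]$, $[T/2,T]$, where $t_*\doteq\bar c^{-1}\log2$ depends only on the parameters: on $[0,t_*]$, $\Phi(t)\le\log2$ so $\xibar(t)\ge e^{-\Phi(t)}x_0\ge x_0/2$; on $[t_*,T/2]$, using $g\ge g_1$ there, $\xibar(t)\ge g_1\int_0^te^{-\bar c(t-s)}\,ds\ge g_1/(2\bar c)$; on $[T/2,T]$ the resolvent of the linear ODE has decayed, so I substitute $t=T-s$ with $s\in[0,T/2]$ (hence $e^{-\theta t}=e^{-\theta T}e^{\theta s}\to0$ and $Ae^{\theta t}=(1-B^{-1})e^{-\theta s}$) into \eqref{eq:equation_xi} and pass to the limit using $A\to0$, $B\to z_\gamma^{-1}$, obtaining uniformly for $s\in[0,T/2]$ and $x_0\in K$ that $\xibar(T-s)\to\phi(s)\doteq\frac\gamma\theta-\frac{\lambda}{\theta z_\gamma}(1-z_\gamma)e^{-\theta s}$; since $\inf_{s\ge0}\phi(s)=\min(\frac\gamma\theta,\frac{\lambda-\mu z_\gamma}\theta)>0$ (here the identities $\lambda z_\gamma^{-1}-\mu z_\gamma=\gamma$ and $\mu z_\gamma<\lambda$ are used), $\xibar\ge c_2>0$ on $[T/2,T]$ for $T\ge T_0(K)$. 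Collecting the three regimes gives $\xibar\ge\min(x_0/2,g_1/(2\bar c),c_2)>0$ on $[0,T]$. Finally $\xibar'=g-\theta\varphi_3\xibar$ is bounded and $\zetabar'=\theta\varphi_3\xibar$ lies in a fixed compact subinterval of $(0,\infty)$, so a suitable $c_0$ (dominating $x_0+\bar g/\underline c$, $\min(x_0/2,g_1/(2\bar c),c_2)^{-1}$, and the sup-norms of $\xibar'$, $\zetabar'$, $1/\zetabar'$) yields the claim.

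The step I expect to be the main obstacle is the neighborhood of the terminal time $t=T$: the explicit formula does not converge uniformly on all of $[0,T]$ as $T\to\infty$ because of the $e^{\theta t}$ term there, and the exponential decay of the linear-ODE resolvent makes a Gronwall bound run backward from mid-horizon ineffective, so one must carry out the rescaling $s=T-t$ and handle the separate limiting profile $\phi$, whose positivity is exactly where the algebraic relations for $z_\gamma$ --- and hence the standing hypothesis $\lambda\ge\mu$ together with $\gamma>0$ --- enter. (When $\lambda>\mu$ strictly, $\varphi_1$ stays bounded away from $\sqrt{\mu/\lambda}$ on all of $[0,T]$, so $g$ is bounded below by a positive constant throughout and the lower bound in (b) is immediate from the integral representation; it is the borderline case $\lambda=\mu$, where $g(T)=\lambda-\mu=0$, that forces the terminal analysis.)
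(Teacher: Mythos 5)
Your proof is correct, and both parts ultimately rest on the same two inputs as the paper's argument --- the transversality identity $B(1-Ae^{\theta T})=1$ and the uniform asymptotics $A\to 0$, $B\to z_\gamma^{-1}>\sqrt{\mu/\lambda}$ from Lemma \ref{lem:minimizer-existence}(b) --- but the packaging differs. For part (a) the paper divides \eqref{eq:equation_xi} by $1-e^{-\theta t}$ and runs a two-case algebraic estimate according to the sign of $A$, whereas you use the variation-of-constants formula and show the forcing $g=\lambda\varphi_1-\mu\varphi_2$ is nonnegative via monotonicity of $\varphi_1$ and its endpoint values; these are essentially the same case split in different clothing, and your version has the small bonus of making $\xibar(t)>0$ for $t>0$ transparent from $g(0)>0$. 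The real divergence is in part (b): the paper's proof is a one-liner --- since $x_0>0$, part (a) plus continuity of $\xibar$ on the compact $[0,T]$ gives $\inf_{[0,T]}\xibar>0$ for each fixed $T$, which suffices because the quantifier structure of the lemma allows $c_0$ to depend on $T$ (and $x_0$). Your three-regime decomposition with the boundary-layer analysis at $t=T$ and the limiting profile $\phi(s)=\gamma/\theta-\frac{\lambda}{\theta z_\gamma}(1-z_\gamma)e^{-\theta s}$ (whose positivity you correctly reduce to $\lambda z_\gamma^{-1}-\mu z_\gamma=\gamma$ and $\mu z_\gamma<\lambda$) is sound, and I checked the limit computation, but it proves the strictly stronger statement that the lower bound is uniform in $T\ge T_0$; the ``main obstacle'' you identify near the terminal time is therefore not an obstacle for the lemma as stated. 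The extra uniformity is not used downstream (Lemma \ref{lem:minimizer-proof} explicitly remarks that $\delta=\inf_{[0,T]}\xibar$ may depend on $x_0$), so you could shorten part (b) to the compactness argument, though your stronger bound is a legitimate byproduct.
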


\begin{proof}
	(a) From \eqref{eq:equation_xi} and since $e^{-\theta T} -A>0$, we see that, for any $t \in (0,T]$, 
	\begin{equation}
		\label{eq:minimizer-property-pf1}
		\frac{\bar\xi(t)}{1-e^{-\theta t}} \ge \frac{\lambda B}{\theta}(1-Ae^{\theta t}) - \frac{\mu}{\theta B} \frac{1}{1-A}.
	\end{equation}
	
	If $A \equiv A(x_0,T) > 0$, then the right side of \eqref{eq:minimizer-property-pf1} is decreasing in $t$.
	Therefore
	\begin{align*}
		\frac{\bar\xi(t)}{1-e^{-\theta t}} & \ge \frac{\lambda B}{\theta}(1-Ae^{\theta T}) - \frac{\mu}{\theta B} \frac{1}{1-A} \\
		&=  \frac{\lambda-\mu}{\theta}+ \frac{\mu}{\theta} - \frac{\mu}{\theta} \frac{1-Ae^{\theta T}}{1-A} = \frac{\lambda-\mu}{\theta}+\frac{\mu}{\theta} \frac{A}{1-A}(e^{\theta T}-1) > \frac{\lambda-\mu}{\theta} \ge 0,
	\end{align*}
	where the first equality on the second line uses \eqref{eq:equation-transversal} and the last inequality follows since $\lambda \ge \mu$.
	
	Now consider the case $A \le 0$.
	From Lemma \ref{lem:minimizer-existence}(b) and the fact that $z_{\gamma}^{-1} > \sqrt{\frac{\mu}{\la}}$ we can find some $T_0<\infty$ such that 
	\begin{equation}\label{eq:infinf}
		\inf_{T \ge T_0} \inf_{x_0 \in K} B > \sqrt{\frac{\mu}{\la}}.
		\end{equation}
	Therefore for all $T \ge T_0$ and $x_0 \in K$, whenever $A \le 0$, we have $B(1-A) > \sqrt{\frac{\mu}{\la}}$.
	Applying this to \eqref{eq:minimizer-property-pf1} gives
	\begin{equation*}
		\frac{\xibar(t)}{1-e^{-\theta t}} \ge \frac{\lambda B}{\theta}(1-A) - \frac{\mu}{\theta B} \frac{1}{1-A} > 0
	\end{equation*}
	for all $t \in (0,T]$. This proves the first statement in (a). The second statement is immediate from the first  since $(\xibar,\zetabar)$ satisfy \eqref{eq:equation1}--\eqref{eq:equation2}.
	This completes the proof of part (a).

	(b) From \eqref{eq:equation2} and \eqref{eq:varphi_family} we have $\zetabar'(t) = \frac{\theta \xibar(t)}{1-Ae^{\theta t}}$.
	Since $x_0>0$, from part (a) we have that $\inf_{t \in [0,T]} \xibar(t) > 0$ for all $T\ge T_0$. 
	Now for every $T \ge T_0$, from \eqref{eq:infinf} and since $-\infty < A < e^{-\theta T}$ and $\sup_{x_0\in K}|\xibar|_{*,T} < \infty$, we have that $c_1^{-1} \le \zetabar'(t) \le c_1$ for some $c_1 \in (0,\infty)$.
	Finally from \eqref{eq:equation1} and \eqref{eq:varphi_family} we see that for every $T\ge T_0$, there exists some $c_2<\infty$ such that $|\xibar'(t)| \le c_2$.
	This completes the proof.
\end{proof}

\subsection{Cost Asymptotics for the Candidate Minimizer.}

The following lemma calculates $I_T({\bar{\xi}},{\bar{\zeta}})$ for $({\bar{%
\xi}},{\bar{\zeta}})$ introduced in Construction \ref{constr:minimizer}.

\begin{lemma}
\label{lem:minimizer-cost} Suppose $\gamma > 0$. Fix a compact $K\subset {%
\mathbb{R}}_+$. Then with $T_0 \in (0,\infty)$ as in Lemma \ref%
{lem:minimizer-property}, for all $T\ge T_0$, $x_0\in K$ and $({\bar{\xi}},{%
\bar{\zeta}})$ as introduced in Construction \ref{constr:minimizer}, 
\begin{align*}
I_T({\bar{\xi}},{\bar{\zeta}}) & = (\gamma T+{\bar{\zeta}}(T)-x_0) \log B - {%
\bar{\xi}}(T) \log \frac{1}{1-Ae^{\theta T}} + x_0 \log \frac{1}{1-A} \\
& \quad + \lambda T + \mu T - \frac{\lambda B}{\theta} [\theta T -
A(e^{\theta T}-1)] + \frac{\mu}{B\theta} \log \frac{e^{-\theta T}-A}{1-A}.
\end{align*}
Furthermore 
\begin{equation*}
\lim_{T \to \infty} \frac{I_T({\bar{\xi}},{\bar{\zeta}})}{T} = C(\gamma)
\end{equation*}
uniformly for $x_0$ in $K$ where $C(\gamma)$ is defined as in \eqref{cgamma}.
\end{lemma}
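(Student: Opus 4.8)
\emph{Overview.} The lemma has two parts: an exact identity for $I_T(\bar\xi,\bar\zeta)$ and the asymptotics $I_T(\bar\xi,\bar\zeta)/T\to C(\gamma)$. The plan is to obtain the identity by substituting the explicit minimizing control into the cost functional and integrating, and then to read off the limit using the asymptotics of $(A,B)$ supplied by Lemma~\ref{lem:minimizer-existence}(b).

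\emph{Step 1 (reduction to an explicit integral).} By Lemma~\ref{lem:minimizer-property}(a), for $T\ge T_0$ and $x_0\in K$ the pair $(\bar\xi,\bar\zeta)$ lies in $\mathcal{C}_T$ with $\bar\xi(t)>0$ for all $t\in(0,T]$, so Corollary~\ref{lem:no_Skorokhod} applies. By construction, $(\bar\xi,\bar\zeta)$ solves \eqref{eq:equation1}--\eqref{eq:equation2} with $\varphi=(\varphi_1,\varphi_2,\varphi_3)$ given by \eqref{eq:varphi_family}, so $\varphi\in\mathcal{U}_+(\bar\xi,\bar\zeta)$; moreover this $\varphi$ has $\varphi_1\varphi_2\equiv1$, so (solving the quadratic $\lambda\varphi_1-\mu/\varphi_1=\bar\xi'+\bar\zeta'$ for $\varphi_1$ and using $\bar\xi>0$ on $(0,T]$) it coincides a.e.\ with the minimizer exhibited in Corollary~\ref{lem:no_Skorokhod}. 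Hence
\[ I_T(\bar\xi,\bar\zeta)=\int_0^T\Big(\lambda\,\ell(\varphi_1(s))+\mu\,\ell(\varphi_2(s))+\theta\,\bar\xi(s)\,\ell(\varphi_3(s))\Big)\,ds, \]
with $\varphi$ as in \eqref{eq:varphi_family}.

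\emph{Step 2 (the closed form).} Expand $\ell(x)=x\log x-x+1$ and use the structural identities $\varphi_1\varphi_2\equiv1$, $\varphi_1\varphi_3\equiv B$ and \eqref{eq:equation1}--\eqref{eq:equation2} (i.e.\ $\lambda\varphi_1-\mu\varphi_2=\bar\xi'+\bar\zeta'$, $\theta\bar\xi\varphi_3=\bar\zeta'$). The logarithmic part of the integrand collapses to $\bar\xi'(s)\log\varphi_1(s)+\bar\zeta'(s)\log B$, and the remainder to $\lambda+\mu+\theta\bar\xi(s)-\lambda\varphi_1(s)-\mu\varphi_2(s)-\bar\zeta'(s)$. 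The one nonelementary integral, $\int_0^T\bar\xi'(s)\log\varphi_1(s)\,ds$, is treated by integration by parts: the boundary term at $T$ vanishes because $\varphi_1(T)=B(1-Ae^{\theta T})=1$ by the transversality relation \eqref{eq:equation-transversal}, and the remaining term $\theta\int_0^T\bar\xi(s)\frac{Ae^{\theta s}}{1-Ae^{\theta s}}\,ds=\theta\int_0^T\bar\xi(s)(\varphi_3(s)-1)\,ds=\bar\zeta(T)-\theta\int_0^T\bar\xi(s)\,ds$ then cancels the $\theta\int_0^T\bar\xi(s)\,ds$ and (using $\bar\zeta(T)=\gamma T$ from Lemma~\ref{lem:termcond}) the $\int_0^T\bar\zeta'(s)\,ds$ contribution. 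The two remaining integrals are elementary: $\int_0^T\varphi_1(s)\,ds=\frac{B}{\theta}\big[\theta T-A(e^{\theta T}-1)\big]$ and, via the substitution $u=e^{\theta s}$ and partial fractions, $\int_0^T\varphi_2(s)\,ds=-\frac{1}{B\theta}\log\frac{e^{-\theta T}-A}{1-A}$. Collecting terms (and using $B=\frac{1}{1-Ae^{\theta T}}$) gives
\[ I_T(\bar\xi,\bar\zeta)=(\bar\zeta(T)-x_0)\log B+x_0\log\frac{1}{1-A}+\lambda T+\mu T-\frac{\lambda B}{\theta}\big[\theta T-A(e^{\theta T}-1)\big]+\frac{\mu}{B\theta}\log\frac{e^{-\theta T}-A}{1-A}, \]
which is the identity asserted in the statement; a quick check is that all logarithmic coefficients vanish when $A=0$, $B=1$, so $I_T=0$ there, consistent with Remark~\ref{rmk:LLN}.

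\emph{Step 3 (the limit).} Divide by $T$ and send $T\to\infty$. By Lemma~\ref{lem:minimizer-existence}(b), uniformly for $x_0\in K$ one has $A\to0$, $B\to z_\gamma^{-1}$ and $1-Ae^{\theta T}\to z_\gamma$; consequently $A(e^{\theta T}-1)=\big(1-(1-Ae^{\theta T})\big)-A$ is bounded, and $\log\frac{e^{-\theta T}-A}{1-A}=-\theta T+\log\frac{1-Ae^{\theta T}}{1-A}=-\theta T+O(1)$, while $\bar\zeta(T)=\gamma T$ by Lemma~\ref{lem:termcond} and $x_0$ ranges over the bounded set $K$. Thus every term that is $O(1)$ in $T$ disappears after division by $T$, and the surviving contributions give
\[ \frac{I_T(\bar\xi,\bar\zeta)}{T}\;\longrightarrow\;\gamma\log z_\gamma^{-1}+\lambda+\mu-\lambda z_\gamma^{-1}-\mu z_\gamma=\lambda\big(1-z_\gamma^{-1}\big)+\mu\big(1-z_\gamma\big)-\gamma\log z_\gamma=C(\gamma), \]
by \eqref{cgamma}; uniformity over $x_0\in K$ follows because all inputs from Lemma~\ref{lem:minimizer-existence}(b) are uniform and $K$ is bounded. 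The bulk of the work is Step~2 — keeping the many cancellations in the integrand straight and carrying out the integration by parts carefully enough to reach the clean closed form; given that identity and Lemma~\ref{lem:minimizer-existence}(b), Step~3 is a short computation.
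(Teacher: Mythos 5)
Your proof is correct and follows essentially the same route as the paper: substitute the explicit control \eqref{eq:varphi_family} into the cost, use $\varphi_1\varphi_2\equiv 1$, $\varphi_1\varphi_3\equiv B$ and \eqref{eq:equation1}--\eqref{eq:equation2} to collapse the integrand, integrate by parts using the transversality relation \eqref{eq:equation-transversal}, and then invoke Lemma \ref{lem:minimizer-existence}(b) for the limit. Note that your closed form $(\bar\zeta(T)-x_0)\log B+\cdots$ agrees with the paper's derived identity $(\bar\xi(T)+\gamma T-x_0)\log B-\bar\xi(T)\log\frac{1}{1-Ae^{\theta T}}+\cdots$ (since $\log B=\log\frac{1}{1-Ae^{\theta T}}$ and $\bar\zeta(T)=\gamma T$), whereas the formula printed in the lemma statement, with $\bar\zeta(T)$ in place of $\bar\xi(T)$ in the first term, appears to be a typo; your version is the correct one.
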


\begin{proof}
	It follows from Lemma \ref{lem:minimizer-property}(a) that for  all $T\ge T_0$ and $x_0 \in K$,
	$(\xibar,\zetabar) = (\xibar_{x_0},\zetabar_{x_0})\in \Cmc_T$.
	In the following sequence of equalities we use \eqref{eq:varphi_family} for the second equality, \eqref{eq:equation1}--\eqref{eq:equation2} for the third, 
	the relation $\varphi_3'=\theta\varphi_3(\varphi_3-1)$ given above \eqref{eq:varphi_family} for the fourth, 
	and \eqref{eq:varphi_family} once more for the fifth equality. 
	\begin{align}
		\label{eq:cost}
	\begin{aligned}
		I_T(\bar\xi,\bar\zeta) & = \lambda \int_0^T \ell(\varphi_1(t))\,dt + \mu \int_0^T \ell(\varphi_2(t))\,dt + \theta \int_0^T \bar\xi(t) \ell(\varphi_3(t))\,dt \\
		& = \int_0^T \Big[ -\lambda\varphi_1(t) \log \frac{\varphi_3(t)}{B} - \lambda\varphi_1(t) + \lambda + \mu\varphi_2(t) \log \frac{\varphi_3(t)}{B} - \mu\varphi_2(t) + \mu  \\
		& \quad  + \theta\bar\xi(t)\varphi_3(t) \log \varphi_3(t) - \theta\bar\xi(t)\varphi_3(t) + \theta\bar\xi(t) \Big] dt \\
		& = \int_0^T \Big[ -\bar\xi'(t) \log \varphi_3(t) +(\bar\xi'(t)+\bar\zeta'(t)) \log B - \lambda\varphi_1(t) + \lambda\\
		&\quad  - \mu\varphi_2(t) + \mu - \theta\bar\xi(t)\varphi_3(t) + \theta\bar\xi(t) \Big] dt \\
		& = -\bar\xi(t) \log \varphi_3(t) \Big|_{t=0}^T + (\bar\xi(t)+\bar\zeta(t)) \log B \Big|_{t=0}^T + \int_0^T \left[ - \lambda\varphi_1(t) + \lambda - \mu\varphi_2(t) + \mu \right] dt \\
		& = (\bar\xi(T)+\gamma T-x_0) \log B - \bar\xi(T) \log \frac{1}{1-Ae^{\theta T}} + x_0 \log \frac{1}{1-A} \\
		& \quad + \lambda T + \mu T - \frac{\lambda B}{\theta} [\theta T - A(e^{\theta T}-1)] + \frac{\mu}{B\theta} \log \frac{e^{-\theta T}-A}{1-A}.
	\end{aligned}
	\end{align}
	This proves the first statement in the lemma.
	For the second statement, we will use Lemma \ref{lem:minimizer-existence}(b) which says that $1-Ae^{\theta T} \to z_\gamma$, $A \to 0$ and $B \to z_\gamma^{-1}$ as $T \to \infty$ uniformly for $x_0 \in K$.
	From these uniform convergence properties and  \eqref{eq:equation_xi} we have
	\begin{equation*}
		\frac{\xibar(T)}{T} = \frac{1}{T} \left( \frac{e^{-\theta T}-A}{1-A} x_0 + \frac{\lambda B}{\theta} (e^{\theta T}-1)(e^{-\theta T}-A) + \frac{\mu}{\theta B} \frac{e^{-\theta T}-1}{1-A} \right) \to 0,
	\end{equation*}
	uniformly for $x_0 \in K$, as $T \to \infty$.
	Similarly,
	\begin{align*}
		\frac{\lambda B}{\theta T} [\theta T - A(e^{\theta T}-1)] & = \lambda B - \frac{\lambda B}{\theta T} \left( Ae^{\theta T}-1+1-A \right) \to \lambda z_\gamma^{-1}, \\
		\frac{\mu}{B\theta T} \log \frac{e^{-\theta T}-A}{1-A} & = \frac{\mu}{B\theta T} \left( \log \frac{1-Ae^{\theta T}}{1-A} - \theta T \right) \to - \mu z_\gamma,
	\end{align*}
	uniformly for $x_0 \in K$, as $T \to \infty$.
	Combining these we have
	\begin{equation*}
		 \frac{I_T(\xibar,\zetabar)}{T} \to \gamma \log z_\gamma^{-1} + \lambda + \mu - \lambda z_\gamma^{-1} - \mu z_\gamma = C(\gamma),
	\end{equation*}
	uniformly for $x_0\in K$, as $T\to \infty$.
\end{proof}

\subsection{Verification of the Minimizer Property when $x_0 > 0$.}

In this section we will show that when $x_0>0$ and $\gamma>0$, $%
(\bar\xi,\bar\zeta)$ defined in Construction \ref{constr:minimizer} is the
minimizer in the variational problem for $I^*_{\gamma,T}$ when $T$ is
sufficiently large. Let 
\begin{equation}  \label{eq:Jmc}
{\mathcal{J}}(x_0, \gamma, T) \doteq \{ (\xi,\zeta) \in {\mathcal{C}}_T :
\zeta(T)=\gamma T \}.
\end{equation}
We will frequently suppress $x_0$ and $T$ from the notation and simply write 
${\mathcal{J}}(\gamma)$ for ${\mathcal{J}}(x_0, \gamma, T)$.

\begin{lemma}
\label{lem:minimizer-proof} Suppose $K\subset {\mathbb{R}}_+$ is compact and 
$\gamma > 0$. Let $(\bar\xi,\bar\zeta)$ be as in Construction \ref%
{constr:minimizer}. Then there exists some $T_1 \in(0,\infty)$ such that $%
I_{\gamma, T}^* = I_T({\bar{\xi}},{\bar{\zeta}})$ for all $T\ge T_1$ and $%
x_0 \in K \setminus \{0\}$.
\end{lemma}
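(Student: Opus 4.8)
The plan is to show that the candidate $(\bar\xi,\bar\zeta)$ from Construction \ref{constr:minimizer} is the global minimizer over ${\mathcal{J}}(\gamma)$ by exploiting the convexity of $I_T$ on ${\mathcal{C}}_T$ established in Lemma \ref{lem:convexity}. Since ${\mathcal{J}}(\gamma)$ is a convex subset of ${\mathcal{C}}_T$ (it is the intersection of ${\mathcal{C}}_T$ with the affine constraint $\zeta(T)=\gamma T$, and one checks ${\mathcal{C}}_T$ itself is convex), and $(\bar\xi,\bar\zeta)$ is an interior-type point in the sense that $\bar\xi(t)>0$ for $t\in(0,T]$ by Lemma \ref{lem:minimizer-property}(a), it suffices to verify a first-order optimality (variational inequality) condition: for every $(\xi,\zeta)\in{\mathcal{J}}(\gamma)$ with $I_T(\xi,\zeta)<\infty$, the directional derivative of $I_T$ at $(\bar\xi,\bar\zeta)$ in the direction $(\xi-\bar\xi,\zeta-\bar\zeta)$ is nonnegative. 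By convexity this directional derivative is bounded above by $I_T(\xi,\zeta)-I_T(\bar\xi,\bar\zeta)$, so nonnegativity of the former gives $I_T(\bar\xi,\bar\zeta)\le I_T(\xi,\zeta)$, which is what we want.

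First I would fix $T_1\ge T_0$ (with $T_0$ from Lemma \ref{lem:minimizer-property}) large enough that Lemma \ref{lem:minimizer-property}(b) applies, so that on $K\setminus\{0\}$ the trajectory $\bar\xi$ is bounded away from $0$ and from $\infty$, $|\bar\xi'|$ is bounded, and $\bar\zeta'$ is bounded away from $0$ and $\infty$; this is exactly the regularity needed to differentiate $I_T(\bar\xi+r(\xi-\bar\xi),\bar\zeta+r(\zeta-\bar\zeta))$ in $r$ at $r=0$ and to justify interchanging the derivative with the integral $\int_0^T L$ via Lemma \ref{lem:rewrite}. Using the smoothness of $L$ on the region $\{x>0\}$ and the Euler--Lagrange identities computed in Section \ref{sec:formal} (recall $L_1=\frac{d}{dt}L_2$ and $\frac{d}{dt}L_3=0$ hold along $(\bar\xi,\bar\zeta)$), an integration by parts reduces the directional derivative to boundary terms at $t=0$ and $t=T$. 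The contributions at $t=0$ vanish because every competitor in ${\mathcal{J}}(\gamma)$ shares the initial data $\xi(0)=x_0$, $\zeta(0)=0$, so $\xi-\bar\xi$ and $\zeta-\bar\zeta$ vanish at $0$. At $t=T$, the term involving $\zeta-\bar\zeta$ vanishes because $\zeta(T)=\bar\zeta(T)=\gamma T$, and the term involving $\xi-\bar\xi$ carries the factor $L_2|_{t=T}$, which is zero by the transversality condition \eqref{eq:transversal} built into Construction \ref{constr:minimizer}. Hence the directional derivative is $0\ge 0$, and we are done for competitors staying in the smooth region.

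The main obstacle will be handling competitors $(\xi,\zeta)\in{\mathcal{J}}(\gamma)$ whose $\xi$ touches $0$ on a set of positive measure, or more generally where the path $r\mapsto \bar\xi+r(\xi-\bar\xi)$ enters the region $\{x=0\}$ where $L$ is only lower semicontinuous (and where the ``$\infty\cdot\mathbf{1}_{\{x=0,q>0\}}$'' term lives). Here I would argue as follows: on $\{x=0\}$ condition (ii) in the definition of ${\mathcal{C}}_T$ forces $\zeta'=0$, so the reneging-cost term is finite and convexity of $L$ (Lemma \ref{lem:convexity}) still applies globally. Convexity of $I_T$ then gives, for $r\in(0,1)$, $I_T(\bar\xi+r(\xi-\bar\xi),\bar\zeta+r(\zeta-\bar\zeta))\le (1-r)I_T(\bar\xi,\bar\zeta)+rI_T(\xi,\zeta)$; rearranging, $\frac{1}{r}\bigl(I_T(\bar\xi+r(\cdot),\ldots)-I_T(\bar\xi,\bar\zeta)\bigr)\le I_T(\xi,\zeta)-I_T(\bar\xi,\bar\zeta)$. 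Since for small $r$ the perturbed path $\bar\xi+r(\xi-\bar\xi)$ stays strictly positive on compact subsets of $(0,T]$ (because $\bar\xi$ is bounded below there) and has the same initial/terminal constraints, the left side can be evaluated by the smooth-region computation above and shown to be $\ge 0$ (possibly after a further limiting argument near $t=0$ using the boundedness of $\bar\xi'$ and $\ell(1)=0$, $\ell\ge0$). Letting $r\downarrow 0$ yields $I_T(\xi,\zeta)\ge I_T(\bar\xi,\bar\zeta)$. Combined with $(\bar\xi,\bar\zeta)\in{\mathcal{J}}(\gamma)$ (Lemma \ref{lem:termcond} and Lemma \ref{lem:minimizer-property}(a)), this gives $I^*_{\gamma,T}=I_T(\bar\xi,\bar\zeta)$ for all $T\ge T_1$ and $x_0\in K\setminus\{0\}$, completing the proof.
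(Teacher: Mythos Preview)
Your proposal is correct and follows essentially the same route as the paper: use convexity of $I_T$ on the convex set ${\mathcal{J}}(\gamma)$, compute the directional derivative at $(\bar\xi,\bar\zeta)$ via Lemma \ref{lem:rewrite}, integrate by parts using the Euler--Lagrange relations $L_1=\frac{d}{dt}L_2$, $\frac{d}{dt}L_3=0$, and kill the boundary terms with the initial/terminal constraints and the transversality condition \eqref{eq:transversal}. The paper packages this as a contradiction argument (assume a strictly better competitor, set $g(\varepsilon)=I_T((1-\varepsilon)(\bar\xi,\bar\zeta)+\varepsilon(\tilde\xi,\tilde\zeta))$, show $g'_+(0)=0$ and invoke convexity), but the computation is identical to yours.

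One remark: your ``main obstacle'' paragraph is more elaborate than necessary. Since $x_0\in K\setminus\{0\}$, Lemma \ref{lem:minimizer-property}(b) gives $\bar\xi\ge c_0^{-1}>0$ on all of $[0,T]$, so for any competitor $\xi\ge 0$ and any $r\in(0,\tfrac14)$ the convex combination $(1-r)\bar\xi+r\xi\ge (1-r)c_0^{-1}$ is uniformly positive on $[0,T]$; no limiting argument near $t=0$ is needed. The paper's only substantive additional work beyond your sketch is producing the explicit integrable dominator that justifies differentiating under the integral, using the bounds of Lemma \ref{lem:minimizer-property}(b) together with $I_T(\tilde\xi,\tilde\zeta)<\infty$.
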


\begin{proof}
	By Lemma \ref{lem:rewrite},
	\begin{align*}
		I_{\gamma, T}^* & = \inf_{(\xi,\zeta) \in \Jmc(\gamma)} I_T(\xi,\zeta) = \inf_{(\xi,\zeta) \in \Jmc(\gamma)} \int_0^T L(\xi(t),\xi'(t),\zeta'(t)) \, dt.
	\end{align*}
	Thus, in view of Lemma \ref{lem:termcond} and Lemma \ref{lem:minimizer-property}(a), it suffices to show that, with $T_0$ as in Lemma \ref{lem:minimizer-property}, there is a $T_1\ge T_0$ such that for all $T\ge T_1$ and $x_0 \in K \setminus \{0\}$
	$(\xibar,\zetabar)$ is the minimizer of the function
	\begin{equation*}
		G(\xi,\zeta) \doteq \int_0^T L(\xi(t),\xi'(t),\zeta'(t)) \, dt, \quad (\xi,\zeta) \in \Jmc(\gamma).
	\end{equation*}
	We will prove this via contradiction.
	
	First note that $\Jmc(\gamma)$ is a convex subset of $\Cmc_T$ and $G$ is a convex function on $\Jmc(\gamma)$ by Lemma \ref{lem:convexity}.
	Now suppose there exists some $(\xitil,\zetatil) \in \Jmc(\gamma)$ such that $G(\xitil,\zetatil) < G(\xibar,\zetabar)$.
	We will show that this leads to a contradiction.
	From Lemma \ref{lem:minimizer-cost},  for $T\ge T_0$
	we have $G(\xibar,\zetabar) < \infty$.
	For $\varepsilon \in [0,1]$, consider the family of paths $(\xi^\varepsilon,\zeta^\varepsilon) \doteq (1-\varepsilon) (\xibar,\zetabar) + \varepsilon (\xitil,\zetatil)$. 
	Also consider the convex function $g$ on $[0,1]$ defined by
	$g(\varepsilon) \doteq G(\xi^\varepsilon,\zeta^\varepsilon)$ for $\varepsilon \in [0,1]$.  Note that $g(1) = G(\xitil,\zetatil) < G(\xibar,\zetabar) = g(0)$.
	It follows from the convexity that $g$ is left and right differentiable wherever it is finite.
	We will show that $g'_+(0)=0$, where $g'_+(\cdot)$ denotes the right derivative of $g$.
	The convexity of $g$ will then give the desired contradiction.

	Let $\Delta \xi \doteq \xitil-\xibar$ and $\Delta \zeta \doteq \zetatil-\zetabar$.
Then
	\begin{align}
		g(\varepsilon) & = \int_0^T L(\xi^\varepsilon(t),(\xi^\varepsilon)'(t),(\zeta^\varepsilon)'(t)) \, dt \notag \\
		& = \int_0^T L(\xibar(t) + \varepsilon \Delta \xi(t), \xibar'(t) + \varepsilon \Delta \xi'(t),  \zetabar'(t) + \varepsilon \Delta \zeta'(t)) \, dt. \label{eq:minimizer-proof-g}
	\end{align}
	We claim that we can differentiate with respect to $\varepsilon$ under the integral sign for $0 < \varepsilon < \quarter$.
	Suppose for the moment that the claim is true.
	Then
	\begin{align*}
		g'(0+) & = \int_0^T \left( L_1(\xibar(t), \xibar'(t),  \zetabar'(t)) \Delta \xi(t) + L_2(\xibar(t), \xibar'(t),  \zetabar'(t)) \Delta \xi'(t) \right. \\
		& \qquad \quad \left. +  L_3(\xibar(t), \xibar'(t),  \zetabar'(t)) \Delta \zeta'(t) \right) dt
	\end{align*}
	Recall from Section \ref{sec:formal} that $(\xibar,\zetabar)$ is such that $L_1=\frac{d}{dt}L_2$, $0=\frac{d}{dt}L_3$ and $L_2\big|_{t=T} = 0$ (when evaluated at $(\bar \xi, \bar \xi', \bar \zeta, \bar \zeta')$).
	It then follows from integration by parts that
	\begin{align*}
		g'(0+) & = \left( L_2(\xibar(t), \xibar'(t),  \zetabar'(t)) \Delta \xi(t) + L_3(\xibar(t), \xibar'(t),  \zetabar'(t)) \Delta \zeta(t) \right) \Big|_{t=0}^{t=T}.
	\end{align*} 
	Since $\Delta \xi(0) = 0$, $L_2(\xibar(T), \xibar'(T),  \zetabar'(T)) = 0$, $\Delta \zeta(0)=0=\Delta \zeta(T)$, we have $g'(0+)=0$.
	This gives the desired contradiction and shows that $(\xibar,\zetabar)$ is the minimizer.

	Finally we prove the claim that in \eqref{eq:minimizer-proof-g} we can differentiate under the integral sign for $0 < \varepsilon < \quarter$.
	Denote the integrand in \eqref{eq:minimizer-proof-g} by $\tilde g(t, \varepsilon)$, namely
	\begin{equation*}
		\gtil(t,\varepsilon) \doteq L(\xibar(t) + \varepsilon \Delta \xi(t), \xibar'(t) + \varepsilon \Delta \xi'(t),  \zetabar'(t) + \varepsilon \Delta \zeta'(t)).
	\end{equation*}
	It suffices to establish an integrable bound on $\frac{\partial \gtil(t,\varepsilon)}{\partial \varepsilon}$ that is uniform for $0 < \varepsilon < \quarter$.	
	From the formula for $L$ in Section \ref{sec:proprf} and recalling that $\xi^{\varepsilon}>0$, we have
	\begin{align*}
		\frac{\partial \gtil(t,\varepsilon)}{\partial \varepsilon} & = L_1(\xi^\varepsilon(t),(\xi^\varepsilon)'(t),(\zeta^\varepsilon)'(t)) \Delta \xi(t) + L_2(\xi^\varepsilon(t),(\xi^\varepsilon)'(t),(\zeta^\varepsilon)'(t)) \Delta \xi'(t) \\
		& \quad +  L_3(\xi^\varepsilon(t),(\xi^\varepsilon)'(t),(\zeta^\varepsilon)'(t)) \Delta \zeta'(t) \\
		& = \Delta \xi(t) \left( \theta - \frac{(\zeta^\varepsilon)'(t)}{\xi^\varepsilon(t)} \right) + \Delta \zeta'(t) \log \frac{(\zeta^\varepsilon)'(t)}{\theta \xi^\varepsilon(t)} \\
		& \quad + \left( \Delta \xi'(t) + \Delta \zeta'(t) \right) \log \frac{\sqrt{((\xi^\varepsilon)'(t) + (\zeta^\varepsilon)'(t))^2 + 4\lambda\mu} + (\xi^\varepsilon)'(t) + (\zeta^\varepsilon)'(t)}{2\lambda}.
	\end{align*}
	From Lemma \ref{lem:minimizer-property}(b) we have (by choosing a larger $T_0$ if needed) $\delta \doteq \inf_{t \in [0,T]} \xibar(t) > 0$ (we remark that $\delta$ may depend on $x_0$).
	Then there exists some $\kappa \in (0,\infty)$ such that for all  $0 < \varepsilon < \quarter$ and $t \in [0,T]$,
	\begin{equation}
		\label{eq:xi_vareps_bd}
		\frac{\delta}{2} < \xi^\varepsilon(t) \le \kappa.
	\end{equation}	
	It then follows from continuity of $\xitil$ and $\xibar$ that
	\begin{equation}
		\label{eq:interchange1}
		\left| \Delta \xi(t) \left( \theta - \frac{(\zeta^\varepsilon)'(t)}{\xi^\varepsilon(t)} \right) \right| \le \kappa_1(1 + \zetatil'(t) + \zetabar'(t)).
	\end{equation} 
	Next, by Lemma \ref{lem:minimizer-property}(b) and \eqref{eq:xi_vareps_bd}, 
	\begin{align}
		\label{eq:secinteg}
		\left| \Delta \zeta'(t) \log \frac{(\zeta^\varepsilon)'(t)}{\theta \xi^\varepsilon(t)} \right| \le \kappa_2  +  \zetatil'(t) \log \zetatil'(t).
	\end{align}
	Since $G(\xitil,\zetatil) < G(\xibar,\zetabar) < \infty$, we must have $\zetabar', \zetatil'$ and $\zetatil'(\cdot) \log \zetatil'(\cdot)$
	are all in $ L^1([0,T])$.
	Therefore, the right side of the above display and the right side of  \eqref{eq:interchange1} are integrable functions $[0,T]$. 
	Finally, by Lemma \ref{lem:minimizer-property}(b) 
	\begin{align*}
		& \left| \left( \Delta \xi'(t) + \Delta \zeta'(t) \right) \log \frac{\sqrt{((\xi^\varepsilon)'(t) + (\zeta^\varepsilon)'(t))^2 + 4\lambda\mu} + (\xi^\varepsilon)'(t) + (\zeta^\varepsilon)'(t)}{2\lambda} \right| \\
		& \le \kappa_3 + \kappa_3 |\xitil'(t)+\zetatil'(t))| \log \frac{\sqrt{(\xitil'(t)+\zetatil'(t))^2 + 4\lambda\mu} + \xitil'(t)+\zetatil'(t)}{2\lambda}.
	\end{align*}	
	Again, since $G(\xitil,\zetatil) < G(\xibar,\zetabar) < \infty$, we must have that the right side of the above display is an integrable function on $[0,T]$.
	Combining this with the integrability of right sides of \eqref{eq:interchange1} and \eqref{eq:secinteg} we have an
	integrable uniform bound on $\frac{\partial \gtil(t,\varepsilon)}{\partial \varepsilon}$ for $\varepsilon \in (0, \quarter)$.
	This completes the proof.		
\end{proof}

\section{Proof of Theorem \protect\ref{thm:I*}.}

In this section we will complete the proof of Theorem \ref{thm:I*}.
Throughout the section $\lambda \ge \mu$. 
{Although Lemmas  \ref{lem:minimizer-cost} and \ref{lem:minimizer-proof} together immediately imply the first statement of Theorem \ref{thm:I*}, when $x_0>0$ and $\gamma>0$, they cannot be directly used to treat the remaining cases.
In order to treat the general case we will analyze $\limsup_{T \to \infty} \frac{I_{\gamma, T}^*}{T}$ for $\gamma \ge 0$ and $x_0\ge 0$ in Proposition \ref{prop:I*limsup} and $\liminf_{T \to \infty} \frac{I_{\gamma, T}^*}{T}$ in Proposition \ref{prop:I*liminf_gamma_positive} (for $\gamma > 0$ and $x_0\ge 0$ ) and Proposition \ref{prop:I*liminf_gamma_zero} (for $\gamma =0$ and $x_0\ge 0$).
These three propositions taken together cover the general case $x_0 \ge 0$ and $\gamma \ge 0$.
The proof of Proposition \ref{prop:I*liminf_gamma_positive} will make crucial use of Lemmas  \ref{lem:minimizer-cost} and \ref{lem:minimizer-proof}.}

We begin by establishing some useful properties of $I_{\gamma, T}^*$.

\begin{lemma}
\label{lem:IT_convex} The map $\gamma \mapsto I_{\gamma, T}^*$ is a
nonnegative, convex function on $[0,\infty)$, and $I_{\gamma^*_T,T}^*=0$
where $\gamma^*_T = (\lambda-\mu) + \frac{1-e^{-\theta T}}{T}(x_0 - \frac{%
\lambda-\mu}{\theta})$. In particular, the function $\gamma \mapsto
I_{\gamma, T}^*$ is decreasing for $\gamma \le \gamma^*_T$ and increasing
otherwise.
\end{lemma}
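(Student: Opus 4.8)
The plan is to deduce every assertion from three facts already established: $\ell \ge 0$ forces $I_T \ge 0$; the law-of-large-numbers trajectory of Remark \ref{rmk:LLN} has zero cost; and $I_T$ is convex on ${\mathcal{C}}_T$ by Lemma \ref{lem:convexity}. Nonnegativity of $\gamma \mapsto I_{\gamma,T}^*$ is immediate, since every term in the functional defining $I_T(\xi,\zeta)$ is nonnegative, hence $I_T \ge 0$ and $I_{\gamma,T}^* = \inf\{I_T(\xi,\zeta) : \zeta(T) = \gamma T\} \ge 0$.

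To show $I_{\gamma^*_T,T}^* = 0$, I would use the pair $(\xi_0,\zeta_0)$ of Remark \ref{rmk:LLN}, for which $I_T(\xi_0,\zeta_0) = 0$. A direct computation gives $\zeta_0(T) = (\lambda-\mu)T + (1-e^{-\theta T})(x_0 - \tfrac{\lambda-\mu}{\theta}) = \gamma^*_T T$. Moreover $(\xi_0,\zeta_0) \in {\mathcal{C}}_T$: $\xi_0(t)$ is a convex combination of $x_0 \ge 0$ and $\tfrac{\lambda-\mu}{\theta} \ge 0$ (using $\lambda \ge \mu$), hence $\xi_0 \ge 0$; $\zeta_0'(t) = (\lambda-\mu)(1-e^{-\theta t}) + \theta x_0 e^{-\theta t} \ge 0$, so $\zeta_0$ is nondecreasing and $\zeta_0 \ge 0$; the same expression shows $\xi_0(t) = 0$ is possible only when $x_0 = 0$ and $\lambda = \mu$, in which case $\zeta_0 \equiv 0$, so condition (ii) in the definition of ${\mathcal{C}}_T$ holds; the initial conditions are clear. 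Thus $(\xi_0,\zeta_0)$ is admissible in the variational problem defining $I_{\gamma^*_T,T}^*$, and with nonnegativity this gives $I_{\gamma^*_T,T}^* = 0$. (Note also $\gamma^*_T = (\lambda-\mu)\bigl(1 - \tfrac{1-e^{-\theta T}}{\theta T}\bigr) + \tfrac{1-e^{-\theta T}}{T} x_0 \ge 0$, so it lies in the stated domain $[0,\infty)$.)

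For convexity, I would first record that ${\mathcal{C}}_T$ is a convex subset of ${\mathcal{C}}([0,T]:{\mathbb{R}}_+^2)$: conditions (i) and (iii) are stable under convex combinations, and for (ii), if $(\xi^a,\zeta^a),(\xi^b,\zeta^b) \in {\mathcal{C}}_T$ and $r \in (0,1)$, then $(1-r)\xi^a(t) + r\xi^b(t) = 0$ forces $\xi^a(t) = \xi^b(t) = 0$, so at a.e. such $t$ both $(\zeta^a)'(t) = 0$ and $(\zeta^b)'(t) = 0$, hence the derivative of the combination vanishes there. Now fix $\gamma_0,\gamma_1 \ge 0$; if $I_{\gamma_0,T}^* = \infty$ or $I_{\gamma_1,T}^* = \infty$ the convexity inequality is trivial for this pair, so assume both are finite. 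Given $\epsilon > 0$, pick $(\xi^i,\zeta^i) \in {\mathcal{C}}_T$ with $\zeta^i(T) = \gamma_i T$ and $I_T(\xi^i,\zeta^i) \le I_{\gamma_i,T}^* + \epsilon$ (possible since $I_T = \infty$ off ${\mathcal{C}}_T$). For $r \in [0,1]$ the path $(\xi,\zeta) \doteq (1-r)(\xi^0,\zeta^0) + r(\xi^1,\zeta^1)$ lies in ${\mathcal{C}}_T$ and satisfies $\zeta(T) = ((1-r)\gamma_0 + r\gamma_1)T$, so by Lemma \ref{lem:convexity},
\[
I_{(1-r)\gamma_0 + r\gamma_1,\, T}^* \le I_T(\xi,\zeta) \le (1-r)I_T(\xi^0,\zeta^0) + rI_T(\xi^1,\zeta^1) \le (1-r)I_{\gamma_0,T}^* + rI_{\gamma_1,T}^* + \epsilon.
\]
Letting $\epsilon \downarrow 0$ yields convexity. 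The monotonicity statement is then the standard fact that a convex function on $[0,\infty)$ attaining its global minimum (here $0$, by nonnegativity) at $\gamma^*_T$ is nonincreasing on $[0,\gamma^*_T]$ and nondecreasing on $[\gamma^*_T,\infty)$, which follows from monotonicity of the difference quotients of a convex function.

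I expect the only point requiring genuine care to be the nonaffine constraint (ii) in the definition of ${\mathcal{C}}_T$: it must be verified both when placing the law-of-large-numbers trajectory $(\xi_0,\zeta_0)$ in ${\mathcal{C}}_T$ and when checking that ${\mathcal{C}}_T$ is convex; the remaining steps are routine.
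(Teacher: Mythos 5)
Your proof is correct and follows essentially the same route as the paper's: nonnegativity from $\ell\ge 0$, the zero value at $\gamma^*_T$ via the LLN trajectory of Remark \ref{rmk:LLN}, convexity by applying Lemma \ref{lem:convexity} to convex combinations of near-optimal paths, and monotonicity as a standard consequence. The extra details you supply (membership of $(\xi_0,\zeta_0)$ in ${\mathcal{C}}_T$ and convexity of ${\mathcal{C}}_T$, including condition (ii)) are correct and merely make explicit what the paper leaves implicit.
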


\begin{proof}
	Clearly $I_{\gamma, T}^* \ge 0$ for every $\gamma\ge 0$.
	From Remark \ref{rmk:LLN} we see that the LLN zero-cost trajectory $(\xi_0,\zeta_0)$ has total reneging $(\lambda-\mu)T + (1-e^{-\theta T})(x_0 - \frac{\lambda-\mu}{\theta})$ over $[0,T]$ which proves that $I_{\gamma^*_T,T}^*=0$.
	In order to see convexity, fix $\gamma_2 > \gamma_1 \ge 0$ and $\varepsilon \in (0,1)$.
	Let $\gamma \doteq \varepsilon\gamma_1 + (1-\varepsilon)\gamma_2$.
	For  $(\xi_i,\zeta_i) \in \Jmc(\gamma_i)$, $i=1,2$, let 
	\begin{equation*}
		(\xi,\zeta) \doteq \varepsilon (\xi_1,\zeta_1) + (1-\varepsilon) (\xi_2,\zeta_2).
	\end{equation*}
	Then $(\xi,\zeta) \in \Jmc(\gamma)$ and by Lemma \ref{lem:convexity}
	\begin{align*}
		I_{\gamma, T}^* \le I_T(\xi,\zeta) \le \varepsilon I_T(\xi_1,\zeta_1) + (1-\varepsilon) I_T(\xi_2,\zeta_2).
	\end{align*}
	Taking infimum over $(\xi_i,\zeta_i) \in \Jmc(\gamma_i)$, $i=1,2$, gives
	\begin{equation*}
		I_{\gamma, T}^* \le \varepsilon I_{\gamma_1,T}^* + (1-\varepsilon) I_{\gamma_2,T}^*.
	\end{equation*}
	The final monotonicity statement in the lemma is now immediate from convexity and nonnegativity.
\end{proof}
%
%
%
%
%
%
%
%
Note that for $\gamma \ge 0$, $z_{\gamma}$ given in \eqref{eq:z} is the
unique positive solution to 
\begin{equation}  \label{eq:z-equation}
\lambda z_\gamma^{-1} - \mu z_\gamma - \gamma = 0.
\end{equation}
Also note that the map $\gamma \mapsto C(\gamma)$ is continuous and $%
C(\lambda-\mu)=0$. The following proposition bounds the limit superior of $%
I^*_{\gamma,T}/T$ by $C(\gamma)$.

\begin{proposition}
\label{prop:I*limsup} Fix $\gamma \ge 0$. Then 
\begin{equation*}
\limsup_{T \to \infty} \frac{I_{\gamma, T}^*}{T} \le C(\gamma).
\end{equation*}
\end{proposition}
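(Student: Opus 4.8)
The plan is to prove this upper bound by exhibiting, for each sufficiently large $T$, an explicit element of $\Jmc(x_0,\gamma,T)$ whose cost is $C(\gamma)T + o(T)$, and then to invoke the definition $I_{\gamma,T}^* = \inf_{(\xi,\zeta)\in\Jmc(\gamma)} I_T(\xi,\zeta)$ from \eqref{eq:eqiii}--\eqref{eq:Jmc}. I will split into the cases $\gamma>0$ and $\gamma=0$, since the constructions of the preceding section were carried out under the standing assumption $\gamma>0$.

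For $\gamma>0$ the required trajectory is already at hand. Taking the compact set $K=\{x_0\}$ in Lemma~\ref{lem:minimizer-property}, there is $T_0<\infty$ such that for all $T\ge T_0$ the pair $(\bar\xi,\bar\zeta)$ of Construction~\ref{constr:minimizer} lies in $\Cmc_T$; moreover $\bar\zeta(T)=\gamma T$ by Lemma~\ref{lem:termcond}, so $(\bar\xi,\bar\zeta)\in\Jmc(\gamma)$. Hence $I_{\gamma,T}^*\le I_T(\bar\xi,\bar\zeta)$, and dividing by $T$ and sending $T\to\infty$, Lemma~\ref{lem:minimizer-cost} yields $\limsup_{T\to\infty} I_{\gamma,T}^*/T \le \lim_{T\to\infty} I_T(\bar\xi,\bar\zeta)/T = C(\gamma)$. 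Only the feasibility of $(\bar\xi,\bar\zeta)$ is used here, not its optimality, so Lemma~\ref{lem:minimizer-proof} is not invoked.

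For $\gamma=0$ I will build a simple ``ramp-then-rest'' trajectory. Since $\zeta$ is nondecreasing with $\zeta(0)=0$, the constraint $\zeta(T)=0$ forces $\zeta\equiv0$, and the relevant cost density from the formula for $L$ in Section~\ref{sec:proprf} is then $L(x,p,0)=\lambda\ell\!\left(\frac{\sqrt{p^2+4\lambda\mu}+p}{2\lambda}\right)+\mu\ell\!\left(\frac{\sqrt{p^2+4\lambda\mu}-p}{2\mu}\right)+\theta x\,\one_{\{x>0\}}$; in particular $L(0,0,0)=\lambda\ell(\sqrt{\mu/\lambda})+\mu\ell(\sqrt{\lambda/\mu})=\lambda+\mu-2\sqrt{\lambda\mu}=C(0)$, the last equality a direct computation from \eqref{cgamma}--\eqref{eq:z} (using $z_0=\sqrt{\lambda/\mu}$). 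Fix any $t_1\in(0,\infty)$ and, for $T>t_1$, set $\zeta\equiv0$ and $\xi(t)\doteq x_0\,(1-t/t_1)^+$ on $[0,T]$. One checks directly from the definition of $\Cmc_T$ at the start of Section~\ref{sec:proprf} that $(\xi,\zeta)\in\Cmc_T$ and $\zeta(T)=0$, so $(\xi,\zeta)\in\Jmc(0)$. By Lemma~\ref{lem:rewrite},
\[
I_{0,T}^* \le I_T(\xi,\zeta) = \int_0^{t_1} L\bigl(\xi(s),-x_0/t_1,0\bigr)\,ds + (T-t_1)\,L(0,0,0),
\]
and since $0\le\xi(s)\le x_0$ on $[0,t_1]$ the first integral is bounded by a constant $c=c(x_0,t_1,\lambda,\mu,\theta)$ not depending on $T$; hence $I_{0,T}^*\le C(0)T+c$, and dividing by $T$ and letting $T\to\infty$ gives $\limsup_{T\to\infty} I_{0,T}^*/T\le C(0)$.

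Both cases are ultimately feasibility checks, so I do not anticipate a serious obstacle. The one point needing mild care is the $\gamma=0$ case, where the machinery of Section~\ref{sec:proprf} and the present section (all developed under the standing assumption $\gamma>0$) does not apply, so one must verify by hand that the ramp-then-rest path lies in $\Cmc_T$, has terminal value $0$, and has $O(1)$ cost on the initial ramp and cost density exactly $C(0)$ thereafter.
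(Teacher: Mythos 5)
Your proof is correct, and both halves are feasibility arguments of the same kind as the paper's: exhibit an element of the constraint set whose cost is $C(\gamma)T+o(T)$ and invoke the definition of $I^*_{\gamma,T}$ as an infimum. The only substantive difference is in the case $\gamma>0$: the paper does not reuse the candidate minimizer of Construction \ref{constr:minimizer}, but instead takes the trajectory \eqref{eq:equation_xi}--\eqref{eq:equation_zeta} with $A=0$ and $B$ chosen explicitly so that $\zeta(T)=\gamma T$, verifies feasibility by hand ($B>\sqrt{\mu/\lambda}$ for large $T$), and computes the cost from \eqref{eq:cost}. Your route, which quotes Lemmas \ref{lem:minimizer-property}(a), \ref{lem:termcond} and \ref{lem:minimizer-cost} for the Construction \ref{constr:minimizer} trajectory, is equally valid and shorter on the page, but it leans on the existence and asymptotics of the solution $A$ of the nonlinear equation \eqref{eq:A-def} from Lemma \ref{lem:minimizer-existence}, whereas the paper's $A=0$ trajectory is fully explicit and keeps the upper bound independent of that machinery; there is no circularity either way, since none of those lemmas depend on the proposition. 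For $\gamma=0$ your ramp-then-rest path is essentially the paper's construction (the paper ramps down at slope $-1$ rather than $-x_0/t_1$), and your verifications that the path lies in $\Cmc_T$, that $L(0,0,0)=\lambda+\mu-2\sqrt{\lambda\mu}=C(0)$, and that the ramp contributes only an $O(1)$ cost are all correct.
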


\begin{proof}
	First consider the case that $\gamma>0$.
	For fixed $T$, we make use of the trajectory $(\xi,\zeta)$ given by \eqref{eq:equation_xi} and \eqref{eq:equation_zeta}, with
	$A=0$ and $B$ chosen such that $\zeta(T)= \gamma T$.
	This means that
	\begin{align*}
		B  &= \frac{\theta\left[\gamma T + (e^{-\theta T}-1) x_0\right] + \sqrt{\theta^2\left[\gamma T + (e^{-\theta T}-1) x_0\right]^2 + 4\lambda\mu \left[ \theta T - 1 + e^{-\theta T} \right]^2}}{2\lambda \left[ \theta T - 1 + e^{-\theta T} \right]}, \\
		\xi(t)  &= e^{-\theta t} x_0 + \frac{\lambda B}{\theta} (1-e^{-\theta t}) - \frac{\mu}{\theta B} (1-e^{-\theta t}), \; 0 \le t \le T, \\
		\zeta(t) &= \frac{\lambda B}{\theta} \left[ \theta t - 1 + e^{-\theta t} \right] + \frac{\mu}{\theta B} \left[ -\theta t - e^{-\theta t} + 1 \right] + (1-e^{-\theta t}) x_0.
	\end{align*}
	Since $\gamma>0$, we have
	\begin{equation*}
		\lim_{T \to \infty} B  = \frac{\gamma+\sqrt{\gamma^2 + 4\lambda\mu}}{2\lambda} = z_\gamma^{-1} > \sqrt{\frac{\mu}{\lambda}}.
	\end{equation*}
	This implies $B > \sqrt{\frac{\mu}{\lambda}}$, $\xi(t) \ge 0$, and $(\xi,\zeta) \in \Cmc_T$ for sufficiently large $T$, and
	\begin{align*}
		\lim_{T \to \infty}	\xi(T) = \frac{\lambda}{z_{\gamma}\theta} - \frac{z_\gamma\mu}{\theta}.
	\end{align*}
	We note that since  \eqref{eq:cost} is satisfied for any $\varphi$ satisfying \eqref{eq:varphi_family} and the corresponding $(\bar \xi, \bar \zeta)$ satisfying \eqref{eq:equation1}--\eqref{eq:equation2}, this equality holds with
	 $(\bar \xi, \bar \zeta)$ replaced by $(\xi, \zeta)$. 
	 Using this and since $\zeta(T)=\gamma T$, $A=0$,
	\begin{align*}
		\frac{I_{\gamma, T}^*}{T} & \le \frac{\xi(T)+\gamma T-x_0}{T} \log B - \frac{\xi(T)}{T} \log \frac{1}{1-Ae^{\theta T}} + \frac{x_0}{T} \log \frac{1}{1-A} \\
		& \quad + \lambda + \mu - \frac{\lambda B}{\theta T} [\theta T - A(e^{\theta T}-1)] + \frac{\mu}{B\theta T} \log \frac{e^{-\theta T}-A}{1-A}\\
		&= \frac{\xi(T)+\gamma T-x_0}{T} \log B  
		 + \lambda + \mu - \lambda B - \frac{\mu}{ B}
	\end{align*}
	for sufficiently large $T$,
	and hence
	\begin{equation*}
		\limsup_{T \to \infty} \frac{I_{\gamma, T}^*}{T} \le \gamma \log z_\gamma^{-1}  + \lambda + \mu - \lambda z_\gamma^{-1} - \mu z_\gamma = C(\gamma).
	\end{equation*}
	
	Next consider the case that $\gamma=0$. Let $z_{-1} = \frac{2\lambda}{\sqrt{4\lambda \mu +1}-1}$.
	Take 
	\begin{align*}
		& \xi(t)=x_0-t, \: \zeta(t)=0, \: \varphi_1(t)=z_{-1}^{-1}, \: \varphi_2(t)=z_{-1}, \: \varphi_3(t)=0, \quad t \in [0,x_0), \\
		& \xi(t)=0, \: \zeta(t)=0, \: \varphi_1(t)=z_0^{-1}, \: \varphi_2(t)=z_0, \: \varphi_3(t)=1, \quad t \in [x_0,T].
	\end{align*}
	Clearly $(\xi,\zeta) \in \Cmc_T$.
	Using \eqref{eq:z-equation} we see that $\lambda \varphi_1(t) - \mu \varphi_2(t) = -1 = \xi'(t)$ for $t \in [0,x_0)$ and $\lambda \varphi_1(t) - \mu \varphi_2(t) = 0 = \xi'(t)$ for $t \in [x_0,T]$.
	Therefore $(\varphi_1,\varphi_2,\varphi_3) \in \Umc(\xi,\zeta)$ and
	\begin{align*}
		\limsup_{T \to \infty} \frac{I_{0, T}^*}{T} & \le \limsup_{T \to \infty} \frac{\lambda \int_0^T \ell(\varphi_1(t))\,dt + \mu\int_0^T \ell(\varphi_2(t))\,dt + \theta\int_0^T \xi(t)\ell(\varphi_3(t))\,dt}{T} \\
		& = \lambda \ell(z_0^{-1}) + \mu \ell(z_0) = \lambda(1-z_0^{-1}) + \mu(1-z_0) = C(0).
	\end{align*}	
	This completes the proof.
\end{proof}

The following proposition gives the reverse inequality for limit inferior
when $\gamma > 0$.

\begin{proposition}
\label{prop:I*liminf_gamma_positive} Fix $\gamma > 0$. Then 
\begin{equation*}
\liminf_{T \to \infty} \frac{I_{\gamma, T}^*}{T} \ge C(\gamma).
\end{equation*}
\end{proposition}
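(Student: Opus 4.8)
The plan is to bound $I^*_{\gamma,T}$ from below by the cost of the explicit candidate minimizer $(\bar\xi,\bar\zeta)$ from Construction \ref{constr:minimizer}, at least when the initial condition is positive, and then remove the restriction $x_0>0$ by a short perturbation argument. First I would observe that by Lemma \ref{lem:minimizer-proof}, for $x_0>0$ (fixed, or in a compact $K\setminus\{0\}$) and all $T$ large enough we have the exact identity $I^*_{\gamma,T}=I_T(\bar\xi,\bar\zeta)$, and then Lemma \ref{lem:minimizer-cost} gives $\lim_{T\to\infty} I_T(\bar\xi,\bar\zeta)/T = C(\gamma)$. Hence for $x_0>0$ the conclusion $\liminf_{T\to\infty} I^*_{\gamma,T}/T \ge C(\gamma)$ (in fact equality) is immediate from those two lemmas.

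The remaining work is the case $x_0=0$ (and, more generally, making sure the bound is not lost when $x_0$ is merely nonnegative). Here I would use monotonicity/continuity in the initial condition. Fix $\eta>0$ small and compare the system started at $x_0=0$ with one started at $x_0=\eta$. Given any trajectory $(\xi,\zeta)\in{\mathcal{J}}(0,\gamma,T)$, one can build a trajectory $(\xi^\eta,\zeta^\eta)\in{\mathcal{J}}(\eta,\gamma',T)$ for a slightly modified $\gamma'=\gamma'(\eta,T)$, for instance by letting the extra mass $\eta$ drain off quickly at the start (using an auxiliary service burst over a time interval of length $O(\eta)$) and then following $(\xi,\zeta)$; the extra cost incurred is $O(\eta)$ uniformly in $T$, and $\gamma' \to \gamma$ as $\eta\to 0$ (uniformly for $T$ large, since the correction to $\zeta(T)/T$ is $O(\eta/T)$). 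Passing through the already-established identity for $x_0=\eta>0$ and then using continuity of $\gamma\mapsto C(\gamma)$ (noted just before Proposition \ref{prop:I*limsup}), one gets $\liminf_{T\to\infty} I^*_{\gamma,T}/T \ge C(\gamma)-O(\eta)$, and letting $\eta\downarrow 0$ finishes the proof. Alternatively, and perhaps more cleanly, one can appeal directly to the uniformity over $x_0$ in compacts already built into Lemmas \ref{lem:minimizer-existence}(b), \ref{lem:minimizer-cost}, and \ref{lem:minimizer-proof}: those give $I^*_{\gamma,T}(x_0)/T\to C(\gamma)$ uniformly for $x_0$ in $K\setminus\{0\}$, so it suffices to control the dependence of $I^*_{\gamma,T}(0)$ on nearby positive initial points, which is exactly the $O(\eta)$ perturbation estimate above.

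I expect the main obstacle to be the $x_0=0$ boundary case: the candidate-minimizer machinery of Section 4 is developed under the standing assumption $\gamma>0$ but with the verification of the minimizer property (Lemma \ref{lem:minimizer-proof}) requiring $x_0>0$, because the argument there needs $\bar\xi$ bounded away from $0$ on all of $[0,T]$ (Lemma \ref{lem:minimizer-property}(b)), which fails at $t=0$ when $x_0=0$. So the real content of this proposition beyond Lemmas \ref{lem:minimizer-cost} and \ref{lem:minimizer-proof} is the continuity-in-$x_0$ estimate. The cost bookkeeping in the perturbation step is routine (it is the same kind of $\ell$-cost accounting used in Proposition \ref{prop:I*limsup} for the $\gamma=0$ construction), so no serious difficulty is anticipated there; the only care needed is to keep all error terms $o(T)$ — equivalently $O(1)$ or $O(\eta)\cdot T$ — and to check that the modified terminal value $\gamma'(\eta,T)$ indeed converges to $\gamma$ before letting $T\to\infty$.
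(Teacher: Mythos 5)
Your reduction to the case $x_0>0$ matches the paper's first step exactly: for $x_0>0$, Lemmas \ref{lem:minimizer-proof} and \ref{lem:minimizer-cost} give $I^*_{\gamma,T}/T\to C(\gamma)$, and you have correctly located the real difficulty at $x_0=0$. The gap is in your resolution of that case: the claim that any $(\xi,\zeta)\in\mathcal{J}(0,\gamma,T)$ can be turned into a trajectory started from $\eta>0$ at an extra cost that is $O(\eta)$ \emph{uniformly in $T$ and uniformly over near-optimal trajectories} is asserted as routine, but it is neither. Draining the extra mass $\eta$ over an initial interval of length $c\eta$ requires adding an order-one drift to the service control, i.e.\ replacing $\varphi_2$ by $\varphi_2+a$ on $[0,c\eta]$; the extra cost $\mu\int_0^{c\eta}[\ell(\varphi_2(s)+a)-\ell(\varphi_2(s))]\,ds$ is controlled, via $\ell(x+a)-\ell(x)\le a\log(x+a)$, only by $\int_0^{c\eta}\ell(\varphi_2(s))\,ds$ up to an additive $O(\eta)$. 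For a merely $\delta$-optimal trajectory from $x_0=0$ the only a priori bound on that integral is the total cost, which is of order $T$: nothing in the hypotheses prevents a near-minimizer from concentrating order-$T$ cost, or unbounded controls, on an arbitrarily short initial interval. Excluding this would require a lower bound on the cost of the restriction of $(\xi,\zeta)$ to $[c\eta,T]$ from the possibly zero state $\xi(c\eta)$ --- essentially the statement being proved. The alternative of running the original controls from the shifted initial point fares no better, since the change in the reneging cost is bounded only by $\eta\int_0^T\ell(\varphi_3(s))\,ds$, which is likewise uncontrolled.

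The paper sidesteps this with convexity rather than a perturbation of the initial condition: it mixes a $\delta$-optimal $(\xi^T,\zeta^T)\in\mathcal{J}(0,\gamma,T)$ with the candidate minimizer started at $0$, forming $(\tilde\xi^T,\tilde\zeta^T)=\delta(\bar\xi,\bar\zeta)+(1-\delta)(\xi^T,\zeta^T)$. Lemma \ref{lem:convexity} bounds the mixture's cost by $\delta I_T(\bar\xi,\bar\zeta)+I_T(\xi^T,\zeta^T)$, and Lemma \ref{lem:minimizer-property}(a) forces $\tilde\xi^T(\sigma_T)>0$ at the bounded time $\sigma_T=\inf\{t:\tilde\xi^T(t)\ge1\mbox{ or }\tilde\zeta^T(t)\ge1\}\wedge1$; restricting the cost to $[\sigma_T,T]$ puts one back in the positive-initial-condition case with initial state in $(0,1]$, where Lemmas \ref{lem:minimizer-proof} and \ref{lem:minimizer-cost} apply uniformly over $K=[0,1]$ (after adjusting the target to $\gamma-i\delta$ via Lemma \ref{lem:IT_convex}), and $\delta\to0$ with continuity of $C$ finishes. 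Unless you supply the missing uniform control of near-minimizers on short initial intervals, your argument for $x_0=0$ is circular, so the proposal as written has a genuine gap there.
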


\begin{proof}
	First consider $x_0 > 0$.
	Then it is immediate from Lemmas  \ref{lem:minimizer-cost} and \ref{lem:minimizer-proof} that 
	$$\lim_{T \to \infty} \frac{I_{\gamma, T}^*}{T} = C(\gamma).$$
	
	Next consider $x_0=0$. There are two cases for $\gamma$:
	
	Case 1: $\gamma = \lambda-\mu$. In this case $C(\gamma)=0$ and the result clearly holds.
	
	Case 2: $\gamma \ne \lambda-\mu$. 
	Let $$\Jmc(x,\gamma,T) \doteq\{ (\xi,\zeta) \in \Cmc_T : \xi(0)=x, \zeta(T) = \gamma T\}.$$
	Fix $\delta \in (0,|\gamma-(\lambda-\mu)|\wedge 1)$ and
	 $T \in (0,\infty)$. 
	There exists $(\xi^T,\zeta^T) \in \Jmc(0,\gamma,T)$ such that $I_{\gamma, T}^* + \delta \ge I_T(\xi^T,\zeta^T)$.	
	Let $(\xibar,\zetabar)$ be as in Construction \ref{constr:minimizer} with $x_0=0$.
	By Lemma \ref{lem:minimizer-property}(a) and Lemma \ref{lem:termcond} there exists $T_1 \in (0,\infty)$ such that $(\xibar,\zetabar) \in \Jmc(0,\gamma,T)$ for $T \ge T_1$.
	Also note that, by Lemma \ref{lem:minimizer-cost},
	\begin{equation*}
		\lim_{T \to \infty} \frac{I_T(\xibar,\zetabar)}{T} = C(\gamma).
	\end{equation*}
	Consider $\Jmc(0,\gamma,T) \ni (\xitil^T,\zetatil^T) \doteq \delta (\xibar,\zetabar) + (1-\delta) (\xi^T,\zeta^T)$.
	By Lemma \ref{lem:convexity}, we have
	\begin{align*}
		I_T(\xitil^T,\zetatil^T) \le \delta I_T(\xibar,\zetabar) + (1-\delta) I_T(\xi^T,\zeta^T) \le \delta I_T(\xibar,\zetabar) + I_T(\xi^T,\zeta^T)
	\end{align*}
	and hence
	\begin{equation}
		\label{eq:I*liminf_1}
		\liminf_{T \to \infty} \frac{I_{\gamma, T}^*}{T} \ge \liminf_{T \to \infty} \frac{I_T(\xi^T,\zeta^T)}{T} \ge \liminf_{T \to \infty} \frac{I_T(\xitil^T,\zetatil^T)}{T} - \delta C(\gamma).
	\end{equation}
	Now let $$\sigma_T \doteq \inf\{t: \xitil^T(t) \ge 1 \mbox{ or } \zetatil^T(t) \ge 1 \} \wedge 1.$$
	By restricting the attention to the cost over $[\sigma_T,T]$ we have
	\begin{equation*}
		I_T(\xitil^T,\zetatil^T) \ge \inf_{(\xi,\zeta) \in \Jmc(\xitil^T(\sigma_T),\frac{\gamma T - \zetatil^T(\sigma_T)}{T-\sigma_T},T-\sigma_T)} I_{T-\sigma_T}(\xi,\zeta).
	\end{equation*}
	Let $i=1$ if $\gamma > \lambda-\mu$ and $i=-1$ if $\gamma < \lambda-\mu$.
	Noting that $\delta \in (0,|\gamma-(\lambda-\mu)|\wedge 1)$ and $\xitil^T(\sigma_T), \zetatil^T(\sigma_T), \sigma_T \in [0,1]$, using the last statement in Lemma \ref{lem:IT_convex} we can find $T_2 \in(T_1, \infty)$ such that for all $T \ge T_2$,
	\begin{equation*}
		\inf_{(\xi,\zeta) \in \Jmc(\xitil^T(\sigma_T),\frac{\gamma T - \zetatil^T(\sigma_T)}{T-\sigma_T},T-\sigma_T)} I_{T-\sigma_T}(\xi,\zeta) \ge \inf_{(\xi,\zeta) \in \Jmc(\xitil^T(\sigma_T),\gamma-i\delta,T-\sigma_T)} I_{T-\sigma_T}(\xi,\zeta).
	\end{equation*}
	Let $(\xibar^T,\zetabar^T) \in \Jmc(\xitil^T(\sigma_T),\gamma-i\delta,T-\sigma_T)$ be given by Construction \ref{constr:minimizer}.
	Noting that $\sigma_T>0$, from Lemma \ref{lem:minimizer-property}(a) we have $\xibar(\sigma_T) > 0$ and hence $0 < \xitil^T(\sigma_T) \le 1$.
	It then follows from Lemma \ref{lem:minimizer-proof} that
	\begin{equation*}
		\inf_{(\xi,\zeta) \in \Jmc(\xitil^T(\sigma_T),\gamma-i\delta,T-\sigma_T)} I_{T-\sigma_T}(\xi,\zeta) = I_{T-\sigma_T}(\xibar^T,\zetabar^T)
	\end{equation*}
	for all $T \ge T_3$ for some $T_3 \in (T_2,\infty)$.
	From Lemma \ref{lem:minimizer-cost} (applied with $K=[0,1]$)
	\begin{equation*}
		\lim_{T \to \infty} \frac{I_{T-\sigma_T}(\xibar^T,\zetabar^T)}{T-\sigma_T} = C(\gamma-i\delta).
	\end{equation*}
	Combining the last four displays we have
	\begin{equation*}
		\liminf_{T \to \infty} \frac{I_T(\xitil^T,\zetatil^T)}{T} \ge \liminf_{T \to \infty} \frac{I_{T-\sigma_T}(\xibar^T,\zetabar^T)}{T} = C(\gamma-i\delta).
	\end{equation*}
	Combining this with \eqref{eq:I*liminf_1}, taking $\delta \to 0$ and using the continuity of $C(\gamma)$ completes the proof.
\end{proof}

The following proposition gives the analogue of Proposition \ref%
{prop:I*liminf_gamma_positive} when $\gamma = 0$.

\begin{proposition}
\label{prop:I*liminf_gamma_zero} 
\begin{equation*}
\liminf_{T \to \infty} \frac{I_{0,T}^*}{T} \ge C(0).
\end{equation*}
\end{proposition}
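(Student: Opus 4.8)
The plan is to prove the bound by a verification argument built on a bounded auxiliary function, which is what lets one control the penalty term $\theta\xi(s)\mathbf{1}_{\{\xi(s)>0\}}$ appearing in the cost when the queue is driven away from $0$.

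First I would reduce to a one–dimensional variational problem. If $(\xi,\zeta)\in{\mathcal J}(x_0,0,T)$ then $\zeta$ is nondecreasing with $\zeta(0)=\zeta(T)=0$, hence $\zeta\equiv0$ and $\zeta'\equiv0$; by Lemma \ref{lem:rewrite}, $I_T(\xi,0)=\int_0^T L(\xi(s),\xi'(s),0)\,ds$. From the computation in the proof of Lemma \ref{lem:convexity} one has, for $x\ge0$ and $p\in{\mathbb R}$, $L(x,p,0)=h(p)+\theta x\mathbf{1}_{\{x>0\}}$, where $h(p)\doteq f_1(p)+\lambda+\mu$ and $f_1(p)\doteq p\log\frac{\sqrt{p^2+4\lambda\mu}+p}{2\lambda}-\sqrt{p^2+4\lambda\mu}$ (here $\ell(0)=1$ is used for the third term). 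The function $h$ is convex (as $f_1''(p)=(p^2+4\lambda\mu)^{-1/2}>0$), satisfies $h\ge0$ (since $\ell\ge0$), $h(0)=\lambda+\mu-2\sqrt{\lambda\mu}=C(0)$ (using \eqref{cgamma} and \eqref{eq:z} with $z_0=\sqrt{\lambda/\mu}$), and $h'(0)=\log\sqrt{\mu/\lambda}=-c$ where $c\doteq\tfrac12\log(\lambda/\mu)\ge0$. Convexity then yields the tangent bound $h(p)\ge C(0)-cp$ for all $p$. If $\lambda=\mu$ then $C(0)=0$ and the claimed inequality is trivial, so I assume $\lambda>\mu$, i.e. $c>0$.

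Next I would introduce the bounded, Lipschitz test function $W(x)\doteq c\,(x\wedge x^*)$ on ${\mathbb R}_+$ with $x^*\doteq C(0)/\theta>0$, so that $0\le W\le c\,C(0)/\theta$, $W'(x)=c$ for $x<x^*$ and $W'(x)=0$ for $x>x^*$. The key claim is that $L(x,p,0)+W'(x)\,p\ge C(0)$ for a.e. $x\ge0$ and all $p\in{\mathbb R}$: for $x<x^*$, $L(x,p,0)+cp=h(p)+cp+\theta x\mathbf{1}_{\{x>0\}}\ge h(p)+cp\ge C(0)$ by the tangent bound; for $x>x^*$, $L(x,p,0)=h(p)+\theta x\ge0+\theta x^*=C(0)$. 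Then, for any $(\xi,\zeta)\in{\mathcal J}(x_0,0,T)$, using that $W\circ\xi$ is absolutely continuous with $(W\circ\xi)'(s)=W'(\xi(s))\xi'(s)$ a.e.,
$$
I_T(\xi,0)=\int_0^T L(\xi(s),\xi'(s),0)\,ds\ \ge\ \int_0^T\big(C(0)-W'(\xi(s))\xi'(s)\big)\,ds=C(0)T-W(\xi(T))+W(x_0)\ \ge\ C(0)T-\frac{c\,C(0)}{\theta}.
$$
Taking the infimum over $(\xi,\zeta)\in{\mathcal J}(x_0,0,T)$ gives $I_{0,T}^*\ge C(0)T-c\,C(0)/\theta$, and dividing by $T$ and sending $T\to\infty$ proves $\liminf_{T\to\infty}I_{0,T}^*/T\ge C(0)$.

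The main obstacle is exactly the term $\theta\xi\mathbf{1}_{\{\xi>0\}}$: since $h$ can be strictly below $C(0)=h(0)$ for $p$ in an interval of positive reals, a trajectory that lets the queue grow makes $\int_0^T h(\xi'(s))\,ds$ small, so one cannot lower bound that integral alone by $C(0)T-o(T)$; the bound must charge the growth of $\xi$ against the queue–size penalty, which is precisely what the bounded function $W$ encodes (a weak–KAM/verification argument with ergodic constant $C(0)=\inf_{x\ge0}L(x,0,0)$). A minor technical point is the chain rule for $W\circ\xi$ at the single point $x=x^*$ where $W$ fails to be differentiable; this is harmless, since $\xi'=0$ a.e. on $\{s:\xi(s)=x^*\}$ (and there $L(x^*,0,0)=2C(0)\ge C(0)$), or one may simply replace $W$ by a $C^1$ rounding at negligible cost.
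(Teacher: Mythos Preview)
Your proof is correct, and it takes a genuinely different route from the paper's.

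The paper's argument is a two--line soft reduction: by the monotonicity in Lemma~\ref{lem:IT_convex}, $I_{0,T}^*\ge I_{\varepsilon,T}^*$ for every $\varepsilon\in(0,\lambda-\mu)$, and Proposition~\ref{prop:I*liminf_gamma_positive} then gives $\liminf_T I_{\varepsilon,T}^*/T\ge C(\varepsilon)$; sending $\varepsilon\downarrow 0$ and using continuity of $C$ yields the claim. This is short precisely because it recycles the Euler--Lagrange minimizer construction and the delicate verification in Lemma~\ref{lem:minimizer-proof} that were already built for the case $\gamma>0$.

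Your approach, by contrast, is entirely self--contained: the observation $\zeta\equiv 0$ collapses the problem to a single state variable, and the running cost $L(x,p,0)=h(p)+\theta x\mathbf{1}_{\{x>0\}}$ admits a pointwise lower bound $L(x,p,0)+W'(x)p\ge C(0)$ for a \emph{bounded} Lipschitz $W$, so that integrating gives $I_T(\xi,0)\ge C(0)T-O(1)$ with no appeal to the candidate minimizer, the transversality condition, or Proposition~\ref{prop:I*liminf_gamma_positive}. The trade--off is clear: the paper's proof is shorter given the machinery already in place, while yours is more elementary and in fact yields a sharper finite--$T$ bound (an $O(1)$ error rather than merely an asymptotic statement). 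Your method is specific to $\gamma=0$, since it is exactly the collapse $\zeta\equiv 0$ that makes a bounded verification function available; for $\gamma>0$ the reneging component cannot be eliminated and one is pushed back to the two--dimensional Euler--Lagrange analysis.
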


\begin{proof}
	It follows from Lemma \ref{lem:IT_convex} and Proposition \ref{prop:I*liminf_gamma_positive} that
	\begin{equation*}
		\liminf_{T \to \infty} \frac{I_{0,T}^*}{T} \ge \liminf_{T \to \infty} \frac{I_{\varepsilon,T}^*}{T} \ge C(\varepsilon)
	\end{equation*}
	for each $\varepsilon \in (0,\lambda-\mu)$.
	The result follows on sending $\varepsilon \to 0$.
\end{proof}

We can now complete the proof of Theorem \ref{thm:I*}.

\textbf{Proof of Theorem \ref{thm:I*}.} The first statement
follows from Propositions \ref{prop:I*limsup}, \ref%
{prop:I*liminf_gamma_positive}, \ref{prop:I*liminf_gamma_zero}. For the
second and third statements, note that by Lemma \ref{lem:IT_convex} the
following holds for sufficiently large $T$: 
\begin{align*}
I_{\gamma,T} & = \inf \{ I_T(\xi,\zeta) : \zeta(T) \ge \gamma T \} = \inf_{{%
\tilde{\gamma}} \ge \gamma} I_{{\tilde{\gamma}},T^*} = I_{\gamma, T}^*,
\quad \gamma > \lambda - \mu, \\
{\tilde{I}}_{\gamma,T} & \doteq \inf \{ I_T(\xi,\zeta) : \zeta(T) \le \gamma
T \} = \inf_{{\tilde{\gamma}} \le \gamma} I_{{\tilde{\gamma}},T}^* =
I_{\gamma, T}^*, \quad 0 \le \gamma < \lambda-\mu.
\end{align*}
This proves these two statements when $\gamma \neq \lambda-\mu$. The result
for the case when $\gamma = \lambda-\mu$ is immediate on observing that 
\begin{equation*}
0 \le I_{\gamma,T} \le I_{\gamma, T}^*, \quad 0 \le {\tilde{I}}_{\gamma,T}
\le I_{\gamma, T}^*, \quad \lim_{T \to \infty} \frac{I_{\gamma, T}^*}{T} = 0
= C(\gamma).
\end{equation*}
\hfill \qed
\begin{remark}
	{We now provide a heuristic interpretation of the minimizers $(\bar\xi,\bar\zeta)$ given in Construction
	\ref{constr:minimizer}. First observe that in the case there is no reneging (i.e. $\theta=0$), the law of large numbers
	limit of the state process $X^n$ is given as the solution of the ordinary differential equation (ODE):
	$$\dot \xi_0(t) = (\lambda - \mu), \; \xi_0(0)=x_0.$$
	Thus the limit is simply given as the trajectory with initial value the scaled limit of of initial queue length and velocity given as the difference between the inflow rate and the outflow rate. In the case where the reneging occurs at rate $\theta$, the
	LLN limit of $X^n$ is given in \eqref{eq:xi_LLN} which can be rewritten as
	$$\dot \xi_0(t) = - \theta \xi_0(t) + (\lambda - \mu), \; \xi_0(0)=x_0.$$
	Thus the only difference is that the velocity is decreased by a factor of $\theta \xi_0(t)$ to account for rate $\theta$ reneging.
	Now fix $\gamma >0$. Then the minimizing trajectories over the time interval $[0,T]$  associated with  an asymptotic reneging 
	rate of $\gamma$ are given as in  Construction
	\ref{constr:minimizer}. Recall from  Lemma \ref{lem:minimizer-existence}(b) that the parameters $A,B$ (which depend on $T$) associated with these trajectories satisfy $A\to 0$, $B \to z_\gamma^{-1}$ and 
	$1-Ae^{\theta T} \to   z_\gamma$. The trajectory $\xi^*$ obtained by replacing in the definition of $\bar \xi$  these asymptotic values of $A$ and $B$ is given as the solution of the ODE
	$$\dot \xi^*(t) = - \theta \xi^*(t) + (\lambda z_\gamma^{-1} - \mu z_\gamma), \; \xi^*(0)=x_0.$$
Thus, formally speaking, in order for the queue to experience a long-term reneging rate of $\gamma$, the arrival rates and service rates behave atypically according to $\lambda z_\gamma^{-1}$ [resp.\ $\mu z_\gamma$] instead of
their nominal values $\lambda $ [resp.\ $\mu$]. Recall that when $\gamma= \lambda-\mu$, $z_{\gamma}=1$ in which case $\xi^*$ corresponds to the LLN limit.  }
\end{remark}
\section{Multi-Server Queue.}

\label{sec:multiser} The techniques developed in this paper for the analysis
of the $M/M/1+M$ model extend to the multiserver setting, namely to the $%
M/M/n+M$ model. In this section we outline this extension without proofs.
{
As in the case with the single server model, the
arrival rate considered is scaled up by $n$,
whereas the individual reneging rate remains fixed. However,
the service station now consists of $n$ exponential servers
each serving at a fixed rate
(this is often referred to in the literature as the many-server
scaling; see \cite{atakasshi},  \cite{kanram10}).
Again let $U^n$ and $V^n$ denote the number in system
process and the cumulative reneging count, respectively.
Then the queue length process is now given by $(U^n-n)^+$, and
the number of customers in service by $U^n\w n$.
As before, the rescaled versions are denoted by
$X^n=U^n/n$ and $Y^n=V^n/n$.
}

As with the $M/M/1+M$ setting it is convenient to introduce certain PRM.
Replace $N_2$ in Section \ref{sec:setting} by a PRM on $[0,T]\times {\mathbb{%
R}}_+^2$ with intensity $\mu ds \times dy \times dz$ and denote it once more
by $N_2$. Let ${\mathbb{X}}_2 \doteq {\mathbb{R}}_+^2$. Let $\bar{\mathcal{A}%
}_i$ for $i=1,3$ be as before and $\bar{\mathcal{A}}_2= \bar{\mathcal{A}}_3$%
. For $\varphi \in \bar {\mathcal{A}}_i$, $i=1,3$, $N^{\varphi}_i$ are
defined as before and let $N^{\varphi}_2$ be defined using $N_2$ and $%
\varphi $ as $N^{\varphi}_3$ is defined using $N_3$ and $\varphi$.
The processes $(X^n, Y^n)$
are given as follows. 
\begin{align*}
X^n(t) &= x_n + \frac{1}{n} \int_{[0,t]} N_1^n(ds) - \frac{1}{n}
\int_{[0,t]\times {\mathbb{R}}_+} {\boldsymbol{1}}_{[0, X^n(s-)\wedge
1]}(y)\, N_2^n(ds\, dy) \\
& \quad - \frac{1}{n} \int_{[0,t]\times {\mathbb{R}}_+} {\boldsymbol{1}}%
_{[0, \left(X^n(s-) - 1\right)^+]}(y) \,N_3^n(ds\,dy). \\
Y^n(t) &= \frac{1}{n} \int_{[0,t]\times {\mathbb{R}}_+} {\boldsymbol{1}}%
_{[0, \left(X^n(s-) - 1\right)^+]}(y) \,N_3^n(ds\,dy).
\end{align*}
{
On the RHS of the first equation, the three integrals correspond
to the arrival, departure and reneging processes, respectively,
normalized by $n$. Thinning with indicator of $[0,X^n(s-)\w1]$
corresponds to the fact that service rate is proportional to the number
of customers in service; indeed, normalizing $U^n\w n$ by $n$
gives $X^n\w1$.
Similarly, the expression in the third integral accounts for the fact
that the total reneging rate is proportional
to the queue length, $(U^n-n)^+$, that after normalization is given by
$(X^n-1)^+$.
}

The LLN of $(X^n, Y^n)$ is governed by the following equations 
\begin{equation}  \label{eq:fluidlimitmsq}
\begin{aligned} x(t) &= x_0 + \lambda t -\mu \int_0^{t} (x(s)\wedge 1) ds -
\theta \int_0^{t} (x(s)-1)^+ \,ds, \; t \in [0,T], \\ y(t) &= \theta
\int_0^t (x(s)-1)^+ \,ds, \; t \in [0,T]. \end{aligned}
\end{equation}

We now introduce the rate function associated with LDP for $(X^n, Y^n)$. For 
$(\xi, \zeta) \in \mathcal{C}([0,T]: {\mathbb{R}}_+^2)$, let $\mathcal{U}%
(\xi, \zeta)$ be the collection of all $\varphi = (\varphi_1, \varphi_2,
\varphi_3)$ such that $\varphi_i: [0,T] \to {\mathbb{R}}_+$, $i=1,2,3$ are
measurable maps and the following equations are satisfied for all $t \in
[0,T]$: 
\begin{equation}  \label{eq:eqxizetmsq}
\begin{aligned} \xi(t) &= x_0 + \lambda\int_0^{t} \varphi_1(s)\,ds - \mu
\int_0^{t} (\xi(s)\wedge 1) \varphi_2(s)\,ds - \theta \int_0^{t}
\varphi_3(s) (\xi(s) -1)^+\,ds, \\ \zeta(t) &= \theta \int_0^t
\varphi_3(s)(\xi(s) -1)^+\,ds. \end{aligned}
\end{equation}
For $(\xi, \zeta) \in \mathcal{C}([0,T]: {\mathbb{R}}_+^2)$, define 
\begin{equation}  \label{eq:I_Tmsq}
I_T(\xi, \zeta) \doteq \inf_{\varphi \in \mathcal{U}(\xi, \zeta)}\left\{
\lambda\int_0^T \ell(\varphi_1(s))\,ds + \mu\int_0^T (\xi(s)\wedge 1)
\ell(\varphi_2(s))\,ds + \theta\int_0^T (\xi(s)
-1)^+\ell(\varphi_3(s))\,ds\right\}.
\end{equation}
Set $I_T(\xi, \zeta)$ to be $\infty$ if $\mathcal{U}(\xi, \zeta)$ is empty
or $(\xi, \zeta) \in \mathcal{D}([0,T]: {\mathbb{R}}_+^2)\setminus \mathcal{C%
}([0,T]: {\mathbb{R}}_+^2)$. The following result can be established using
similar methods as for the proof of Theorem \ref{th:thm1}.

\begin{theorem}
\label{th:thm1msq} The pair $\{(X^n, Y^n)\}$ satisfies a LDP on $\mathcal{D}%
([0,T]: {\mathbb{R}}_+^2)$ with rate function $I_T$.
\end{theorem}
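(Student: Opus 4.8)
The plan is to follow, essentially line for line, the route used for Theorem \ref{th:thm1}: reduce the LDP to the equivalent Laplace principle, invoke the variational representation of Theorem \ref{thm:var_repn}, and then prove the Laplace upper and lower bounds (together with compactness of the sublevel sets of $I_T$) by a weak-convergence analysis of controlled processes. The only structural novelty is that the state-dependence now enters through the $1$-Lipschitz, bounded maps $x\mapsto x\wedge 1$ (service) and $x\mapsto (x-1)^+$ (reneging) rather than through $\one_{\{x\neq 0\}}$ and $x$, and that no Skorohod map is present, since in the $M/M/n+M$ dynamics no service jump can occur when $X^n(s-)=0$, so the queue stays nonnegative automatically. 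First I would record the representation, with $N_2$ now a PRM on $[0,T]\times{\mathbb R}_+^2$ and the second running-cost term replaced by $\mu\int_{{\mathbb R}_+}\ell(\varphi_2(s,y))\,dy$, and reduce as before to controls $\varphi^n\in\bar{\mathcal A}_b$ satisfying the obvious analogue of the uniform cost bound \eqref{eq:cost_bd_repn} (with an $\ell(\varphi_2^n(s,y))$ integral over $y$ added).

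Next I would prove the tightness/weak-convergence lemma for the controlled processes $\bar Z^n=(\bar X^n,\bar Y^n)$ solving the controlled $M/M/n+M$ equations. The $L^2$ moment bound and the Aldous--Kurtz fluctuation estimate \eqref{eq:tightness_fluctuation} go through exactly as in the proof of Lemma \ref{lem:tightness}, using Lemma \ref{lem:property_ell} and the cost bound; here one does not even need continuity of $\Gamma$, and boundedness of $\xi\wedge 1$, $(\xi-1)^+$ makes the estimates slightly easier. Identifying a subsequential limit $(\bar X,\bar Y)$ as a solution of the $(s,y)$-form of \eqref{eq:eqxizetmsq} (i.e. with $\varphi_2,\varphi_3$ allowed to depend on $(s,y)$) reduces, as in the step leading to \eqref{eq:char_pf2}, to the convergence of $\int\one_{[0,\bar X^n(s-)\wedge 1]}(y)\varphi_2^n(s,y)\,ds\,dy$ and of the corresponding reneging integral; this uses that $\{(s,y):y=\bar X(s)\wedge 1\}$ and $\{(s,y):y=(\bar X(s)-1)^+\}$ have Lebesgue measure zero. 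I would also prove the identity $I_T=\tilde I_T$ (the analogue of Lemma \ref{lem:iitil}), where $\tilde I_T$ uses $(s,y)$-dependent controls, by Jensen's inequality applied to the convex $\ell$ with the weights $\xi(s)\wedge 1$ and $(\xi(s)-1)^+$. With these in hand the Laplace upper bound follows by the argument below \eqref{eq:vepsbd} --- near-optimal controls, tightness, Fatou, and joint lower semicontinuity of the running cost in $(\varphi,\xi)$ in the sense of \cite[Lemma A.1]{BudhirajaChenDupuis2013large} (here the service weight $\xi(s)\wedge 1$ is bounded, which is the benign case) --- and the compactness of sublevel sets is proved identically.

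For the Laplace lower bound the key ingredient is the uniqueness property: for $(\xi,\zeta),(\tilde\xi,\tilde\zeta)\in\mathcal C([0,T]:{\mathbb R}_+^2)$ and a fixed control $\varphi$ of finite cost, if both pairs satisfy \eqref{eq:eqxizetmsq} then they coincide. This is where the multiserver case is genuinely simpler than Proposition \ref{prop:uniq}: subtracting the two $\xi$-equations and using that $x\mapsto x\wedge 1$ and $x\mapsto (x-1)^+$ are $1$-Lipschitz gives
\[
|\xi(t)-\tilde\xi(t)|\le\int_0^t\Big(\mu\int_{{\mathbb R}_+}\varphi_2(s,y)\,dy+\theta\int_{{\mathbb R}_+}\varphi_3(s,y)\,dy\Big)\,|\xi(s)-\tilde\xi(s)|\,ds,
\]
and the bracket is integrable by Lemma \ref{lem:property_ell}(b) together with finiteness of the cost, so Gronwall yields $\xi=\tilde\xi$ and hence $\zeta=\tilde\zeta$; there is no reflection term, so the delicate $d\eta$ estimates of Proposition \ref{prop:uniq} are not needed. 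Given uniqueness, the lower bound is completed verbatim as in Section \ref{sec:laplowbd}: pick $(\xi^*,\zeta^*)$ nearly optimal for $\inf_\phi\{I_T(\phi)+h(\phi)\}$, choose a nearly optimal control in the $(s,y)$-version of $\tilde{\mathcal U}(\xi^*,\zeta^*)$, truncate it to an element of $\bar{\mathcal A}_b$ with non-increased cost, and combine the tightness/convergence lemma with uniqueness to conclude $\bar Z^n\to(\xi^*,\zeta^*)$.

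The main obstacle is bookkeeping rather than conceptual. One must verify that the moment and fluctuation bounds still close now that the state-dependence enters both the service and the reneging term, and that the lower-semicontinuity input applies with the bounded weight $\xi(s)\wedge 1$ in place of $\theta\xi(s)$; neither presents a real difficulty, and the uniqueness step, which was the technical heart of the $M/M/1+M$ proof, collapses to the one-line Gronwall argument above. Hence the proof of Theorem \ref{th:thm1msq} is obtained by the stated modifications of the proof of Theorem \ref{th:thm1}, and the details are omitted.
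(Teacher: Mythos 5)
Your overall architecture is exactly the intended one (the paper itself only says the result ``can be established using similar methods as for the proof of Theorem \ref{th:thm1}''): Laplace principle, the representation of Theorem \ref{thm:var_repn} with $N_2$ now a PRM on $[0,T]\times{\mathbb R}_+^2$, the tightness/identification lemma, the analogue of Lemma \ref{lem:iitil} via Jensen, and the upper/lower bound arguments. Your observations that the Skorohod map is not needed (a service jump requires $y\le X^n(s-)\wedge 1$, so none occurs a.s.\ when the queue is empty) and that the weight $\xi(s)\wedge 1$ is bounded are correct and do simplify matters.

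There is, however, a genuine error in your uniqueness step. The displayed Gronwall inequality does not follow from $1$-Lipschitzness of $x\mapsto x\wedge 1$ and $x\mapsto(x-1)^+$. Writing $a=\xi(s)\wedge 1$, $b=\tilde\xi(s)\wedge 1$, the difference of the service terms is
\[
\Bigl|\int_{{\mathbb R}_+}\bigl(\one_{[0,a]}(y)-\one_{[0,b]}(y)\bigr)\varphi_2(s,y)\,dy\Bigr|=\int_{(a\wedge b,\,a\vee b]}\varphi_2(s,y)\,dy,
\]
and this is \emph{not} bounded by $|a-b|\int_{{\mathbb R}_+}\varphi_2(s,y)\,dy$: a control concentrated as a tall spike on the short interval $(a\wedge b,a\vee b]$ violates that bound while still having finite cost (and finite cost is all that is available in the lower-bound argument, where uniqueness is applied to the untruncated near-optimal $\varphi^*$, not to an element of $\bar{\mathcal A}_b$). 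Worse, the ``bracket'' $\mu\int_{{\mathbb R}_+}\varphi_2(s,y)\,dy+\theta\int_{{\mathbb R}_+}\varphi_3(s,y)\,dy$ is typically infinite, since finiteness of $\int\int\ell(\varphi_i(s,y))\,dy\,ds$ forces $\varphi_i(s,\cdot)\approx 1$ for large $y$; Lemma \ref{lem:property_ell}(b) only controls integrals over bounded $y$-sets. The correct argument is the one the paper uses in Proposition \ref{prop:uniq}, which transfers verbatim and indeed becomes shorter here because the reflection terms are absent: compute $[\xi(t)-\tilde\xi(t)]^2=2\int_0^t(\xi-\tilde\xi)(\xi'-\tilde\xi')\,ds$ and observe that, since $x\mapsto x\wedge 1$ and $x\mapsto(x-1)^+$ are nondecreasing and $\varphi_2,\varphi_3\ge 0$, the quantity $\one_{[0,\xi(s)\wedge1]}(y)-\one_{[0,\tilde\xi(s)\wedge1]}(y)$ (and likewise for the reneging indicator) has the same sign as $\xi(s)-\tilde\xi(s)$, so each contribution to the integrand is $\le 0$; hence $\xi=\tilde\xi$ and then $\zeta=\tilde\zeta$. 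With the uniqueness step repaired in this way, the rest of your proposal is sound.
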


Once more using contraction principle one obtains a LDP for $\zeta(T)$,
namely Theorem \ref{thm:contra} holds with $I^*_T$ as in \eqref{eq:istart}
and $I_T$ as in \eqref{eq:I_Tmsq}. For the analysis of calculus of
variations problem we assume for simplicity that $\lambda \ge \mu$ and that $%
x_0 \ge 1$. As 
in Lemma \ref{lem:rewrite} we can give an alternative representation for the
rate function $I_T$ in terms of a local rate function $L$. In calculating
the minimizer for $I^*_{\gamma,T}$ one considers again Euler-Lagrange
equations as in Section \ref{sec:formal}. One sees that in finding the
minimizer one can restrict attention to trajectories $\xi$ that satisfy $%
\xi(t)\ge 1$ for all $t$. In fact the minimizer for $\gamma >0$ is given by $%
(\bar \xi^*, \bar \zeta^*) = (\bar \xi_{(x_0-1)} + 1 , \bar \zeta_{(x_0-1)})$
where $(\bar \xi_{(x_0-1)} , \bar \zeta_{(x_0-1)})$ is given by Construction %
\ref{constr:minimizer} with $x_0$ replaced by $x_0-1$. Using this, one can
show that Theorem \ref{thm:I*} holds for the $M/M/n+M$ model with the same
decay rate $C(\gamma)$. We omit the details.

%
%
%

\section*{Acknowledgments.}

We thank two referees for carefully reading our paper and their helpful criticisms and suggestions.
Research of RA is supported in part by the Israel Science Foundation (grant 1184/16).
Research of AB is supported in part by the National Science Foundation (DMS-1814894 and DMS-1853968).
Research of PD is supported in part by the Air Force Office of Scientific Research AFOSR (FA-9550-18-1-0214) and the National Science Foundation (DMS-1904992).



{\footnotesize

}

\end{document}